\renewcommand{\emph}[1]{\textbf{\textit{#1}}}
\newcommand{\F}{\mathbb{F}}
\newcommand{\Z}{\mathbb{Z}}
\newcommand{\N}{\mathbb{N}}
\newcommand{\G}{\Gamma}
\DeclareMathOperator{\rank}{rank}
\DeclareMathOperator{\tr}{Tr}
\DeclareMathOperator{\chr}{char}
\DeclareMathOperator{\spn}{span}
\DeclareMathOperator{\diam}{diam}
\DeclareMathOperator{\lcm}{lcm}
\newenvironment{en}
{\begin{enumerate}}
{\end{enumerate}}
\newenvironment{eq}
{\begin{displaymath}}
{\end{displaymath}}
\newtheorem{thm}[de]{Theorem}
\newtheorem{prop}[de]{Proposition}
\newtheorem{lem}[de]{Lemma}
\newtheorem{cor}[de]{Corollary}
\title[Diameter of Finite Simple Groups]{A Diameter Bound for Finite Simple Groups of Large Rank}
\author{Arindam Biswas
\footnote[1]{Laboratoire de Math\'{e}matiques, B\^{a}timent 425, Facult\'{e} des Sciences d'Orsay, Universit\'{e} Paris-Sud XI, 91405 Orsay Cedex, France. Email: arindam.biswas@math.u-psud.fr.}, 
Yilong Yang
\footnote[2]{3170 Sawtelle Blvd Apt 203, Los Angeles, CA 90066, USA. Email: yy26@math.ucla.edu\vspace{-13pt}}}
\date{\today}
\begin{document}
\maketitle
\begin{abstract}
Given a non-abelian finite simple group $G$ of Lie type, and an arbitrary symmetric generating set $S$, it is conjectured by L\'{a}szl\'{o} Babai that its Cayley graph $\G(G,S)$ will have a diameter bound of $(\log |G|)^{O(1)}$. However, little progress has been made when the rank of $G$ is large. In this article, we shall show that if $G$ has rank $n$, and its base field has bounded size, then the diameter of $\G(G,S)$ would be bounded by $\exp(O(n(\log n)^3))$.
\end{abstract}
\section{Introduction}
\subsection{History and Background}

Given a group $G$ and a symmetric generating set $S$, one can construct a corresponding Cayley graph $\G(G,S)$. Its vertices are elements of $G$, and two vertices $g,h\in G$ are connected by an edge iff there is an element $s\in S$ such that $sg=h$. The Cayley graph is a metric space where the distance between two vertices is simply the length of the shortest path from one to the other. In this way, we can discuss the diameter of the Cayley graph. Equivalently, we can also define the diameter to be the smallest number $\ell$ such that every element of $G$ can be written as a product of at most $\ell$ elements of $S$.

If $G$ is a non-abelian finite simple group, we expect all its Cayley graphs to have good connectivity. In particular, we have the following conjecture of Babai:

\begin{conj}[Babai, \cite{BS92}]
For any non-abelian finite simple group $G$, and for any symmetric generating set, the diameter of the Cayley graph is bounded by $(\log |G|)^{O(1)}$, where the implied constant is absolute.
\end{conj}

The first class of simple groups verified for Babai's conjecture was $\mathrm{PSL}_{2}(\Z/p\Z)$ with $p$ prime, by Helfgott \cite{H08}. Afterwards, a lot of research was done on the diameters and related expansion properties of these Cayley graphs.

The best result to date are those by Pyber and Szabo \cite{PS10}, and Breuillard, Green and Tao \cite{BGT11}. They verified Babai's conjecture for all finite simple groups of Lie type with bounded rank.\footnote[3]{The preprint of PS was published on arXiv in 2010, and proved Babai's conjecture for all finite simple groups of Lie type with bounded ranks. Unlike PS, BGT first announced  these results only for  finite simple groups not belonging to the Suzuki and Ree family. Their method also applies to these cases, but this only appeared later in \cite{BGTsuz11} and \cite{BGGT15}. We thank L\'{a}szl\'{o} Pyber for this remark.}

For all non-abelian finite simple groups, Breuillard and Tointon \cite{BT15} also obtained a diameter bound of $\max(|G|^\epsilon,C_\epsilon)$ for arbitrary $\epsilon>0$ and a constant $C_\epsilon$ depending only on $\epsilon$. The diameter bounds in all these previous results depend poorly on the rank of the group. It is one of the aims of this paper to improve the dependency on the rank, in the case of finite simple groups of Lie type.

On the other hand, a lot of research was also done for the symmetric group $\mathrm{S}_n$ and the alternating group $\mathrm{A}_n$. In 1988, Babai and Seress showed the following theorem.

\begin{thm}[Babai and Seress, \cite{BS88}]
Let $G= \mathrm{S}_{n}$ or $G=\mathrm{A}_{n}$, then for any symmetric generating set, $$\diam(G)\leq \exp(\sqrt{n\log n}(1+o_n(1))=\exp(\sqrt{\log|G|}(1+o_n(1)).$$
\end{thm}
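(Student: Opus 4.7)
The plan is to proceed by strong induction on $n$, reducing a diameter question for $S_n$ or $A_n$ with generating set $S$ to the corresponding question on a symmetric/alternating group of noticeably smaller degree, with only a multiplicative overhead of $\exp(O(\sqrt{n\log n}))$ per reduction step. The guiding quantity is the Landau function $g(n)=\max\{\ord(\pi):\pi\in S_n\}$, which satisfies $\log g(n)=\sqrt{n\log n}(1+o(1))$ and matches the target bound exactly; this is no coincidence, as the argument forces $\log g(n)$ to appear as the dominant overhead.

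The first substep is to locate a permutation of \emph{small support} as a short word in $S$. By Bertrand's postulate there is a prime $p\in(n/2,n]$, and any permutation of order divisible by such a $p$ is automatically a single $p$-cycle together with $n-p<n/2$ fixed points. I would argue that among products of at most $\ell=\exp(O(\sqrt{n\log n}))$ generators, at least one element $w$ has order divisible by some such prime $p$; this is guided by the fact that the fraction of $S_n$ whose order is free of every prime in $(n/2,n]$ is $\exp(-\Omega(\sqrt{n\log n}))$. Taking the appropriate power $\tau=w^{|w|/p}$ then yields an element with at least $n-p\ge 1$ fixed points, at the cost of an extra multiplicative factor (handled by binary exponentiation) in the word length.

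The second substep amplifies $\tau$ into a $3$-cycle and then into the alternating group on a subset. Since $\langle S\rangle$ is transitive on $[n]$, short conjugations $s_1\cdots s_k\tau s_k^{-1}\cdots s_1^{-1}$ translate the support of $\tau$ across $[n]$, and the conjugating words can be taken of length $O(n)$ by a breadth-first search in the Schreier coset graph. Two $p$-cycles with overlap strictly between $0$ and $p$ admit a commutator of strictly smaller support, and iterating this contraction a logarithmic number of times produces a $3$-cycle as a word in $S$ of length $\exp(O(\sqrt{n\log n}))$. A standard construction using this $3$-cycle together with the generators builds the full alternating group on any specified subset $T\subseteq[n]$ of size $m<n$ as short words in $S$. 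An arbitrary element of $S_n$ or $A_n$ is a bounded product of elements supported on such subsets $T_i$, each reached by a word of length at most $\exp(\sqrt{m\log m}(1+o(1)))$ by the inductive hypothesis; the geometric series of overheads telescopes to the claimed bound $\exp(\sqrt{n\log n}(1+o_n(1)))$.

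The main obstacle is the first substep: certifying that some short word has order divisible by a prime in $(n/2,n]$ in spite of an adversarial choice of $S$. A naive counting/averaging argument fails because short products of $S$ need not equidistribute in $S_n$, so one must instead run an iterative escape-from-subgroups argument --- at each stage either one finds an element with the desired large-prime order, or the short products are trapped in a proper subgroup and an additional generator must be brought in to break out --- while simultaneously controlling the length blow-up caused by raising $w$ to the power $|w|/p$. This step is precisely where the exponent $\sqrt{n\log n}$ is forced on the diameter, since it matches $\log g(n)$ and cannot be improved by cosmetic changes to Steps 2 and 3.
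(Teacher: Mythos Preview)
This theorem is not proved in the paper. It appears in the introduction as a background result attributed to Babai and Seress \cite{BS88}; the paper merely cites it and then moves on to its own Main Theorem for groups of Lie type. There is therefore no proof in the paper to compare your sketch against.

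For what it is worth, your sketch also diverges from the actual Babai--Seress argument, and the divergence sits precisely at the step you yourself flag as the main obstacle. Babai and Seress do not search for a short word whose order is divisible by a prime in $(n/2,n]$; nothing guarantees such a word exists at controllable length for an adversarial generating set, and the ``escape-from-subgroups'' mechanism you allude to is not part of their method. Their approach --- and this is exactly the ``Lemma~1 of \cite{BS88}'' that the present paper names as the model for its own Lemma~\ref{Lem:pmatrix} --- is to take an \emph{arbitrary} nontrivial element $\pi$ (for instance any generator) and show, by a weighted-average argument over the primes dividing $|\pi|$, that some power $\pi^{|\pi|/p}$ already has substantially smaller support. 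The cost of passing to that power is at most $|\pi|\le g(n)=\exp((1+o(1))\sqrt{n\log n})$, which is where the headline exponent comes from. One then iterates the support reduction (interleaved with conjugations and commutators, exactly as the present paper does for matrices in Section~\ref{sec:comm}) down to an element of bounded support, after which a polynomial-in-$n$ conjugation step finishes the job. Your Steps~2 and~3 are broadly in the right spirit; Step~1 as written is a genuine gap, and the fix is to replace the search for a large-prime-order word by the support-reduction lemma applied to a single generator.
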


This was the best known bound for $\mathrm{S}_{n}$ or $\mathrm{A}_{n}$ for over two decades, until Helfgott and Seress recently showed the following.

\begin{thm}[Helfgott and Seress, \cite{HS14}]
Let $G=\mathrm{S}_{n}$ or $G=\mathrm{A}_{n}$, then for any symmetric generating set, $$\diam(G)\leq \exp(O((\log n)^{4}\log \log n))=\exp((\log \log |G|)^{O(1)})$$.
\end{thm}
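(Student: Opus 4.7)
The strategy is to reduce the theorem to the problem of producing a short word in $S\cup S^{-1}$ that represents an element of very small support. Concretely, I aim to show the following claim: there is a word of length $W(n):=\exp(O((\log n)^4\log\log n))$ in $S\cup S^{-1}$ that evaluates to a $3$-cycle (when $G=A_n$) or a transposition (when $G=S_n$). Once such a word is in hand, a classical Bochert-type result (together with the fact that every element of $A_n$ is a product of $O(n^2)$ conjugates of any fixed $3$-cycle, and that short conjugating elements can be produced using the transitivity of $\langle S\rangle$) upgrades it to the desired bound $\diam(G)\leq W(n)$, absorbing the polynomial $n^2$ overhead into the $O(\cdot)$ in the exponent.

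To prove the claim, I would use an iterated support-reduction procedure. At each stage we have in hand a short word $g$ of some support $A\subseteq\{1,\dots,n\}$, and we wish to produce another short word evaluating to a nonidentity element with strictly smaller support. The standard device is to find a short word $w$ such that $wA$ meets $A$ in a controlled way; then the commutator $[g,wgw^{-1}]$ is supported inside $A\cap wA$, yielding the desired reduction. The existence of such a $w$ of bounded length is established by an averaging/pigeonhole argument applied to the action of $\langle S\rangle$ on subsets of size $|A|$, provided this action is sufficiently rich. Starting from a crude large-support element (obtained from $S$ via a Schreier-style procedure in polynomially many steps), each round multiplies the word length by a factor of $\exp(O((\log n)^3\log\log n))$, and $O(\log n)$ such rounds reduce the support to $\leq 3$.

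The main obstacle is making each round cost only $\exp(O((\log n)^3\log\log n))$, rather than a power of $n$. The ``rich action'' hypothesis in the averaging step fails precisely when $\langle S\rangle$ acts on $A$ as a primitive group not containing $A_{|A|}$, and one must then invoke a structural result on such primitive groups (a Babai--Cameron--P\'alfy-type bound), preferably classification-free, so as not to lose quantitative control. A secondary obstacle is bookkeeping: the recursion descends through imprimitivity block systems, so the inductive hypothesis must be stable under passage to blocks, and one must track how words in a smaller alphabet (``block generators'') translate back into words in $S$. Balancing these two costs is precisely what produces the exponent $(\log n)^4\log\log n$ rather than something smaller like $(\log n)^2$ or much larger; I expect the $\log\log n$ factor to enter through this alphabet-translation step.
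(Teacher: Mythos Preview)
The paper does not prove this theorem. It is quoted in the introduction as a background result of Helfgott and Seress \cite{HS14}, with no proof supplied; the paper's own contribution concerns finite simple groups of Lie type, not $\mathrm{S}_n$ or $\mathrm{A}_n$. So there is no ``paper's own proof'' to compare your proposal against.

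On the substance of your sketch: the overall architecture (produce a short word of small support, then cover the group by conjugates) is correct in spirit and is indeed the shape of both the Babai--Seress and Helfgott--Seress arguments. However, the decisive content is hidden in your sentence ``each round multiplies the word length by a factor of $\exp(O((\log n)^3\log\log n))$''. This is precisely the theorem, not a step one can dispatch by a pigeonhole/averaging argument on subset actions. The Babai--Seress method you describe yields only $\exp((1+o(1))\sqrt{n\log n})$ per reduction, because the Schreier-graph diameter bounds one gets from transitivity on $k$-subsets are polynomial in $n$, not quasipolynomial in $\log n$. Helfgott and Seress obtain the quasipolylogarithmic bound by a substantially different mechanism: a product-type theorem combined with random-walk / sieve arguments and the Babai--Beals--Seress machinery that passes to elements with a large prime cycle, together with a delicate descent through block systems. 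Your outline does not contain the ingredient that beats the $\sqrt{n\log n}$ barrier, so as written it would reproduce the 1988 bound, not the 2014 one.
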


In this article we give a modest upper bound on the diameter for finite simple groups of Lie type, where the dependency on rank is lessened.

\begin{thm}[Main Theorem]
Let $G$ be a finite simple group of Lie type, with rank $n$ and base field $\F_q$, then
$$\diam(G)\leq q^{O(n((\log n+\log q)^3))}.$$
In particular, if the base field has bounded size, we have
$$\diam(G)\leq \exp(O(n(\log n)^3))=\exp(O(\sqrt{\log |G|}(\log\log |G|)^3)).$$
\end{thm}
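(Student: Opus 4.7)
The plan is to adapt the Babai--Seress strategy for $\mathrm{S}_n$ to the Lie-theoretic setting, combining three ingredients: the Pyber--Szabo / Breuillard--Green--Tao diameter bound for bounded-rank finite simple groups of Lie type, the Helfgott--Seress bound for the symmetric group, and a pigeonhole-style extraction of elements with small support in the root system.

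First I would show that given the arbitrary symmetric generating set $S$, one can produce a nontrivial element lying in a proper Levi subgroup $L \leq G$ of bounded rank, as a word in $S$ of length at most $q^{O(n)}$. The approach is via the Bruhat decomposition: among a suitably large collection of short products of $S$, many must fall into a common double coset of a fixed maximal parabolic $P$, and taking ratios yields elements of $P$; a second refinement (iterating the argument inside $P$) produces an element of $L$ itself. Conjugating this element by further short words in $S$ gives a generating set for $L$, and since $L$ has rank $O(1)$ over $\F_q$, the Pyber--Szabo / Breuillard--Green--Tao theorem yields $\diam(L) \leq (\log q)^{O(1)}$, so every element of $L$ is a short word in $S$. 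An entirely parallel argument extracts representatives of the Weyl group $W$ inside $G$ (elements normalising a maximal torus); for the classical large-rank types $W$ is essentially a symmetric or hyperoctahedral group on $O(n)$ points, whose diameter is $\exp(O((\log n)^4 \log \log n))$ by Helfgott--Seress.

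Finally, the Weyl elements allow one to conjugate $L$ across all simple-root positions, so that an arbitrary $g \in G$ decomposes as a product of $O(n)$ factors, each lying in a conjugate of $L$. Tracking word lengths through these steps and multiplying, one obtains the bound $\diam(G) \leq q^{O(n(\log n + \log q)^3))}$, from which the corollary for bounded $q$ follows immediately via $\log|G| = \Theta(n^2 \log q)$. The main obstacle is the first step: the pigeonhole extraction of a bounded-support element with good quantitative control. In the $\mathrm{S}_n$ case this rests on cycle-structure arguments peculiar to permutations; in the Lie-theoretic setting it must be replaced by quantitative escape-from-subvariety estimates in the spirit of Larsen--Pink, and ensuring that the rank enters only polynomially (rather than exponentially) into the resulting word-length bound is the central technical difficulty.
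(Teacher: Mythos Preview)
Your proposal is not a proof: you explicitly identify the first step---producing a nontrivial element in a bounded-rank Levi $L$ as a word of length $q^{O(n)}$---as the ``central technical difficulty'' and do not resolve it. The pigeonhole-in-Bruhat-cells idea, as stated, does not do the job. Landing in a maximal parabolic $P$ via ratios only gives you elements of a Levi of rank $n-1$, not bounded rank; iterating down to bounded rank costs $O(n)$ rounds, and controlling word length multiplicatively through those rounds without losing an exponential factor in $n$ is exactly the problem the paper is trying to solve. Invoking Larsen--Pink-type escape estimates here reintroduces precisely the poor rank dependence of the Pyber--Szabo and Breuillard--Green--Tao bounds, which is what you are supposed to be improving.

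The paper's route is entirely different and does not use the Bruhat decomposition, Levi subgroups, Pyber--Szabo/BGT, or Helfgott--Seress at all. The relevant ``support'' notion is $\deg(A)=\rank(A-I)$. The key mechanism is number-theoretic: one shows, using Fouvry's theorem on the largest prime factor of $p-1$, that there exist primes $p_1,\dots,p_r$ coprime to $p(q-1)$ with $\sum(p_i-1)=O((\log n+\log q)^3)$ but $\lcm(p_i-1)>n^4$. A $t$-transversal set (obtained for $t=O((\log n+\log q)^3)$ at cost $q^{O(nt)}$ by a trivial Schreier-graph bound) lets one force any matrix to acquire primitive $p_i$-th roots of unity as eigenvalues via a commutator trick; an averaging argument over the eigenvalues then shows some power has degree at most a quarter of the original. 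Iterating $O(\log n)$ times yields a nontrivial element of degree $O((\log n+\log q)^3)$ and length $q^{O(n(\log n+\log q)^3)}$, after which Liebeck--Shalev (not a decomposition into Levi pieces) finishes. The prime inequality, not a structural reduction to smaller-rank groups, is what makes the rank dependence subexponential.
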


\subsection{Preliminaries}

\begin{de}
Given an algebraic group $G$ over a finite field $\F_q$, the \emph{algebraic rank} of $G$ is the dimension of a maximal torus in $G$.
\end{de}

\begin{prop}
There is an absolute constant $C$, such that any finite simple group of Lie type $G$ of algebraic rank larger than $C$ must be a projective special linear group $\mathrm{PSL}_n(\F_q)$, a projective symplectic group $\mathrm{PSp}_n(\F_q)$, a projective special unitary group $\mathrm{PSU}_n(\F_q)$, or the simple quotient $\mathrm{P\Omega}_n(\F_q)$ of the derived subgroup $\mathrm{\Omega}_n(\F_q)$ of the orthogonal group $\mathrm{O}_n(\F_q)$.
\end{prop}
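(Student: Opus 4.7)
The plan is to invoke the classification of finite simple groups of Lie type directly, which is the standard tool for statements of this shape. Every such group arises as $G^F$ (modulo centre) where $G$ is a connected reductive algebraic group over $\overline{\F_q}$ and $F$ is a Steinberg endomorphism; the simple ones are classified by a Dynkin diagram (possibly with a diagram automorphism) and fall into the finite list of Chevalley, Steinberg, Suzuki and Ree families. Concretely I would enumerate:
\begin{en}
\item the untwisted Chevalley families $A_n, B_n, C_n, D_n$ and the exceptional types $G_2, F_4, E_6, E_7, E_8$;
\item the Steinberg twisted families ${}^2A_n, {}^2D_n, {}^2E_6, {}^3D_4$;
\item the Suzuki--Ree families ${}^2B_2, {}^2G_2, {}^2F_4$.
\end{en}

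Among these, every exceptional or Suzuki--Ree family has algebraic rank at most $8$ (the maximum being $E_8$), so once we set $C := 8$, any $G$ of algebraic rank strictly larger than $C$ is forced to lie in one of the four unbounded classical families $A_n, B_n/D_n, C_n, {}^2A_n, {}^2D_n$. The second step is then purely cosmetic identification: type $A_{n-1}$ gives $\mathrm{PSL}_n(\F_q)$, type $C_n$ gives $\mathrm{PSp}_{2n}(\F_q)$, type ${}^2A_{n-1}$ gives $\mathrm{PSU}_n(\F_q)$, and the orthogonal types $B_n$ (odd dimension), $D_n$ (split even) and ${}^2D_n$ (non-split even) all produce simple quotients of derived subgroups of orthogonal groups, which the proposition groups together under the single notation $\mathrm{P\Omega}_n(\F_q)$.

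There is no real obstacle here; the statement is a bookkeeping consequence of the classification together with the elementary fact that the exceptional Dynkin diagrams have bounded size. The only mild subtlety is notational: one should make sure that the symbol $\mathrm{P\Omega}_n(\F_q)$ is understood to cover $B_{(n-1)/2}$, $D_{n/2}$ (split) and ${}^2D_{n/2}$ (non-split) uniformly, since otherwise one would need to list three orthogonal families separately. With that convention in place the proposition follows immediately by taking $C$ to be any upper bound on the ranks of $E_8, F_4, G_2, {}^2F_4, {}^2G_2, {}^2B_2, {}^3D_4, {}^2E_6$, for instance $C = 8$.
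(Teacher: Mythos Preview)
Your proposal is correct and follows exactly the same approach as the paper, which simply says ``Going through the list of finite simple groups of Lie type, the algebraic ranks of all but the above four families of groups are bounded by an absolute constant $C$.'' You have merely carried out that enumeration explicitly and supplied the value $C=8$, which is more detail than the paper provides but the same argument.
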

\begin{proof}
Going through the list of finite simple groups of Lie type, the algebraic ranks of all but the above four families of groups are bounded by an absolute constant $C$.
\end{proof}

In this paper, we are only interested in finite simple groups of Lie type with large ranks. Therefore, these four families of groups are all we need to deal with.

\begin{prop}
The algebraic rank of groups $\mathrm{GL}_n(\F_q)$, $\mathrm{SL}_n(\F_q)$, $\mathrm{PSL}_n(\F_q)$, $\mathrm{Sp}_n(\F_q)$, $\mathrm{PSp}_n(\F_q)$, $\mathrm{U}_n(\F_q)$, $\mathrm{SU}_n(\F_q)$, $\mathrm{PSU}_n(\F_q)$, $\mathrm{O}_n(\F_q)$, $\mathrm{\Omega}_n(\F_q)$, $\mathrm{P\Omega}_n(\F_q)$ are between $\frac{n}{2}-1$ and $n$.
\end{prop}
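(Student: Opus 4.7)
The plan is to go through the classical families one by one, exhibit an explicit maximal torus in each, and verify that its dimension lies in $[n/2-1, n]$. Since each listed group either embeds as or is a simple quotient of a classical matrix group, and central isogenies preserve the dimension of a maximal torus, the computation reduces to a handful of cases.

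First I would handle the linear and unitary families. For $\mathrm{GL}_n(\F_q)$ the subgroup of diagonal matrices is a maximal torus of dimension $n$; intersecting with the determinant-one condition drops the dimension by one, so $\mathrm{SL}_n(\F_q)$ has rank $n-1$, and passage to $\mathrm{PSL}_n$ by modding out the finite center $Z$ does not change the torus dimension. For the unitary family, I would invoke the fact that $\mathrm{U}_n$, $\mathrm{SU}_n$, $\mathrm{PSU}_n$ are $\F_q$-forms of $\mathrm{GL}_n$, $\mathrm{SL}_n$, $\mathrm{PSL}_n$ respectively (they become isomorphic over $\overline{\F_q}$, or over $\F_{q^2}$), so they share the same algebraic group structure over the algebraic closure and hence the same dimension of a maximal torus, namely $n$, $n-1$, $n-1$.

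Next I would handle the symplectic and orthogonal families. For $\mathrm{Sp}_n(\F_q)$ with $n$ even, write $n=2m$; in the standard symplectic basis a maximal torus consists of diagonal matrices $\mathrm{diag}(t_1,\dots,t_m,t_1^{-1},\dots,t_m^{-1})$, of dimension $m=n/2$, and again passing to $\mathrm{PSp}_n$ preserves the dimension. For $\mathrm{O}_n(\F_q)$, $\mathrm{\Omega}_n(\F_q)$ and $\mathrm{P\Omega}_n(\F_q)$, choose a basis adapted to a maximal hyperbolic splitting: in even dimension $n=2m$ one gets an $m$-dimensional torus diagonally, and in odd dimension $n=2m+1$ one also gets an $m$-dimensional torus (the remaining one-dimensional anisotropic piece contributes nothing to a split torus). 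Thus the algebraic rank is $\lfloor n/2 \rfloor$, and this is unchanged after restricting to the derived subgroup $\Omega_n$ and then modding out the (finite) center.

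Collecting the computations, every group on the list has rank in $\{n,\,n-1,\,n/2,\,\lfloor n/2\rfloor\}$, each of which is bounded above by $n$ and below by $\lfloor n/2 \rfloor \geq n/2 - 1$, which is exactly the claimed bound. There is no real obstacle here beyond bookkeeping; the only point to be careful about is the unitary case, where one should emphasize that the "algebraic rank" is the dimension of a maximal torus of the underlying algebraic group over $\overline{\F_q}$ and not the $\F_q$-split rank, since the latter would give a much smaller answer.
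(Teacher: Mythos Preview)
Your proposal is correct and follows exactly the approach the paper indicates: the paper's own proof is the single sentence ``This is done by computing their algebraic rank one by one,'' and you have simply carried out that computation explicitly, case by case, arriving at ranks in $\{n,\,n-1,\,n/2,\,\lfloor n/2\rfloor\}\subseteq[n/2-1,n]$. Your remark distinguishing the absolute rank from the $\F_q$-split rank in the unitary case is a helpful clarification that the paper omits.
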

\begin{proof}
This is done by computing their algebraic rank one by one.
\end{proof}

Let $G$ be any group. Given a subset $S$ of $G$, we shall use $S^d$ to denote $\{g_1...g_d:g_1,...,g_d\in S\}$, i.e., the set of product of $d$ elements of $S$.

We shall use the ``big O'' and ``small o'' convention. We let $O(g(x))$ to denote a quantity whose absolute value is bounded by $Cg(x)$ for some absolute constant $C$. And we let $o_n(1)$ to denote a quantity whose limit is 0 as $n$ goes to infinity.

\subsection{Outline of the Proof}

\begin{de}
Given a matrix $A$, we define its \emph{degree} to be $\deg(A)=\rank(A-I)$. Equivalently, it is $n$ minus the dimension of the fixed subspace of $A$.
\end{de}

The general idea is to find a matrix of small degree and close to the identity. And from there on, the expansion would be very fast. This is analogous to \cite{BS88}, the case of symmetric and alternating groups, where one first finds elements of small support (i.e., 2-cycles or 3-cycles), and then proceed to fill up the whole group.

Given any finite simple group of Lie type, we proceed via the following steps. Let $G$ be $\mathrm{SL}_n(\F_q)$, $\mathrm{Sp}_n(\F_q)$, $\mathrm{SU}_n(\F_q)$ or $\mathrm{\Omega}_n(\F_q)$. Then we can assume that $G$ acts on a vector space $V$ of dimension $n$.

\begin{en}
\item We start by finding a subset of small diameter in $G$ near the identity, which we call a $t$-transversal set. Such a set contains an extension for every linear or isometric embeddings of any $t$-dimensional subspace $W$ into $V$. This is an analogy to a $t$-transitive subset of a symmetric group $\mathrm{S}_n$. See Section~\ref{sec:ttrans} for the linear case, and Section~\ref{sec:ousp} for the symplectic case, the unitary case, and the orthogonal case. 
\item Using the $t$-transversal set above, we can find a special matrix, which we call a P-matrix. Some large power of a P-matrix will have a much smaller degree than the original one. This is the main degree reducing step, inspired by Lemma 1 of the paper of Babai and Seress \cite{BS88}. The result on P-matrices is dependent on an inequality of primes, which is deduced in Section~\ref{sec:NumTh}. For P-matrices, see Section~\ref{sec:pmat}.
\item Combined with repeated commutators with carefully chosen elements from the $t$-transversal set, we can repeat the above step many times, until we reach a matrix of very small degree. See Section~\ref{sec:comm} for the linear case, and Section~\ref{sec:ouspcomm} for the symplectic case, the unitary case, and the orthogonal case.
\item From a small degree matrix, one can quickly fill up its conjugacy class. Then by the result of Liebeck and Shalev \cite{LS01}, from a conjugacy class, one can quickly fill up the whole group. See Section~\ref{sec:conj}.
\end{en}

\subsection{Acknowledgements} 

We would like to thank Emmanuel Breuillard and Terence Tao for a number of helpful discussions and advice in this subject.

\section{$t$-Transversal Sets with Small Diameters}
\label{sec:ttrans}

Let $V$ be a vector space of dimension $n$ over the field $\F_q$. Let the group $\mathrm{GL}_n(\F_q)$ act on it naturally.

\begin{de}
A subset $S$ of $\mathrm{GL}_n(\F_q)$ is called a \emph{$t$-transversal set} if given any embedding $X$ of a $t$-dimensional subspace $W$ into $V$, we can find $A\in S$ that extends $X$ on $W$.
\end{de}

\begin{lem}
$\mathrm{GL}_n(\F_q)$ is $t$-transversal for all $t$, and $\mathrm{SL}_n(\F_q)$ is $t$-transversal for all $t<n$.
\end{lem}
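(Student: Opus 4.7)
The plan is to proceed by a direct basis-extension argument, handling $\mathrm{GL}_n(\F_q)$ first and then bootstrapping to $\mathrm{SL}_n(\F_q)$ by a one-parameter adjustment.

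For the $\mathrm{GL}_n(\F_q)$ case, I would start with a linear injection $X\colon W\hookrightarrow V$ where $\dim W=t$. Pick any basis $e_1,\dots,e_t$ of $W$ and extend it to a basis $e_1,\dots,e_n$ of $V$. Since $X$ is injective, the images $X(e_1),\dots,X(e_t)$ remain linearly independent in $V$, so I can extend them to a basis $X(e_1),\dots,X(e_t),v_{t+1},\dots,v_n$ of $V$. Defining $A\in\mathrm{GL}_n(\F_q)$ by $A(e_i)=X(e_i)$ for $i\le t$ and $A(e_j)=v_j$ for $t<j\le n$ produces an element of $\mathrm{GL}_n(\F_q)$ that extends $X$, which is exactly what $t$-transversality demands.

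For $\mathrm{SL}_n(\F_q)$ with $t<n$, the construction is the same, but I use the hypothesis $t<n$ to obtain at least one ``free'' basis vector $v_n$ whose choice is not constrained by $X$. Let $d=\det(A)$ for the matrix $A$ constructed above. Replacing $v_n$ by $d^{-1}v_n$ keeps the collection $X(e_1),\dots,X(e_t),v_{t+1},\dots,v_{n-1},d^{-1}v_n$ a basis of $V$ (scaling by a nonzero scalar preserves linear independence) and scales the last column of the matrix of $A$ relative to the basis $e_1,\dots,e_n$ by $d^{-1}$, so the new matrix has determinant $1$. It still satisfies $A(e_i)=X(e_i)$ for $i\le t$, hence extends $X$ and lies in $\mathrm{SL}_n(\F_q)$.

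There is essentially no obstacle here beyond making the roles of the two bases explicit; the only subtlety worth flagging is why the $\mathrm{SL}_n$ statement requires $t<n$. If $t=n$ then $X$ is already a full linear automorphism and its determinant is determined, so no adjustment is possible; the hypothesis $t<n$ is precisely what guarantees the existence of the free direction $v_n$ that we scale to normalize the determinant.
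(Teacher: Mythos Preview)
Your proof is correct and follows essentially the same basis-extension argument as the paper: both extend a basis of $W$ and a basis of $X(W)$ to full bases of $V$, define the extending map accordingly, and in the $\mathrm{SL}_n$ case rescale one of the free extension vectors to force determinant $1$. The only cosmetic difference is that the paper phrases the extension as $BA^{-1}$ for two change-of-basis matrices $A,B$, whereas you describe the linear map directly by its action on the chosen basis.
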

\begin{proof}
Let $W$ be any subspace with a basis $w_1,...,w_t$. We can complete this into a basis of $V$ with new vectors $v_1,...,v_{n-t}$. Let $A$ be a matrix with column vectors $w_1,...,w_t,v_1,...,v_{n-t}$. In the case when $t<n$, we can multiply $v_{n-t}$ by a constant so that $\det (A)=1$.

For any embedding $X$ of $W$ into $V$, $X(w_1),...,X(w_t)$ are linearly independent. We can complete this into a basis of $V$ with new vectors $u_1,...,u_{n-t}$. Let $B$ be a matrix with column vectors $X(w_1),...,X(w_t),u_1,...,u_{n-t}$. In the case when $t<n$, we can multiply $u_{n-t}$ by a constant so that $\det (B)=1$.

Now $B(A)^{-1}$ is in $\mathrm{GL}_n(\F_q)$ and, if $t<n$, also in $\mathrm{SL}_n(\F_q)$. We also have $(B(A)^{-1})|_W=X$.
\end{proof}

\begin{lem}
For any symmetric subset $S$ of $\mathrm{GL}_n(\F_q)$, if the subgroup generated by $S$ is $t$-transversal, then $\bigcup_{d=1}^{d=q^{nt}} S^d$ is $t$-transversal.
\end{lem}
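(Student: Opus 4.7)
My plan is a pigeonhole argument on the partial restrictions to $W$ along a shortest expression for an extension of $X$, in the same spirit as the classical argument that bounds the diameter of an orbit by its size.

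Let $H=\langle S\rangle$, and fix a $t$-dimensional subspace $W\subseteq V$ together with an embedding $X:W\to V$. By $t$-transversality of $H$ there exists some $A\in H$ with $A|_W=X$; since $S$ is symmetric and generates $H$, write $A=s_N s_{N-1}\cdots s_1$ with each $s_i\in S$ and $N$ minimal. Put $A_i=s_i s_{i-1}\cdots s_1$ (so $A_0=I$, $A_N=A$) and $X_i=A_i|_W$. The key observation is that $X_0,X_1,\ldots,X_N$ are pairwise distinct: if $X_i=X_j$ with $i<j$, then $A_i$ and $A_j$ agree on $W$, hence so do $(s_N\cdots s_{j+1})A_i$ and $A=(s_N\cdots s_{j+1})A_j$, so the former is a product of length $N-(j-i)<N$ that still extends $X$, contradicting the minimality of $N$.

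It remains to count. Each $X_i$ is an injective linear map $W\to V$, and the number of such maps is $\prod_{k=0}^{t-1}(q^n-q^k)\le q^{nt}$, since an ordered basis of $W$ may be sent to any $t$-tuple of linearly independent vectors in $V$. Therefore $N+1\le q^{nt}$, so $A\in S^N\subseteq\bigcup_{d=1}^{q^{nt}}S^d$. The degenerate case $N=0$ (where $X$ is simply the inclusion) is handled by noting that for any $s\in S$ we have $ss^{-1}=I\in S^2$, and $I$ extends the inclusion.

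No step is a genuine obstacle: the content reduces to the standard fact that a minimum-length word cannot revisit the intermediate value $A_i|_W$, together with the routine count of linear embeddings $W\hookrightarrow V$ over $\F_q$.
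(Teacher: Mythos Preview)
Your argument is correct and is essentially the same as the paper's: the paper phrases it as ``the Schreier graph of $H$ acting on the set $L(W)$ of embeddings is connected, hence has diameter at most $|L(W)|\le q^{nt}$,'' and your pigeonhole on the intermediate restrictions $X_i=A_i|_W$ along a shortest word is exactly the standard proof of that diameter bound. One small wording point: as written you first fix an extension $A$ and then choose $N$ minimal \emph{for that} $A$, but the shorter word $(s_N\cdots s_{j+1})A_i$ you produce may represent a different element, so it does not directly contradict minimality of $N$ for $A$. The fix is immediate: take $N$ minimal over all extensions of $X$ (equivalently, let $A$ be an extension of minimal length), and then your contradiction goes through verbatim.
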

\begin{proof}
Let $W$ be any $t$-dimensional subspace. Let $L(W)$ be the set of embeddings of $W$ into $V$. Let $H$ be the subgroup generated by $S$. Then an element $g$ of $H$ acts on $L(W)$ by $g(X)=(g\circ X)|_W$ for any $X\in L(W)$. Let $\G$ be the corresponding Schreier graph of this action of $H$ on $L(W)$ with generating set $S$, i.e., the vertices are elements of $L(W)$, and two vertices $X,Y$ are connected iff $g(X)=Y$ for some $g\in S$.

Now, since $H$ is $t$-transversal, the graph $\G$ is connected. So the diameter of $\G$ is trivially bounded by its number of vertices, which is at most $q^{nt}$. As a result, the set $\bigcup_{d=1}^{q^{nt}} S^d$ is $t$-transversal.
\end{proof}

\begin{cor}
\label{Cor:ttrans}
Given any symmetric generating set $S$ for $\mathrm{GL}_n(\F_q)$, the set $\bigcup_{d=1}^{d=q^{nt}} S^d$ is $t$-transversal. If $t<n$, then the same statement is true with $\mathrm{SL}_n(\F_q)$ replacing $\mathrm{GL}_n(\F_q)$.
\end{cor}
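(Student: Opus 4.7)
The plan is to observe that this corollary is an essentially immediate consequence of the two preceding lemmas, obtained by combining them in the most natural way. I would simply specialize the second lemma to the case where the subgroup generated by $S$ happens to be the full ambient group.

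More precisely, I would argue as follows. Suppose $S$ is a symmetric generating set for $\mathrm{GL}_n(\F_q)$. Then the subgroup $\langle S\rangle$ generated by $S$ equals $\mathrm{GL}_n(\F_q)$. By the first lemma of this section, $\mathrm{GL}_n(\F_q)$ is $t$-transversal for every $t$, so the hypothesis of the second lemma is satisfied. Applying the second lemma to $S$ therefore gives that $\bigcup_{d=1}^{q^{nt}} S^d$ is $t$-transversal, as claimed. For the $\mathrm{SL}_n(\F_q)$ statement, one repeats the same argument using the fact (again from the first lemma) that $\mathrm{SL}_n(\F_q)$ is $t$-transversal whenever $t<n$, together with the fact that the second lemma also applies to symmetric subsets of $\mathrm{SL}_n(\F_q)$ since $\mathrm{SL}_n(\F_q)\subseteq \mathrm{GL}_n(\F_q)$.

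There is no real obstacle here: the corollary is a bookkeeping assembly of the previous two lemmas. The only minor point worth being careful about is that, when invoking the second lemma in the $\mathrm{SL}_n$ case, one should verify that its proof (which uses the Schreier graph argument and the trivial bound $|L(W)|\leq q^{nt}$ on the number of embeddings of a $t$-dimensional subspace $W$ into $V$) goes through verbatim when the ambient group is restricted to $\mathrm{SL}_n(\F_q)$; this is clear since the action on $L(W)$ and the vertex count bound do not depend on which subgroup of $\mathrm{GL}_n(\F_q)$ contains $S$.
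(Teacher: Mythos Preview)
Your proposal is correct and matches the paper's approach exactly: the corollary is stated without proof in the paper precisely because it is the immediate combination of the two preceding lemmas that you describe. Your additional remark that the Schreier-graph argument of the second lemma applies verbatim in the $\mathrm{SL}_n$ case is accurate and, if anything, slightly more careful than the paper itself.
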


\section{An Inequality on Primes}
\label{sec:NumTh}

In this section, we shall establish an inequality on primes to be used in the next section. 

Throughout this section, we shall fix a prime $p_0$ and fix a power of it $q_0$, which in the next section shall become the characteristic and the order of a finite field.

Let $p_1,...,p_r$ be the first $r$ primes coprime to $p_0(q_0-1)$. Let $M$ be the least common multiple of $p_1-1,p_2-1,p_3-1,...,p_r-1$. Let $S$ be the sum of $p_1-1,p_2-1,p_3-1,...,p_r-1$. Our goal for this section is the following proposition:

\begin{lem}
\label{Lem:PrimeLCM}
There exist absolute constants $c_1$ and $c_2$ such that, if $p_r\geq c_1\log q_0$, then $$S\leq (p_r)^2\leq c_2(\log M)^3.$$
\end{lem}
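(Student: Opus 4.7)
The first inequality $S \leq p_r^2$ I would dispatch by pure counting. Since $p_1 < p_2 < \cdots < p_r$ are $r$ distinct primes each at most $p_r$, we have $r \leq \pi(p_r) \leq p_r$, so $S = \sum_{i=1}^r (p_i - 1) \leq r \cdot p_r \leq p_r^2$. (A Chebyshev-type estimate would even give $S = O(p_r^2/\log p_r)$, but the weak form is enough.) Before tackling the second inequality, I would also record the basic count $r \geq \pi(p_r) - O(\log q_0)$: the excluded primes are exactly those dividing $p_0(q_0-1)$, numbering at most $\omega(p_0(q_0-1)) = O(\log q_0)$. Combined with the hypothesis $p_r \geq c_1 \log q_0$ for suitably large $c_1$ and with the Prime Number Theorem, this yields $r \gtrsim p_r/\log p_r$.

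For the substantive bound $p_r^2 \leq c_2(\log M)^3$, which is equivalent to $\log M \geq c\, p_r^{2/3}$, my strategy is to show that $\lcm(1, 2, \ldots, N)$ divides $M$ for some $N$ of order $p_r^{2/3}$; Chebyshev's estimate $\psi(N) \geq c' N$ would then immediately give $\log M \geq c' N \geq c\, p_r^{2/3}$. To verify the divisibility, the key observation is: if for each $m \leq N$ we can find some $p_i$ with $p_i \equiv 1 \pmod m$, then $m \mid p_i - 1 \mid M$, so $\lcm(1,\ldots,N) \mid M$. Existence of such a $p_i$ below the requisite bound is furnished by Linnik's theorem on the least prime in an arithmetic progression, which guarantees a prime $p \equiv 1 \pmod m$ with $p \leq C m^L$ for an absolute constant $L$. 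Should that prime happen to lie in the excluded set (primes dividing $p_0(q_0-1)$), Siegel--Walfisz in the applicable range ensures that many other primes in the same progression are available below $p_r$, so we may simply pass to the next one; here the lower bound $p_r \geq c_1 \log q_0$ is precisely what makes the $O(\log q_0)$ exceptional primes negligible.

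The point I expect to be the main obstacle is pushing $N$ all the way up to $p_r^{2/3}$. A direct invocation of Linnik's theorem with the current best constant ($L \approx 5.2$ by Xylouris) only yields $\log M \gtrsim p_r^{1/L}$, which is far short of what is needed. To reach the exponent $2/3$ one must argue more cleverly: either by using a Bombieri--Vinogradov-style averaging that handles all but a negligible set of moduli $m \leq p_r^{1/2-\varepsilon}$ simultaneously, or by combining the uniform Linnik contribution from small moduli with the further contribution of higher prime powers $\ell^k \mid M$ (each supplying an extra $\log \ell$ to $\log M$ via $v_\ell(M) \log \ell$). I would expect this analytic accounting to be the technical heart of the lemma. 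Once $\log M \geq c\, p_r^{2/3}$ is in hand, cubing yields $(\log M)^3 \geq c^3 p_r^2$, i.e., $p_r^2 \leq c_2 (\log M)^3$ with $c_2 = c^{-3}$, completing the proof.
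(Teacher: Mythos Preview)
Your treatment of $S\leq p_r^2$ is fine and matches the paper. The gap is in the second inequality. You correctly note that a direct appeal to Linnik only yields $\log M\gg p_r^{1/L}$, which is far too weak, and you propose to rescue this with Bombieri--Vinogradov averaging or with higher prime-power contributions. Neither of these can reach the required exponent. Bombieri--Vinogradov controls moduli only up to $p_r^{1/2-\varepsilon}$, so even in the best case you would certify that (most) primes $\ell\le p_r^{1/2-\varepsilon}$ divide $M$, giving $\log M\gg p_r^{1/2-\varepsilon}$ and hence $(\log M)^3\gg p_r^{3/2-3\varepsilon}$, still short of $p_r^{2}$. The prime-power idea does not help: for every prime $\ell$ one has $\ell^{v_\ell(M)}\le\max_i(p_i-1)<p_r$, so each $\ell$ contributes at most $\log p_r$ to $\log M$, and the total is again bounded by the number of prime divisors of $M$ you can exhibit, which is stuck below $p_r^{1/2}/\log p_r$ with Bombieri--Vinogradov input. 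Getting moduli up to $p_r^{2/3}$ unconditionally would amount to an Elliott--Halberstam-type statement, which is not known.

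The paper's argument is genuinely different and does not try to force small moduli to divide $M$. Instead it shows that $M$ has many \emph{large} prime factors, using Fouvry's theorem: there is an absolute $\delta>\tfrac{2}{3}$ such that $\gg p_r/\log p_r$ of the primes $p\le p_r$ satisfy $P^{+}(p-1)\ge p_r^{\delta}$. After removing the $O(\log q_0/\log\log q_0)$ primes dividing $p_0(q_0-1)$ (here is where $p_r\ge c_1\log q_0$ is used), one still has $\gg p_r/\log p_r$ such primes among the $p_i$. Brun--Titchmarsh bounds how many of these can share the same value of $P^{+}(p_i-1)$ by $O(p_r^{1-\delta})$, so the set of distinct values $P^{+}(p_i-1)$ arising has size $\gg p_r^{\delta}/\log p_r$. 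Each such value is a prime $\ge p_r^{\delta}$ dividing $M$, hence $\log M\gg (p_r^{\delta}/\log p_r)\cdot\delta\log p_r\gg p_r^{\delta}$, and cubing gives $(\log M)^3\gg p_r^{3\delta}>p_r^{2}$. The paper explicitly remarks that the exponent $3$ could be replaced by $2/\delta$ for any admissible $\delta$ in Fouvry's theorem, confirming that the barrier you encountered is intrinsic to the depth of the analytic input rather than a matter of bookkeeping.
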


Before we prove this, let us first set up more notations. Let $P^+$ be the function that sends each positive integer to its largest prime factor. Let $P=\{p_1,...,p_r\}$. For any $\delta>0$, let $P_\delta=\{\text{prime number }p:3\leq p\leq p_r,P^+(p-1)\geq (p_r)^\delta\}$, and let $P_\delta^*=P_\delta\cap P$.

We start by citing an important theorem of Fouvry.

\begin{lem}[(Fouvry \cite{F85})]
There is an absolute constant $\delta>\frac{2}{3}$, and an absolute constant $c_0, c_p$, such that for $p_r\geq c_p$, we have
$$|P_\delta|\geq c_0\frac{p_r}{\log p_r}.$$
\end{lem}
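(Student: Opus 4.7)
The plan is to prove the two inequalities $S\leq p_r^2$ and $p_r^2\leq c_2(\log M)^3$ separately, with the real work concentrated in the second inequality and its dependence on Fouvry's bound on $|P_\delta|$.

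For the first inequality $S\leq p_r^2$, I would use only elementary prime counting. Since each $p_i\leq p_r$ and there are only $r\leq \pi(p_r)$ of them, Chebyshev's bound $\pi(p_r)=O(p_r/\log p_r)$ gives $S\leq r\cdot p_r\leq C p_r^2/\log p_r$, which is at most $p_r^2$ once $p_r$ exceeds an absolute constant. This absolute constant can be absorbed into $c_1$ since $p_r\geq c_1\log q_0\geq c_1\log 2$.

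The heart of the argument is the inequality $p_r^2\leq c_2(\log M)^3$. First, I would pass from $P_\delta$ to $P_\delta^*=P_\delta\cap P$. The only primes $\leq p_r$ missing from $P$ are the prime divisors of $p_0(q_0-1)$, and there are at most $1+\log_2(q_0-1)=O(\log q_0)$ of these. Hence $|P_\delta^*|\geq |P_\delta|-O(\log q_0)$, and choosing $c_1$ large enough so that $p_r\geq c_1\log q_0$ implies $O(\log q_0)\leq \tfrac{1}{2}\cdot c_0 p_r/\log p_r$, Fouvry's estimate yields
\begin{equation*}
|P_\delta^*|\;\geq\;\tfrac{c_0}{2}\cdot\frac{p_r}{\log p_r}.
\end{equation*}

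Next I would exploit that every $p\in P_\delta^*$ gives a prime $q_p:=P^+(p-1)\geq p_r^\delta$ dividing $p-1$, hence dividing $M$. The values $q_p$ need not be distinct, so I would count: for each fixed prime $q\geq p_r^\delta$, the number of primes $p\leq p_r$ with $q\mid p-1$ is at most $p_r/q\leq p_r^{1-\delta}$. Therefore the set $Q:=\{q_p:p\in P_\delta^*\}$ of \emph{distinct} large prime factors of $M$ satisfies
\begin{equation*}
|Q|\;\geq\;\frac{|P_\delta^*|}{p_r^{1-\delta}}\;\geq\;\frac{c_0}{2}\cdot\frac{p_r^\delta}{\log p_r}.
\end{equation*}
Since all elements of $Q$ are distinct primes dividing $M$, we get $M\geq \prod_{q\in Q} q\geq (p_r^\delta)^{|Q|}$, which after taking logarithms gives
\begin{equation*}
\log M\;\geq\;\delta\,(\log p_r)\,|Q|\;\geq\;\tfrac{c_0\delta}{2}\,p_r^\delta.
\end{equation*}
Because Fouvry supplies $\delta>2/3$, we have $3\delta>2$, so cubing yields $(\log M)^3\geq c\,p_r^{3\delta}\geq c\,p_r^2$ once $p_r$ exceeds an absolute constant, finishing the bound $p_r^2\leq c_2(\log M)^3$ (again absorbing the threshold into $c_1$).

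The main obstacle, and the reason a nontrivial hypothesis $p_r\geq c_1\log q_0$ is needed, is the transfer from $P_\delta$ to $P_\delta^*$: Fouvry's theorem counts primes $\leq p_r$ with a large prime factor in $p-1$, but we need this count to survive after removing the $O(\log q_0)$ primes dividing $p_0(q_0-1)$. Everything else (the multiplicity bound $p_r/q$, and the jump from $\log M\gtrsim p_r^\delta$ to the cube bound) is routine once $\delta>2/3$ is in hand.
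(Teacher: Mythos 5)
There is a fundamental mismatch here: the statement you were asked to prove is Fouvry's theorem itself --- the assertion that there exist an absolute $\delta>\frac{2}{3}$ and constants $c_0,c_p$ such that $|P_\delta|\geq c_0\frac{p_r}{\log p_r}$ for $p_r\geq c_p$ --- but your argument never proves this; it \emph{assumes} it (``Fouvry's estimate yields $|P_\delta^*|\geq\frac{c_0}{2}\frac{p_r}{\log p_r}$'') and then deduces the inequality $S\leq (p_r)^2\leq c_2(\log M)^3$, which is the content of Lemma~\ref{Lem:PrimeLCM}, a different (downstream) statement that the paper proves separately. The lemma under review is a deep result in analytic number theory on the largest prime factor of $p-1$ for primes $p$: establishing that a positive proportion (in the $\frac{p_r}{\log p_r}$ sense) of primes $p\leq p_r$ satisfy $P^+(p-1)\geq (p_r)^\delta$ with some $\delta$ strictly greater than $\frac{2}{3}$ requires estimates for primes in arithmetic progressions with moduli beyond the range accessible to the Bombieri--Vinogradov theorem (the dispersion method together with bounds on sums of Kloosterman sums), which is exactly why the paper cites \cite{F85} rather than proving it. No elementary counting argument of the kind you propose can produce such a $\delta>\frac{2}{3}$; indeed nothing in your write-up engages with the distribution of $P^+(p-1)$ at all.

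As a secondary remark, the portion you did write is essentially correct as a proof of Lemma~\ref{Lem:PrimeLCM}, and it is mildly different from the paper's route: where the paper controls the multiplicity of a large prime $q$ as a value of $P^+(p-1)$ via the Brun--Titchmarsh theorem, you use the trivial bound that at most $p_r/q\leq p_r^{1-\delta}$ primes $p\leq p_r$ satisfy $q\mid p-1$, which indeed suffices since only the exponent $\delta$ matters for the final power comparison $(\log M)^3\gtrsim p_r^{3\delta}\geq p_r^2$. But this does not repair the central problem: the statement you were asked to establish is the input from Fouvry, not the consequence, and your proposal leaves it entirely unproved.
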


\begin{cor}
There is an absolute constant $\delta>\frac{2}{3}$, and absolute constants $c_0$ and $c_3$, such that $|P_\delta^*|\geq c_0\frac{p_r}{\log p_r}-c_3\frac{\log q_0}{\log\log q_0}$ if $q_0>e^e$, and $|P_\delta^*|\geq c_0\frac{p_r}{\log p_r}-3$ if $q_0\leq e^e$. 
\end{cor}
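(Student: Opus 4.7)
The plan is to derive this corollary directly from Fouvry's lemma by accounting for the primes in $P_\delta$ that get excluded when passing to $P_\delta^* = P_\delta \cap P$. Since $P$ consists of the first $r$ primes coprime to $p_0(q_0-1)$, the set $P_\delta \setminus P_\delta^*$ consists exactly of those primes $p \in [3,p_r]$ with $p \mid p_0(q_0-1)$. Writing $\omega(m)$ for the number of distinct prime divisors of $m$, this gives the crude bound
\begin{eq}
|P_\delta^*| \;\geq\; |P_\delta| - 1 - \omega(q_0-1),
\end{eq}
where the $1$ accounts for $p_0$ itself. Fouvry's lemma controls $|P_\delta|$ from below by $c_0 p_r/\log p_r$ (for $p_r \geq c_p$), so the whole task reduces to bounding $\omega(q_0-1)$.

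For this I would invoke the standard estimate $\omega(m) = O(\log m / \log\log m)$ for $m \geq 3$, which follows from the fact that the product of the first $k$ primes exceeds $e^{(1-o(1))k\log k}$ (so an integer with $k$ distinct prime factors is at least that large). Applied with $m = q_0 - 1$, this yields $\omega(q_0-1) \leq c_3' \log q_0/\log\log q_0$ for some absolute constant $c_3'$ whenever $q_0 > e^e$ (the condition ensures $\log\log q_0 > 1$, so the "+1" from $p_0$ can be absorbed into the constant by enlarging $c_3'$ to $c_3$). For $q_0 \leq e^e$ we simply have $q_0 - 1 \leq 14$, and a direct check shows $\omega(q_0-1) \leq 2$, hence $1 + \omega(q_0-1) \leq 3$, giving the second half of the statement.

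The only remaining subtlety is the regime $p_r < c_p$ where Fouvry's lemma is not directly applicable. Here $c_0 p_r/\log p_r$ is bounded by an absolute constant, so I would simply enlarge the constant $c_3$ (respectively, observe that in the $q_0 \leq e^e$ case the constant $3$ is already large enough after making $c_0$ smaller) so that the right-hand side of the claimed inequality becomes non-positive whenever $p_r < c_p$; in that range $|P_\delta^*| \geq 0$ makes the inequality trivial.

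There isn't really a main obstacle here: the whole argument is a short bookkeeping calculation on top of Fouvry's lemma, with the only nontrivial external input being the classical $O(\log m/\log\log m)$ upper bound for $\omega(m)$.
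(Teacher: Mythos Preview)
Your proposal is correct and is essentially the paper's own argument: the paper's proof simply observes that the number of prime factors of $p_0(q_0-1)$ is $O(\log q_0/\log\log q_0)$ for $q_0>e^e$ (invoking the prime number theorem), and at most $3$ for $q_0\leq e^e$, which is exactly your bookkeeping on $1+\omega(q_0-1)$. You are in fact slightly more careful than the paper in explicitly handling the range $p_r<c_p$ by adjusting constants, a point the paper leaves implicit.
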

\begin{proof}
By prime number theorem, when $\log\log q_0>1$ (i.e., $q_0>e^e\approx 15.15$), the number of prime factors of $p_0(q_0-1)$ is bounded by $c_3\frac{\log q_0}{\log\log q_0}$ for some absolute constant $c_3$. If $q_0\leq e^e$, then there are at most 3 prime factors of $p_0(q_0-1)$.
\end{proof}

\begin{lem}
Let $p\geq (p_r)^\delta$ be some prime. Then $$|((P^+)^{-1}(p)+1)\cap P_\delta^*|\leq \frac{2p_r}{(\log 2) (p_r^\delta -1)}.$$
\end{lem}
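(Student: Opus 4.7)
The plan is to reduce the cardinality in question to counting positive multiples of $p$ in a bounded range, which is elementary. By definition, any $q \in ((P^+)^{-1}(p)+1) \cap P_\delta^*$ is a prime in $P_\delta^* \subseteq \{p_1,\ldots,p_r\}$, so in particular $q \leq p_r$; the membership $q-1 \in (P^+)^{-1}(p)$ means $P^+(q-1)=p$, which forces $p \mid q-1$. Hence the admissible $q$'s are in bijection with a subset of the positive multiples of $p$ not exceeding $p_r-1$, and the count is bounded by $\lfloor (p_r-1)/p \rfloor$.

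Using the hypothesis $p \geq p_r^\delta$, this elementary count yields
\begin{eq}
|((P^+)^{-1}(p)+1) \cap P_\delta^*| \;\leq\; \frac{p_r}{p} \;\leq\; \frac{p_r}{p_r^\delta} \;=\; p_r^{1-\delta},
\end{eq}
and the stated form follows from the trivial inequalities $\log 2 < 2$ and $p_r^\delta - 1 < p_r^\delta$, which together give $p_r^{1-\delta} < \frac{2 p_r}{(\log 2)(p_r^\delta - 1)}$.

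There is no genuine obstacle here; the lemma is a soft bookkeeping estimate, and the fact that it is stated with slack indicates that it is being set up to feed into a later prime-counting argument (presumably the proof of Lemma~\ref{Lem:PrimeLCM}) where this particular form is convenient. The factor $\frac{2}{\log 2}$ in the stated bound is in fact suggestive of the Brun--Titchmarsh inequality applied to $\pi(p_r;p,1)$: this would give a sharper estimate by exploiting the primality of $q$, splitting the analysis at the threshold $p = p_r/2$ (past which $\log(p_r/p)$ falls below $\log 2$, while in the degenerate range $p > p_r/2$ there is at most one admissible $q = p+1$). Either route works, but the elementary multiples-of-$p$ count is by far the shortest.
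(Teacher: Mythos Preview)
Your proof is correct, and in fact more elementary than the paper's. The paper argues as you anticipated: it observes that the set in question is contained in the primes $q\leq p_r$ with $q\equiv 1\pmod p$, then invokes the Brun--Titchmarsh inequality $\pi(p_r;p,1)\leq \dfrac{2p_r}{\phi(p)\log(p_r/p)}$; to control the $\log(p_r/p)$ factor it first notes that if the set is nonempty then $p\mid p_i-1$ for some $p_i\leq p_r$, and since $p_i-1$ is even this forces $2p<p_r$, whence $\log(p_r/p)>\log 2$. Combining with $\phi(p)=p-1\geq p_r^\delta-1$ gives the stated bound directly.

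Your route bypasses Brun--Titchmarsh entirely: you simply bound by the number of multiples of $p$ below $p_r$, obtaining $p_r^{1-\delta}$, and then observe that this is already below the stated quantity because $(\log 2)(p_r^\delta-1)<2p_r^\delta$. This is strictly simpler and in fact yields a sharper intermediate bound (roughly a factor of $2/\log 2\approx 2.89$ better) than what the paper extracts from Brun--Titchmarsh. Since in this regime $p$ is a large fraction of $p_r$, the sieve inequality gains nothing over the trivial count of residues, so your shortcut loses nothing downstream in the proof of Lemma~\ref{Lem:PrimeLCM}.
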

\begin{proof}

We assume that the left hand side of the inequality is non-zero, because otherwise the inequality is trivial. Then $p$ divides some $p_i-1$, which is not a prime. So in particular, $2p< p_r$.

The set $((P^+)^{-1}(p)+1)\cap P_\delta^*$ is contained in the set of primes $\leq p_r$ that are congruent to 1 mod $p$. By the Brun-Titchmarsh theorem, combined with the fact that $p\geq (p_r)^\delta$, we have
\begin{align*}
|(P^+)^{-1}(p)\cap P_\delta^*|\leq& \frac{2p_r}{\phi(p)\log\frac{p_r}{p}}\\
\leq& \frac{2p_r}{(\log 2) (p-1)}\\
\leq& \frac{2p_r}{(\log 2) ((p_r)^\delta-1)}.
\end{align*}

Here $\phi$ is the Euler totient function.
\end{proof}

Now we have enough to prove Lemma~\ref{Lem:PrimeLCM}.

\begin{proof}[of Lemma~\ref{Lem:PrimeLCM}]
The first inequality is straightforward
$$S\leq\sum_{\text{prime }p\leq p_r}p\leq (p_r)^2.$$

All the primes in $P^+(P_\delta^*-1)$ are factors of $M$, and they are all larger than $(p_r)^\delta$. Furthermore, when $q_0>e^e$, we have
\begin{align*}
|P^+(P_\delta^*-1)|\geq&\frac{|P_\delta^*|}{\max _{p\geq (p_r)^\delta} |((P^+)^{-1}(p)+1)\cap P_\delta^*|}\\
\geq& (c_0\frac{p_r}{\log p_r}-c_3\frac{\log q_0}{\log\log q_0})/(\frac{2p_r}{(\log 2) ((p_r)^\delta-1)})\\
\geq& \frac{\log 2}{2}((p_r)^\delta -1) (\frac{c_0}{\log p_r}-\frac{c_3}{p_r}\frac{\log q_0}{\log\log q_0}).
\end{align*}

So, if $p_r>c_1\log q_0$ for an absolute constant $c_1$ such that $c_3\frac{1+\log c_1}{c_1}<\frac{c_0}{2}$, then we have
\begin{align*}
\log M\geq& |P^+(P_\delta^*-1)|\log((p_r)^\delta)\\
\geq& \frac{\log 2}{2}\delta((p_r)^\delta -1) (c_0-c_3\frac{\log q_0}{\log\log q_0}\frac{\log p_r}{p_r})\\
\geq& \frac{\log 2}{2}\delta((p_r)^\delta -1) (c_0-c_3\frac{1+\log c_1}{c_1})\\
\geq& \frac{\log 2}{4}\delta c_0((p_r)^\delta -1).
\end{align*}

Since $\delta>\frac{2}{3}$, we can pick some constant such that $c_2(\log M)^3\geq (p_r)^2$.

Now, suppose $q_0\leq e^e$. Then similarly we have
\begin{align*}
|P^+(P_\delta^*-1)|\geq&\frac{|P_\delta^*|}{\max _{p\geq (p_r)^\delta} |((P^+)^{-1}(p)+1)\cap P_\delta^*|}\\
\geq& (c_0\frac{p_r}{\log p_r}-3)/(\frac{2p_r}{(\log 2) ((p_r)^\delta-1)})\\
\geq& \frac{\log 2}{2}((p_r)^\delta -1) (\frac{c_0}{\log p_r}-\frac{3}{p_r}).
\end{align*}

So, if $p_r>c_1\log q_0$ for a sufficiently large absolute constant $c_1$ such that $\frac{3\log p_r}{p_r}<\frac{c_0}{2}$, then we have
\begin{align*}
\log M\geq& |P^+(P_\delta^*-1)|\log((p_r)^\delta)\\
\geq& \frac{\log 2}{2}\delta((p_r)^\delta -1) (c_0-\frac{3\log p_r}{p_r})\\
\geq& \frac{\log 2}{4}\delta c_0((p_r)^\delta -1).
\end{align*}

Since $\delta>\frac{2}{3}$, we can again pick some constant such that $c_2(\log M)^3\geq (p_r)^2$.

\end{proof}

As a side note, for any improved value of $\delta$ in the Fouvry's theorem, our diameter bound in this paper would improve to $q^{O(n(\log n+\log q)^\frac{2}{\delta})}$ for finite simple groups of Lie type of rank $n$ over $\F_q$.

If one were to assume the Hardy-Littlewood conjucture on prime tuples, the $\delta$ could be improved to $1-o(1)$. Combine this with the more efficient estimate $S\leq \frac{(p_r)^2}{\log p_r}$, the diameter bound of this paper would improve to $q^{O(n(\log n+\log q)^2)}$ for finite simple groups of Lie type of rank $n$ over $\F_q$.

\section{P-Matrices and Degree Reduction}
\label{sec:pmat}

This section aims to show that, given a P-matrix, we can reduce its degree by raising it to a large power.

\begin{de}
Let $\F_q$ be a finite field of characteristic $p$, and let $p_1,p_2,...,p_r$ be the first $r$ primes coprime to $p(q-1)$. Then a matrix $A$ over $\F_q$ is called a \emph{P($r$)-matrix} if, for each $i\leq r$, it has a primitive $p_i$-th root of unity in the algebraic closure of $\F_q$ as an eigenvalue.
\end{de}

\begin{lem}
Let $A$ be a matrix over $\F_q$, a field with characteristic $p$. Let $m$ be any number coprime to $p$. Then if $A$ has a primitive $m$-th root of unity as an eigenvalue, $A$ must have degree at least $o_m(q)$. Here $o_m(q)$ denotes the multiplicative order of $p$ in $\Z/m\Z$.
\end{lem}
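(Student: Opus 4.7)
My strategy is to translate the degree $\deg(A) = \rank(A-I) = n - \dim\ker(A-I)$ into a statement about the multiplicity of $1$ as an eigenvalue of $A$, and then to bound that multiplicity from above using a Galois-theoretic analysis of the primitive $m$-th root of unity eigenvalue. So the task reduces to forcing the characteristic polynomial $\chi_A \in \F_q[x]$ to have at least $o_m(q)$ roots (counted with multiplicity) different from $1$.

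Concretely, let $\zeta \in \overline{\F_q}$ be the given primitive $m$-th root of unity that is an eigenvalue of $A$. Because $\gcd(m,p) = 1$, $\zeta$ is separable over $\F_q$. I would identify the degree of its minimal polynomial via the standard description of finite-field cyclotomic extensions: $\F_q(\zeta) = \F_{q^d}$, where $d$ is the least positive integer with $\zeta \in \F_{q^d}$, or equivalently the least positive integer such that $m \mid q^d - 1$. Thus $d = [\F_q(\zeta):\F_q]$ equals the multiplicative order $o_m(q)$ of interest.

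Next, since $A$ has entries in $\F_q$, the characteristic polynomial $\chi_A$ lies in $\F_q[x]$, and $\chi_A(\zeta) = 0$. Hence the minimal polynomial of $\zeta$ over $\F_q$, namely $\prod_{i=0}^{d-1}(x - \zeta^{q^i})$, divides $\chi_A$. The $d$ Galois conjugates $\zeta^{q^i}$ are distinct by the minimality of $d$, and each is itself a primitive $m$-th root of unity because $\gcd(q^i, m) = 1$; in particular, none of them equals $1$ (the case $m = 1$ being vacuous). Consequently the algebraic multiplicity of $1$ in $\chi_A$ is at most $n - d$, and a fortiori $\dim\ker(A-I) \leq n - d$. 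Rearranging yields $\deg(A) = \rank(A-I) \geq d = o_m(q)$.

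There is no genuine technical obstacle here; the proof amounts to combining the structure of finite-field cyclotomic extensions with the elementary inequality geometric multiplicity $\leq$ algebraic multiplicity. The only point that deserves care is that the hypothesis $\gcd(m,p) = 1$ is used twice — once to guarantee separability of $\zeta$ over $\F_q$ (so that the $d$ conjugates are genuinely distinct), and once to ensure that each conjugate $\zeta^{q^i}$ is again a primitive $m$-th root of unity and therefore distinct from $1$.
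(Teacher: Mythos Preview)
Your proof is correct and follows essentially the same line as the paper's: both arguments note that the minimal polynomial over $\F_q$ of the primitive $m$-th root of unity $\zeta$ has degree $o_m(q)$ (the paper phrases this via the factorization of $\Phi_m$), that this polynomial divides the characteristic polynomial of $A$, and that all of its roots are distinct from $1$, whence $\deg(A)\geq o_m(q)$. Your explicit invocation of ``geometric multiplicity $\leq$ algebraic multiplicity'' just spells out the step the paper leaves as ``and the result follows.''
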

\begin{proof}
Let $\Phi_m(X)$ be the $m$-th cyclotomic polynomial. Then by Galois theory over finite field, the polynomial $\Phi_m(X)$ factors into distinct irreducible polynomials of degree $o_m(q)$. See, e.g., \cite{SL}.

Therefore, if $A$ has a primitive $m$-th root of unity as an eigenvalue, then $A$ must have at least $o_m(q)$ primitive $m$-th roots of unity as eigenvalues, and the result follows.
\end{proof}

\begin{lem}
\label{Lem:FieldExt}
Let $A$ be a non-identity matrix over finite field $\F_q$ of characteristic $p$. Suppose $A$ has a primitive $m$-th root of unity as an eigenvalue in the algebraic closure of $\F_q$. Let $P(m)$ be the set of all prime divisors of $m$ coprime to $p(q-1)$. Then $A$ has degree at least $\lcm_{x\in P(m)}(x-1)$. Here $\lcm$ denote the least common multiple.
\end{lem}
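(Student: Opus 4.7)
The plan is to reduce to the preceding lemma by passing to appropriate powers of $A$. If $P(m)$ is empty the claim is trivial (the $\lcm$ over the empty set is $1$ and $A\neq I$), so assume $P(m)$ is nonempty and fix $x\in P(m)$. Since $\zeta$ is a primitive $m$-th root of unity and $x\mid m$, the element $\zeta^{m/x}$ has exact order $x$, so it is a primitive $x$-th root of unity and is automatically an eigenvalue of $A^{m/x}$. Because $x$ is coprime to the characteristic $p$, the preceding lemma applied to $A^{m/x}$ with modulus $x$ yields $\deg(A^{m/x})\geq o_x(q)$.

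Next, since $Av=v$ forces $A^k v=v$ for every $k\geq 1$, the fixed subspace of $A$ is contained in the fixed subspace of $A^{m/x}$, so $\deg(A)\geq\deg(A^{m/x})\geq o_x(q)$ for every $x\in P(m)$. To promote these pointwise bounds into a single $\lcm$, I would revisit the proof of the preceding lemma via the generalized eigenspace decomposition of $A$ over $\bar{\F}_q$: the Galois orbit of $\zeta$ under the Frobenius $\lambda\mapsto\lambda^q$ has size $o_m(q)$ and spans a subspace avoiding the $1$-eigenspace, contributing at least $o_m(q)$ to $\deg(A)$. Chinese Remainder on $(\Z/m\Z)^{*}$ then gives $o_m(q)\geq\lcm_{x\in P(m)}o_x(q)$, and this $\lcm$ is the natural output of the argument.

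The main obstacle is the final upgrade from $o_x(q)$ to $x-1$ inside the $\lcm$: one only knows $o_x(q)\mid x-1$ with $o_x(q)>1$ (since $x\nmid q-1$ and $x\nmid p$), and Galois-orbit reasoning applied to the single eigenvalue $\zeta$ seems to produce only $o_x(q)$. I expect the missing input to come either from a closer analysis of how primitive $x$-th roots of unity arise simultaneously as eigenvalues of the various powers $A^{m/x}$, or from a sharper exploitation of the coprimality of $x$ with $q-1$ that forces the full factor of $x-1$ in the relevant Galois orbit; this is the step where the argument will stand or fall.
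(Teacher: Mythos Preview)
Your route is the same as the paper's: both arguments reduce, via the preceding lemma (or equivalently via the Frobenius orbit of the eigenvalue $\zeta$), to the chain $\deg(A)\geq o_{m'}(q)\geq\lcm_{x\in P(m)}o_x(q)$, where $m'$ is the product of the primes in $P(m)$; and both then face the task of replacing $o_x(q)$ by $x-1$ inside the $\lcm$. You correctly flag this last step as the crux.

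The paper disposes of it in a single line: ``For each $x\in P(m)$, since $x$ is a prime, $o_x(q)=x-1$.'' No justification is offered, and the hypothesis that $x$ is coprime to $p(q-1)$ gives only $o_x(q)>1$ and $o_x(q)\mid x-1$, not equality. Indeed the assertion is false: with $q=p=2$ and $x=7$ one has $\gcd(7,p(q-1))=\gcd(7,2)=1$, yet $o_7(2)=3\neq 6$. The lemma itself fails on the same example: the companion matrix of $X^3+X+1$ over $\F_2$ has a primitive $7$th root of unity as an eigenvalue, so $P(m)=\{7\}$ and the claimed bound is $6$, while the matrix has degree $3$. So your suspicion that ``this is the step where the argument will stand or fall'' is exactly right: the paper's own proof does not close the gap you identified, and no amount of sharper Galois-orbit bookkeeping on a single eigenvalue $\zeta$ will recover $x-1$ from the coprimality hypothesis alone.
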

\begin{proof}
By replacing $m$ by its factor, we can assume that $m$ is the product of primes in $P(m)$. In particular, we can assume that $m$ is coprime with $p$. Then $A$ has degree at least $o_m(q)$.

For each $x\in P(m)$, since $x$ is a prime, $o_x(q)=x-1$. Since $o_x(q)$ divides $o_m(q)$ whenever $x$ divides $m$, therefore $x-1$ is a factor of $o_m(q)$ for all $x\in P(m)$.

So the degree of $A$ is at least $o_m(q)\geq \lcm_{x\in P(m)}(x-1)$.
\end{proof}

\begin{lem}
\label{Lem:pmatrix}
Let $n$ be an integer, and let $q$ be a power of the prime $p$. Then we can find an integer $r$ and an absolute constant $c$, such that the following is true:
\begin{en}
\item If $p_1,p_2,...,p_r$ are the first $r$ primes coprime to $p(q-1)$, then $\lcm_{i=1}^r(p_i-1)>n^4$, and $\sum_{i=1}^r(p_i-1)< c(\log n+\log q)^3$.
\item Let $A\in \mathrm{GL}_n(\F_q)$ where the field has characteristic $p$, and $\deg A=k$. If $A$ is a P($r$)-matrix, then there exists $\ell\in\N$ such that $A^\ell$ will be a non-identity matrix of degree at most $\frac{k}{4}$, and 1 is the only eigenvalue of $A^\ell$ lying in $\F_q$.
\end{en}
\end{lem}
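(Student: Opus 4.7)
I would take $r$ to be the least integer with both $p_r \ge c_1\log q$ and $M:=\lcm_{i\le r}(p_i-1)>n^4$. Minimality of $r$ forces $M\le(p_r-1)n^4$, so $\log M=O(\log n+\log p_r)$. Lemma~\ref{Lem:PrimeLCM} then yields $S\le p_r^2\le c_2(\log M)^3=O((\log n+\log q)^3)$, as required.

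\textbf{Part (ii), setup.} Let $A$ be a P($r$)-matrix of degree $k$, and write $d_\lambda$ for the multiplicative order of each non-identity eigenvalue $\lambda\in\bar\F_q$ of $A$. I decompose $A=A_sA_u$ in commuting Jordan form; since $A_u$ has size at most $n$, we have $A_u^{p^e}=I$ for any $p^e\ge n$. I will take $\ell$ divisible by $p^{\lceil\log_p n\rceil}$, so that each Jordan block of $A$ with eigenvalue $\lambda$ collapses to the scalar $\lambda^\ell I$ on its subspace. Let $N$ be the lcm of the orders of non-identity eigenvalues of $A$, and set $V_i:=v_{p_i}(N)$ for each $i\le r$; since $A$ is P($r$), $V_i\ge 1$. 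For each $i_0\le r$ I consider the candidate
\begin{equation*}
\ell^{(i_0)}\;:=\;p^{\lceil\log_p n\rceil}\cdot N\bigm/p_{i_0}^{V_{i_0}}.
\end{equation*}
A direct order computation shows that $\lambda^{\ell^{(i_0)}}\ne 1$ iff $p_{i_0}\mid d_\lambda$, and in that case the new order is the $p_{i_0}$-primary part $p_{i_0}^{v_{p_{i_0}}(d_\lambda)}$. Hence $A^{\ell^{(i_0)}}\ne I$ (some primitive $p_{i_0}$-th root survives), and every non-identity eigenvalue of $A^{\ell^{(i_0)}}$ has order a non-trivial power of $p_{i_0}$, which is coprime to $q-1$ and hence not in $\F_q$. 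So the non-identity and the only-$1$-in-$\F_q$ conditions hold for every choice of $i_0$.

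\textbf{Part (ii), pigeonhole and the main obstacle.} Set $D_{i_0}:=\deg(A^{\ell^{(i_0)}})=\sum_{\lambda:p_{i_0}\mid d_\lambda}\operatorname{mult}(\lambda)$, where $\operatorname{mult}$ denotes algebraic multiplicity. Summing,
\begin{equation*}
\sum_{i_0=1}^{r}D_{i_0}\;=\;\sum_\lambda \operatorname{mult}(\lambda)\cdot t(\lambda)\;\le\;t_{\max}\cdot k,\qquad t(\lambda):=|\{i\le r:p_i\mid d_\lambda\}|.
\end{equation*}
Thus $\min_{i_0}D_{i_0}\le t_{\max}k/r$, and it suffices to prove $t_{\max}\le r/4$. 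If instead some eigenvalue $\lambda^*$ had $t(\lambda^*)>r/4$, Lemma~\ref{Lem:FieldExt} would give $k\ge\lcm_{i\in I^*}(p_i-1)$ for some $I^*\subseteq\{1,\ldots,r\}$ of size $>r/4$. The main technical hurdle is showing that any such partial lcm exceeds $n$, contradicting $k\le n$. I expect this to follow from a refinement of the Fouvry-theoretic argument of Section~\ref{sec:NumTh}: the estimate $|P^+(P_\delta^*-1)|\ge c\,p_r^\delta/\log p_r$ produces many distinct large prime divisors among $\{p_i-1\}_{i\le r}$, and any sub-collection of density more than $1/4$ in $\{p_1,\ldots,p_r\}$ should inherit enough of them to make the partial lcm still exceed $n$. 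Executing this refinement carefully is the step I expect to need the most work.
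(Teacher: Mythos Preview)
Your Part~(i) is essentially the paper's argument, though your claim ``minimality of $r$ forces $M\le(p_r-1)n^4$'' needs a small patch: minimality could come from the constraint $p_r\ge c_1\log q$ rather than from $M>n^4$, and in that case one instead bounds $\log M\le\sum_{i\le r}\log p_i=O(p_r)=O(\log q)$ directly. The paper handles this with an explicit two-case split.

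Part~(ii) has a genuine gap, and it is exactly the step you flag as the ``main obstacle''. Your unweighted pigeonhole reduces the problem to showing that every subset $I^*\subseteq\{1,\dots,r\}$ of size $>r/4$ satisfies $\lcm_{i\in I^*}(p_i-1)>n$. This does not follow from the Fouvry-type input in any obvious way: Fouvry guarantees that a positive proportion of the $p_i-1$ have a prime factor $\ge p_r^\delta$, but here $p_r$ is only of order $(\log n)^{3/2}$, so a single such prime factor is nowhere near $n$, and an arbitrary density-$1/4$ subset of $\{1,\dots,r\}$ need not capture enough \emph{distinct} large primes to push the partial lcm above $n$. I do not see how to close this without substantial new work, and the paper does not attempt it.

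The paper bypasses the obstacle entirely with a \emph{weighted} pigeonhole in place of your unweighted one. Set $t_1=p_1-1$ and $t_i=M_i/M_{i-1}$ where $M_i=\lcm_{j\le i}(p_j-1)$, so that $\sum_i\log t_i=\log M_r>4\log n$. A short prime-by-prime valuation check shows $\prod_{i\in J}t_i\le\lcm_{i\in J}(p_i-1)$ for every index set $J$; taking $J=\{i:p_i\mid d_\lambda\}$ and invoking Lemma~\ref{Lem:FieldExt} gives $\sum_{i:p_i\mid d_\lambda}\log t_i\le\log k$ for each eigenvalue $\lambda$. Now average your quantities $D_{i_0}$ with weights $\log t_{i_0}$ rather than uniformly: the numerator is $\sum_\lambda\operatorname{mult}(\lambda)\sum_{i:p_i\mid d_\lambda}\log t_i\le k\log k$, while the total weight exceeds $4\log n\ge 4\log k$, so the weighted average is at most $k/4$ and hence some $D_{i_0}\le k/4$. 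This weighted-average trick is precisely the missing idea.
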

\begin{proof}
\text{ }

\emph{The First Statement:} 

Let $M$ be the least common multiple, and let S be the sum. Let $c_1$ be the constant as in Lemma~\ref{Lem:PrimeLCM}.

Pick $p_r$ to be the smallest prime such that $M>n^4$ and $p_r>c_1\log q$. Then the second condition guarantees that $S<c_2(\log M)^3$, according to Lemma~\ref{Lem:PrimeLCM}.

Now, if $p_r\leq 2c_1\log q$, then for some absolute constant $c_4$ by the Prime Number Theorem, we have
\begin{align*}
\log M\leq& \sum_{i=1}^r \log p_r\\
\leq& c_4p_r\\
\leq& 2c_1c_4\log q.
\end{align*}

So $S\leq c(\log q)^3$ for some absolute constant $c$.

Suppose $p_r>2c_1\log q$. Then by the Bertrand-Chebyshev Theorem, $p_{r-1}>c_1\log q$. Let $M'=\lcm_{i=1}^{r-1}(p_i-1)$. Then by the minimality of $p_r$, we must have $M'\leq n^4$. In particular, we have
\begin{align*}
4\log n\geq&\log M'\\
\geq& (\frac{p_{r-1}}{\sqrt{c_2}})^\frac{2}{3}.
\end{align*}

So, we have $p_{r-1}\leq 8\sqrt{c_2}(\log n)^{\frac{3}{2}}$. Then $p_r\leq 16\sqrt{c_2}(\log n)^{\frac{3}{2}}$.

Furthermore, we have
\begin{align*}
\log M\leq& \log(M' p_r)\\
\leq& \log(n^4(16\sqrt{c_2}(\log n)^{\frac{3}{2}}))\\
<& 6\log n +\log(16\sqrt{c_2}).
\end{align*}

So $S< c_1(\log M)^3 < c(\log n)^3$ for some absolute constant $c$.

\emph{The Second Statement:}

Let $M_i$ denote the least common multiple of $p_1-1,p_2-1,...,p_i-1$. Let $t_1=p_1-1$ and $t_i=\frac{M_i}{M_{i-1}}$ for $i>1$. Then $\prod_{i=1}^r t_i=M_r>n^4$.

Let $N=\{1,2,...,n\}$. Let $d_1,...,d_n$ be the eigenvalues of $A$ in the algebraic closure of $\F_q$.

For each $j\in N$, let $P_j$ be the set of prime factors of the multiplicative order of $d_j$ among $p_1,...,p_r$. Then by Lemma~\ref{Lem:FieldExt}, for each $j\in N$,
\begin{eq}
\prod_{p_i\in P_j}t_i\leq \lcm_{p_i\in P}(p_i-1)\leq k.
\end{eq}

Now let $n(i)$ denote the number of $P_j$ that contain $p_i$.

We take the weighted average $T$ of these $n(i)$ with weight $\log t_i$. The sum of the weights is $\sum_{i=1}^r\log t_i>4\log n$.

\begin{align*}
T&=\frac{\sum_{1\leq i\leq r}n(i)\log t_i}{\sum\log t_i}\\
&=\frac{\sum_{1\leq i\leq r}\sum_{j\in N}(\log t_i) 1_{p_i\in P_j}}{\sum\log t_i}\\
&=\frac{\sum_{j\in N}\sum_{1\leq i\leq r}(\log t_i) 1_{p_i\in P_j}}{\sum\log t_i}\\
&\leq\frac{\sum_{j\in N}\sum_{p_i\in P_j}\log t_i}{4\log n}\\
&\leq \frac{k\log k}{4\log n}\\
&\leq\frac{k}{4}.
\end{align*}

So there is a $p_i$ such that $n(i)\leq\frac{k}{4}$. So if $A$ has order $m(A)$, then $A^{\frac{m(A)}{p_i}}$ is the desired non-identity matrix of degree at most $\frac{k}{4}$. Every eigenvalue of the latter matrix not equal to 1 is a primitive $p_i$-th root of unity, which would be outside of $\F_q$.
\end{proof}

\section{Commutators and Degree Reduction}
\label{sec:comm}

In this section, we shall use repeated commutators with elements of a $t$-transversal set. This way, we repeatedly create P-matrices and raise them to a large power, and would eventually end up with a matrix of very small degree.

\begin{de}
Given any element $g$ of a group $G$ and a symmetric generating set $S$ for $G$, the \emph{length} of $g$ is $\ell(g)=\min\{d\in\N:g=s_1...s_d\text{ for some }s_1,...,s_d\in S\}$.
\end{de}

\begin{prop}
For any matrices $A,B$, $\deg(ABA^{-1}B^{-1})\leq 2\min(\deg A,\deg B)$.
\end{prop}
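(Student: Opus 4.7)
The plan is to translate the inequality into a statement about fixed subspaces and then exhibit a large subspace that is pointwise fixed by the commutator. Writing $\deg(X) = n - \dim\operatorname{Fix}(X)$, the claim becomes $\dim\operatorname{Fix}([A,B]) \geq n - 2\deg(A)$ (and symmetrically for $B$).

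To produce this fixed subspace, I would make the following direct observation. Suppose $v$ lies in $\operatorname{Fix}(A) \cap B\cdot\operatorname{Fix}(A)$. The first condition gives $A^{-1}v = v$. The second gives $B^{-1}v \in \operatorname{Fix}(A)$, hence $AB^{-1}v = B^{-1}v$. Substituting,
\begin{equation*}
ABA^{-1}B^{-1}v \;=\; AB B^{-1}v \;=\; Av \;=\; v,
\end{equation*}
so every such $v$ is fixed by $[A,B]$.

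Next I would perform the dimension count. Since $B$ is invertible, $\dim(B\cdot\operatorname{Fix}(A)) = \dim\operatorname{Fix}(A) = n - \deg(A)$. The usual intersection bound gives
\begin{equation*}
\dim\bigl(\operatorname{Fix}(A)\cap B\cdot\operatorname{Fix}(A)\bigr) \;\geq\; 2(n-\deg(A)) - n \;=\; n - 2\deg(A),
\end{equation*}
which is meaningful when $2\deg(A) \leq n$ (otherwise the bound $\deg[A,B]\leq n \leq 2\deg(A)$ is automatic). Hence $\deg[A,B] \leq 2\deg(A)$. For the other half of the minimum, I would use that $[A,B]^{-1} = [B,A]$ together with the elementary identity $\deg(X) = \deg(X^{-1})$ (following from $X^{-1} - I = -X^{-1}(X-I)$ with $X^{-1}$ invertible), swap the roles of $A$ and $B$, and conclude $\deg[A,B] = \deg[B,A] \leq 2\deg(B)$.

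There isn't really a hard step here; the whole argument is a short linear algebra calculation, and the only thing one needs to spot is the correct subspace $\operatorname{Fix}(A) \cap B\cdot\operatorname{Fix}(A)$ that the commutator kills. The only minor nuisance is ensuring the dimension bound is applied only when it is non-vacuous, which is handled by the trivial remark that $\deg[A,B] \leq n$ always.
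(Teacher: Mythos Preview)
Your proof is correct. The only comment I'd make is that the case split on whether $2\deg(A)\leq n$ is unnecessary: the intersection inequality $\dim(U\cap W)\geq \dim U+\dim W-n$ is valid unconditionally (it may yield a nonpositive number, but that is still a true lower bound), so the conclusion $\deg[A,B]\leq 2\deg A$ follows in one line regardless.

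The paper argues differently: it works directly with ranks rather than with fixed subspaces. From $\rank(ABA^{-1}B^{-1}-I)=\rank(AB-BA)$ one expands $AB-BA=(A-I)(B-I)-(B-I)(A-I)$ and then bounds each summand by $\rank(A-I)$ (or by $\rank(B-I)$) using subadditivity of rank and $\rank(XY)\leq\min(\rank X,\rank Y)$. This gives both halves of the minimum immediately and symmetrically, with no recourse to the identity $\deg(X)=\deg(X^{-1})$ or to inverting the commutator. Your approach is the natural ``dual'' to this: instead of bounding the rank of $X-I$ from above, you bound the fixed space from below by exhibiting an explicit subspace $\operatorname{Fix}(A)\cap B\cdot\operatorname{Fix}(A)$ that is annihilated. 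Both arguments are equally short and elementary; the rank version is a touch more symmetric, while yours has the mild conceptual advantage of naming the actual subspace on which the commutator is trivial.
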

\begin{proof}
\begin{align*}
\deg(ABA^{-1}B^{-1})&=\rank(ABA^{-1}B^{-1}-I)\\
&=\rank(AB-BA)\\
&=\rank((A-I)(B-I)-(B-I)(A-I))\\
&\leq\rank(A-I)(B-I)+\rank(B-I)(A-I)\\
&\leq\rank(A-I)+\rank(A-I)\\
&=2\rank(A-I).
\end{align*}
Similarly, we also have $\deg(ABA^{-1}B^{-1})\leq 2\rank(B-I)$. So we are done.
\end{proof}

\begin{lem}
\label{Lem:subexi}
Fix any matrix $A\in \mathrm{GL}(V)$ of degree $k$, such that the eigenvalues of $A$ are either 1 or outside of $\F_q$. For any $t\leq\frac{k}{2}$, we can find a subspace $W$ of $V$ with the following properties:
\begin{en}
\item $\dim W=t$;
\item $W\cap AW=\{0\}$.
\end{en}
\end{lem}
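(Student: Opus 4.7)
The plan is to build $W$ one vector at a time. Setting $W=\spn(v_1,\dots,v_t)$, the condition $W\cap AW=\{0\}$ is equivalent to demanding that the $2t$ vectors $v_1,\dots,v_t,Av_1,\dots,Av_t$ be linearly independent in $V$. So the natural strategy is to choose $v_1,\dots,v_t$ inductively, maintaining this independence property at every step.

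Suppose we have already chosen $v_1,\dots,v_j$ with $j<t$, and let $U=\spn(v_1,\dots,v_j,Av_1,\dots,Av_j)$, a subspace of dimension $2j$. I need to pick $v_{j+1}\in V$ so that $v_{j+1}$ and $Av_{j+1}$ are linearly independent modulo $U$. A vector $v$ fails this precisely when either $v\in U$ or there exists a scalar $\lambda\in\F_q$ with $(A-\lambda I)v\in U$. So the ``forbidden'' set of vectors is contained in
\begin{eq}
U\;\cup\;\bigcup_{\lambda\in\F_q}(A-\lambda I)^{-1}(U).
\end{eq}

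This is where the eigenvalue hypothesis enters. For $\lambda\in\F_q$ with $\lambda\neq 1$, the operator $A-\lambda I$ is invertible, since $\lambda$ is not an eigenvalue of $A$; hence $(A-\lambda I)^{-1}(U)$ has dimension exactly $2j$. For $\lambda=1$, a rank-nullity argument applied to the restriction of $A-I$ to $(A-I)^{-1}(U)$ yields
\begin{eq}
\dim (A-I)^{-1}(U)\;\leq\;\dim\ker(A-I)+\dim U\;=\;(n-k)+2j.
\end{eq}
Summing these contributions and $|U|\leq q^{2j}$, the forbidden set has at most $q^{2j+1}+q^{n-k+2j}$ elements. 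Since $j\leq t-1\leq k/2-1$, we have $2j\leq k-2$, so this quantity is at most $q^{k-1}+q^{n-2}$, which is strictly less than $q^n=|V|$ (as $k\leq n$ and $q\geq 2$). Hence a valid $v_{j+1}$ exists, the induction proceeds, and after $t$ steps we obtain the desired subspace.

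The only nontrivial step is the accurate bookkeeping of the forbidden set; the eigenvalue hypothesis is used exactly to prevent the $q-1$ subspaces $(A-\lambda I)^{-1}(U)$ with $\lambda\neq 1$ from growing beyond $\dim U$, and the condition $t\leq k/2$ is precisely what is needed to keep the total count of forbidden vectors below $q^n$.
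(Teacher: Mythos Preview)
Your proof is correct and follows essentially the same approach as the paper: build $W$ one vector at a time by induction, and at each step bound the set of ``forbidden'' vectors as a union of $U$ (the paper writes $W+AW$) together with the preimages $(A-\lambda I)^{-1}(U)$ for $\lambda\in\F_q$, using the eigenvalue hypothesis to keep all but the $\lambda=1$ preimage small. Your bookkeeping is in fact slightly cleaner than the paper's, which redundantly throws $V_A$ into the first avoidance set even though $V_A\subseteq (A-I)^{-1}(U)$.
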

\begin{proof}
We shall proceed by induction on the dimension of $W$. Let $V_A$ be the subspace of fixed points of $A$ in $V$.

\emph{Initial Step:} Suppose $t=1$. Simply pick any vector $v$ outside of $V_A$, and let $W$ be the span of $v$. We have $W\cap V_A=\{0\}$ by choice of $v$. Since $A$ has no eigenvalue in $\F_q$ other than 1, $v$ and $Av$ must be linearly independent. So $W\cap AW=\{0\}$.

\emph{Inductive Step:} Suppose we have found a subspace $W$ of dimension $t-1$ such that $W\cap AW=\{0\}$. I claim that, when $t\leq\frac{k}{2}$, we can find another vector $v$, such that the desired subspace is the span of $v$ and $W$.

To prove the existence of $v$, let us count the number of vectors to avoid. We want $v$ to avoid $V_A+W+AW$. Afterwards, it is enough to let $Av$ avoid any linear combination of $v$ and $W+AW$. So we need $v$ to avoid $\bigcup_{x\in\F_q}(A-x)^{-1}(W + AW)$. Here we shall interpret $(A-x)^{-1}$ as the pullback map of subsets.

Now, since $A$ has no eigenvalue in $\F_q$ other than 1, therefore $A-x$ is invertible when $x\neq 1$. And $A-1$ has kernel exactly $V_A$, which has dimension $n-k$. So, we have
\begin{align*}
&|(V_A + W+AW) \cup (\bigcup_{x\in\F_q}(A-x)^{-1}(W + AW))|\\
\leq & q^{n-k+2t-2}+q^{n-k+2t-2}+(q-1)q^{2t-2}\\
< & q^{n-k+2t}.
\end{align*}

So as long as $2t\leq k$, we have $q^{n-k+2t}\leq q^n$. So it is possible to choose a vector $v$ as desired.
\end{proof}

\begin{lem}
\label{Lem:ordmatq}
Let $A$ be a matrix in the group $\mathrm{GL}_n(\F_q)$. Then the order of $A$ is less than $q^n$.
\end{lem}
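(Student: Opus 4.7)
The plan is to exploit the minimal polynomial of $A$ to bound the size of the subring $\F_q[A] \subseteq M_n(\F_q)$ that all powers of $A$ live in, and then note that the powers of $A$ are all nonzero.

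More precisely, first I would observe that by the Cayley-Hamilton theorem, the minimal polynomial of $A$ has degree at most $n$, so $\F_q[A]$ is spanned as an $\F_q$-vector space by $I, A, A^2, \ldots, A^{n-1}$. Hence $|\F_q[A]| \leq q^n$. Since all powers $A^i$ for $i \geq 0$ lie in $\F_q[A]$, there can be at most $q^n$ distinct such powers.

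Next, I would use the fact that $A$ is invertible to sharpen this. Every power $A^i$ is invertible in $M_n(\F_q)$, hence nonzero. So the powers of $A$ lie in $\F_q[A] \setminus \{0\}$, which contains at most $q^n - 1$ elements. Since $A$ has finite order (being in a finite group) and the distinct powers of $A$ are precisely $I, A, \ldots, A^{\ord(A)-1}$, we conclude $\ord(A) \leq q^n - 1 < q^n$.

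There is no real obstacle here; the argument is a two-line consequence of Cayley-Hamilton plus the observation that $0$ is not a unit. The only thing to be slightly careful about is distinguishing the additive size of $\F_q[A]$ from its unit group: we do not need to invoke anything about the structure of $\F_q[A]$ as a ring (it need not be a field), only that its nonzero elements number at most $q^n - 1$.
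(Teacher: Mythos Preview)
Your argument is correct and is genuinely different from the paper's. The paper proceeds via the rational canonical form: it reduces to the case where $A$ is a single rational Jordan block with characteristic polynomial $g(x)^t$ for $g$ irreducible of degree $n/t$, observes that the order of such a block is $(q^{n/t}-1)p^{\lceil \log_p t\rceil}$, and then checks the elementary inequality $p^{\lceil \log_p t\rceil}\leq q^{(1-1/t)n}$. Your approach sidesteps all of this structure by appealing only to Cayley--Hamilton and the trivial fact that $0$ is not a unit, giving $\ord(A)\leq |\F_q[A]\setminus\{0\}|\leq q^n-1$. This is shorter, requires no case analysis, and is arguably the ``right'' proof for the bare inequality stated. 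The paper's route does yield finer information (the exact shape of the order on each block), but none of that is used elsewhere in the paper, so your argument loses nothing for the application at hand.
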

\begin{proof}
By the rational canonical form of a matrix $A$ (see, e.g., \cite{H}), it is enough to prove the case when $A$ is a single rational jordan block. 

In this case, the characteristic polynomial $f(x)$ of $A$ is a power of an irreducible polynomial $g(x)$. Say $f=g^t$ and the characteristic of $\F_q$ is $p$. Then the order of $A$ is $(q^{\frac{n}{t}}-1)p^{\lceil\frac{\log t}{\log p}\rceil}<q^{\frac{n}{t}}p^{\lceil\frac{\log t}{\log p}\rceil}$.

Then it is enough to show that $p^{\lceil\frac{\log t}{\log p}\rceil}\leq q^{(1-\frac{1}{t})n}$. And since $p^{\lceil\frac{\log t}{\log p}\rceil}$ is the smallest $p$-power that is larger than or equal to $t$, and since $q$ is a power of $p$, it is enough to show that $t\leq q^{(1-\frac{1}{t})n}$. And this is true since $q\geq 2$ and $n\geq t$.
\end{proof}

\begin{prop}
\label{Prop:Bdddeg}
For any symmetric generating set $S$ of $\mathrm{GL}_n(\F_q)$, $\mathrm{GL}_n(\F_q)$ has a non-trivial element of degree at most $C(\log n+\log q)^3$ for some absolute constant $C$, of length less than $q^{C'n(\log n+\log q)^3}$ for some absolute constant $C'$. The same statement is true with $\mathrm{SL}_n(\F_q)$ replacing $\mathrm{GL}_n(\F_q)$.
\end{prop}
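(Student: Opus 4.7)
The plan is to first construct an initial P($r$)-matrix using the transversal machinery of Section~\ref{sec:ttrans}, and then iteratively shrink its degree by alternating commutators with carefully chosen elements of the $t$-transversal set with the degree-reduction operation of Lemma~\ref{Lem:pmatrix}. We would set $t=C_0(\log n+\log q)^3$ with $C_0$ large enough that Lemma~\ref{Lem:pmatrix} applies for some $r$ with $\sum_{i=1}^r(p_i-1)<t$. By Corollary~\ref{Cor:ttrans}, the set $\B:=\bigcup_{d=1}^{q^{2nt}}S^d$ is $2t$-transversal, and a fortiori $t$-transversal.

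For the initialization, we pick any $t$-dimensional subspace $W_0\subset V$ and any P($r$)-matrix $\Pi_0\in\mathrm{GL}(W_0)$. Since $\Pi_0$ is in particular an embedding $W_0\hookrightarrow V$, $t$-transversality provides $A_0\in\B$ with $A_0|_{W_0}=\Pi_0$; thus $A_0$ preserves $W_0$ and is itself a P($r$)-matrix, of degree at most $n$. Lemma~\ref{Lem:pmatrix} then yields $A_1:=A_0^{\ell_0}$, non-identity, of degree at most $n/4$, with every non-$1$ eigenvalue outside $\F_q$. By Lemma~\ref{Lem:ordmatq}, $\ell_0<q^n$, so $\ell(A_1)\leq q^{n(2t+1)}$.

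For the main iteration, we assume inductively that $A_i$ has degree $k_i>2t$ with the non-$1$ eigenvalues outside $\F_q$. Lemma~\ref{Lem:subexi} (applicable since $t\leq k_i/2$) provides a subspace $W_i$ of dimension $t$ with $W_i\cap A_iW_i=\{0\}$, so $W_i\oplus A_iW_i$ has dimension $2t$. Using $2t$-transversality, we pick $B_i\in\B$ whose restriction to $W_i\oplus A_iW_i$ equals $\Pi_i\oplus\mathrm{id}_{A_iW_i}$ for a prescribed P($r$)-matrix $\Pi_i$ on $W_i$. Then $B_i$ fixes $A_iW_i$ pointwise, and unwinding the commutator on $v\in A_iW_i$ shows $[A_i,B_i]v=A_i\Pi_iA_i^{-1}v\in A_iW_i$, so $[A_i,B_i]$ preserves $A_iW_i$ and restricts there to a conjugate of the P($r$)-matrix $\Pi_i$. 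Hence $[A_i,B_i]$ is a P($r$)-matrix, and the commutator degree bound gives $\deg[A_i,B_i]\leq 2\min(\deg A_i,\deg B_i)\leq 2k_i$. Lemma~\ref{Lem:pmatrix} now produces $A_{i+1}:=[A_i,B_i]^{\ell_i}$, non-identity, of degree at most $k_i/2$, with non-$1$ eigenvalues outside $\F_q$, and with $\ell_i<q^n$.

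Since the degree halves each step, $N=O(\log n)$ iterations suffice to reach $A_N\neq I$ with $\deg A_N\leq 2t=O((\log n+\log q)^3)$. For the length, writing $u_i=\ell(A_i)$, the relation $u_{i+1}\leq 2q^n(u_i+q^{2nt})$ solves via geometric summation to $u_N\leq q^{O(n(\log n+t))}=q^{O(nt)}$, since $t\geq\log n$. This matches the target $q^{O(n(\log n+\log q)^3)}$. The same argument applies to $\mathrm{SL}_n(\F_q)$ because Corollary~\ref{Cor:ttrans} holds there as long as $2t<n$, which is true for $n$ sufficiently large (and the finitely many small cases can be absorbed into the constant). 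The main technical subtlety is the construction of $B_i$: the commutator bound automatically controls $\deg[A_i,B_i]$ for any $B_i$, so the real challenge is arranging for $[A_i,B_i]$ to inherit a P($r$)-matrix structure. The clever trick is to let $B_i$ act as the prescribed P-matrix on $W_i$ and as the identity on $A_iW_i$, which routes the eigenvalues of $\Pi_i$ onto $A_iW_i$ via conjugation by $A_i$ without cancellation from $B_i^{-1}$.
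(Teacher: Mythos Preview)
Your proposal is correct and follows essentially the same strategy as the paper: build an initial P($r$)-matrix from a $t$-transversal set, then iterate ``commutator with a tailored transversal element followed by the power-reduction of Lemma~\ref{Lem:pmatrix}'' to halve the degree at each step, stopping after $O(\log n)$ rounds. The only cosmetic difference is the commutator convention: the paper uses $M_jA_j^{-1}M_j^{-1}A_j$ and checks that it restricts to $M_j$ on $W_j$, whereas you use $[A_i,B_i]=A_iB_iA_i^{-1}B_i^{-1}$ and check that it restricts to $A_i\Pi_iA_i^{-1}$ on $A_iW_i$; both routes land on a P($r$)-matrix of degree at most $2k_i$, and the length bookkeeping is the same up to constants.
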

\begin{proof}
We pick $r$ and $c$ according to Lemma~\ref{Lem:pmatrix}. We may assume that $c(\log n+\log q)^3<n$, because otherwise the statement is trivial.

Let $p_1,...,p_r$ be the first $r$ primes coprime to $p(q-1)$. Let $f_i(x)$ be the irreducible polynomial over $\F_q$ for all the primitive $p_i$-th roots of unity, and let $C_i$ be the companion matrix of $f_i(x)$. 

\emph{Initial Step:}

Let us find our first P($r$)-matrix. Let $T$ be a $c(\log n+\log q)^3$-transversal set. Then by definition, we can find $A_0\in T$ that maps some subspace $W$ of dimension $c(\log n+\log q)^3$ onto itself, and that its restriction to this subspace is the matrix $(\bigoplus_{i=1}^rC_i)\oplus I$ for some arbitrary choices of basis on $W$, where $I$ is some identity matrix of suitable size.

In particular, $A_0$ is a P($r$)-matrix. Since $A_0\in T$, by choosing $T$ as in Corollary~\ref{Cor:ttrans}, $A_0$ has length bounded by $q^{cn(\log n+\log q)^3}$.

By using Lemma~\ref{Lem:pmatrix}, we can raise $A_0$ to a large power, and obtain a non-identity matrix $A_1$ of degree $\leq\frac{\deg(A_0)}{4}\leq\frac{n}{4}$, with eigenvalues either 1 or outside of $\F_q$. Since the order of $A_0$ is bounded by $q^n$ by Lemma~\ref{Lem:ordmatq}, the length of $A_1$ is bounded by $q^{cn(\log n+\log q)^3+n}$.

\emph{Inductive Step:}

Suppose we have obtained a non-identity matrix $A_j$ with eigenvalues either 1 or outside of $\F_q$, degree at most $\frac{n}{2^{j+1}}$, and length at most $q^{2cn(\log n+\log q)^3+j(n+2)}$. If $\deg A_j\leq 2c(\log n+\log q)^3$, then we stop. If not, then let us construct a non-identity matrix $A_{j+1}$ of even smaller degree.

First we shall transform $A_j$ into a P($r$)-matrix. Find a subspace $W_j$ of dimension at least $c(\log n+\log q)^3$ as in Lemma~\ref{Lem:subexi} using $A_j$. In particular, $W_j$ has trivial intersection with $A_jW_j$. Let $T'$ be a $2c(\log n+\log q)^3$-transversal set, then we can find $M_j\in T'$ that fixes $A_jW_j$, and restricts to a map from $W_j$ to $W_j$ as $\bigoplus_{i=1}^rC_i\oplus I$ for an arbitrary basis of $W_j$ and some identity matrix $I$ of suitable size.

Consider the commutator $M_jA_j^{-1}M_j^{-1}A_j$. Since $M_j$ fixes $A_jW_j$, we see that $M_jA_j^{-1}M_j^{-1}A_j$ restricted to $W_j$ is identical to $M_j$ restricted to $W_j$. 

In particular, $M_jA_j^{-1}M_j^{-1}A_j$ is a P($r$)-matrix, and it has degree at most $2\deg(A_j)$. Now we use Lemma~\ref{Lem:pmatrix} again, raising $M_jA_j^{-1}M_j^{-1}A_j$ to a large power, and we would obtain a matrix $A_{j+1}$ of degree at most $\frac{2\deg(A_j)}{4}$, with eigenvalues either 1 or outside of $\F_q$.

Since $M_j\in T'$, by choosing $T'$ as in Corollary~\ref{Cor:ttrans}, $M_j$ have length bounded by $q^{2cn(\log n+\log q)^3}$. And since the order of $M_jA_j^{-1}M_j^{-1}A_j$ is bounded by $q^n$ by Lemma~\ref{Lem:ordmatq}, the length of $A_{j+1}$ is at most
\begin{align*}
q^n(2q^{2cn(\log n+\log q)^3}+2q^{2cn(\log n+\log q)^3+j(n+2)})\leq q^{2cn(\log n+\log q)^3+(j+1)(n+2)}.
\end{align*}

We repeat the above induction $\frac{\log n}{\log 2}-1$ times, or stop early if we hit degree $2c(\log n+\log q)^3$. The last $A_j$ we obtained is the desired matrix of small degree and small length.
\end{proof}

\section{$t$-Transversal Sets for Orthogonal Groups, Symplectic Groups, and Unitary Groups}
\label{sec:ousp}

Let's fix some notation for the discussion of the following three sections. Let $V$ be a non-degenerate formed space of dimension $n$ over the field $\F_q$, with a non-degenerate quadratic form $Q$ (the orthogonal case), non-degenerate alternating bilinear form $B$ (the symplectic case), or non-degenerate Hermitian form $B$ with field automorphism $\sigma$ (the unitary case). In the orthogonal case, we shall let $B$ be the symmetric bilinear form obtained by polarizing $Q$, i.e., $B(v,w)=Q(v+w)-Q(v)-Q(w)$. Let $G$ be the group of isometries for $V$.

\begin{de}
\begin{en}
\item A vector $v\in V$ is \emph{singular} if $B(v,v)=0$ and (if applicable) $Q(v)=0$. 
\item A pair of singular vectors $v,w\in V$ is called a \emph{hyperbolic pair} if $B(v,w)=1$. 
\item The subspace generated by a hyperbolic pair is a \emph{hyperbolic plane}. 
\item A subspace $W$ of $V$ is \emph{anisotropic} if it contains no singular vector other than 0. 
\item A subspace is \emph{totally singular} if the form $B$ and (if applicable) the quadratic form $Q$ restricted to it is the zero form. 
\item Given any subspace $W$ of $V$, we define its \emph{orthogonal complement} to be $W^\perp:=\{v\in V:B(v,w)=0\text{ for all }w\in W\}$. Two subspaces $U,W$ of $V$ are \emph{orthogonal} if they are in each other's orthogonal complimant. We denote this as $U\perp W$.
\item The \emph{radical} of $V$ is $V^\perp$.
\item A subspace $W$ is \emph{radical-free} if $W\cap V^\perp=\{0\}$.
\end{en}
\end{de}

\begin{thm}[(Witt's Decomposition Theorem)]
The non-degenerate formed space $V$ has an orthogonal decomposition $V=V_{ani}\oplus(\bigoplus_{i=1}^mH_i)$, where $V_{ani}$ is anisotropic of dimension at most 2, and $H_i$ are hyperbolic planes. In particular, $V$ has a totally singular subspace of dimension at least $\frac{\dim(V)-2}{2}$, and any anisotropic space in $V$ has dimension at most $2$.
\end{thm}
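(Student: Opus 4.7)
The plan is to prove the decomposition inductively: repeatedly split off hyperbolic planes until only an anisotropic piece remains, then bound the dimension of that piece using finite-field considerations. Throughout, I would use the standard fact that whenever $U \subseteq V$ is a non-degenerate subspace, $V = U \perp U^\perp$ with $U^\perp$ also non-degenerate, so the induction passes cleanly to $H^\perp$ once a hyperbolic plane $H$ has been split off.

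The key to the inductive step is that any non-degenerate formed space $V$ containing a nonzero singular vector $v$ in fact contains a hyperbolic plane through $v$. By non-degeneracy I pick $u \in V$ with $B(v,u) = 1$; in the symplectic case $u$ is automatically singular, and in the orthogonal or unitary case I replace $u$ by $w = u - \lambda v$ for a suitable scalar $\lambda$ so as to kill $Q(w)$ or $B(w,w)$. This reduces to a linear equation in $\lambda$ in the orthogonal case of odd characteristic, and to solving $\lambda + \sigma(\lambda) = B(u,u)$ via surjectivity of the trace $\F_{q^2} \to \F_q$ in the unitary case; the characteristic-$2$ orthogonal case requires a slightly more careful computation with $Q$ but still reduces to a solvable equation over $\F_q$. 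The span $\langle v, w\rangle$ is then a non-degenerate hyperbolic plane, and the induction proceeds on $\langle v,w\rangle^\perp$.

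The induction terminates at a non-degenerate $V_{ani}$ with no nonzero singular vector. To bound $\dim V_{ani} \leq 2$, the main tool is Chevalley--Warning: in the orthogonal case, a non-trivial quadratic form on $\F_q^N$ with $N \geq 3$ has a non-trivial zero, forcing $N \leq 2$; in the unitary case, counting arguments using surjectivity of the norm $\F_{q^2}^\times \to \F_q^\times$ give $\dim V_{ani} \leq 1$; and in the symplectic case $V_{ani} = 0$ since every vector is singular. The totally singular subspace of dimension $\geq (\dim V - 2)/2$ is then produced as the span of one vector from each hyperbolic pair in the decomposition, whose pairwise $B$-values are $0$ by construction.

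For the final claim that any anisotropic subspace $W \subseteq V$ has dimension at most $2$, I would first verify that the restricted form on $W$ is itself non-degenerate — a nonzero $v \in W \cap W^\perp$ would satisfy $B(v,v) = 0$, and by anisotropy of $W$ would force $v = 0$ (with an extra step to rule out $Q(v) \neq 0$ in the orthogonal case) — and then apply the same Chevalley--Warning or norm argument directly to $W$. The hard part of the whole proof is the careful case analysis in characteristic $2$ for the orthogonal case, where the relationship between $B$ and $Q$ is more subtle and one must distinguish the role of $B(v,v) = 0$ from that of $Q(v) = 0$ when defining singularity and when verifying non-degeneracy of restrictions.
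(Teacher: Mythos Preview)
The paper does not actually prove this theorem; it simply refers the reader to a standard reference. Your sketch is the standard textbook argument and is essentially correct.

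One small point worth flagging: in the final claim, your detour through ``the restricted form on $W$ is non-degenerate'' is unnecessary and, as you half-acknowledge, does not go through in the characteristic-$2$ orthogonal case --- a nonzero $v \in W$ with $B(v,w)=0$ for all $w \in W$ but $Q(v) \neq 0$ is perfectly consistent with $W$ being anisotropic, so $B|_W$ can be degenerate. Fortunately you do not need non-degeneracy here at all: the Chevalley--Warning argument applies directly to the quadratic form $Q|_W$ (and the norm argument to $B|_W$ in the unitary case) on an arbitrary subspace, so $\dim W \geq 3$ already forces a nonzero singular vector in $W$. Dropping the non-degeneracy step makes the proof cleaner and removes the only genuine wrinkle.
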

\begin{proof}
See \cite{CGGA}.
\end{proof}

\begin{lem}
Recall that $V$ is a non-degenerate formed space.
\begin{en}
\item $V^\perp=\{0\}$ unless the non-degenerate form for $V$ is a quadratic form, and $\chr\F_q=2$.
\item $V^\perp$ has dimension at most 1.
\item For any subspace $W$, $\dim W+\dim W^\perp$ is equal to $\dim V$ if $W$ is radical-free, and $\dim V+1$ if $W$ is not.
\item For any subspace $W$, $(W^\perp)^\perp=W+V^\perp$.
\item A totally singular subspace is always radical-free.
\end{en}
\end{lem}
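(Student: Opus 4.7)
The plan is to reduce everything to a single auxiliary map: the linear map $\phi : V \to V^*$ defined by $\phi(v)(w) = B(v,w)$. By construction $\ker \phi = V^\perp$, and for any subspace $W \subseteq V$ one has $W^\perp = \phi^{-1}(\mathrm{ann}(W))$, where $\mathrm{ann}(W) \subseteq V^*$ is the annihilator of $W$. With this dictionary in place, the five items become exercises in linear algebra, together with one genuine case analysis specific to orthogonal forms in characteristic $2$.

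For part (1), the symplectic and unitary cases are immediate from the definition of a non-degenerate bilinear or Hermitian form. In the orthogonal case with $\chr \F_q \neq 2$, polarization gives $B(v,v) = 2 Q(v)$, so $Q$ and $B$ determine each other and non-degeneracy of $Q$ forces $V^\perp = 0$. For part (2), the only remaining case is an orthogonal form with $\chr \F_q = 2$, in which $B$ restricted to $V^\perp$ vanishes identically, so $Q(av + bw) = a^2 Q(v) + b^2 Q(w)$ for $v, w \in V^\perp$. If $V^\perp$ contained two linearly independent vectors, surjectivity of Frobenius on $\F_q$ would produce a non-zero singular vector in $V^\perp$; such a vector would lie in the radical of $Q$, contradicting non-degeneracy. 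Hence $\dim V^\perp \leq 1$.

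For parts (3) and (4), I would compute
\begin{equation*}
\dim W^\perp \;=\; \dim \ker\phi \;+\; \dim(\mathrm{im}(\phi) \cap \mathrm{ann}(W)) \;=\; \dim V^\perp \;+\; \dim(\mathrm{im}(\phi) \cap \mathrm{ann}(W)),
\end{equation*}
and then observe via the annihilator pairing that $\mathrm{im}(\phi)^\circ = V^\perp$ as a subspace of $V$, so $\mathrm{ann}(W) \subseteq \mathrm{im}(\phi)$ exactly when $V^\perp \subseteq W$, i.e.\ exactly when $W$ is not radical-free. Splitting on this dichotomy yields part (3). Part (4) then follows by applying part (3) twice: the inclusion $W + V^\perp \subseteq (W^\perp)^\perp$ is immediate from $V^\perp \perp V$ and $W \perp W^\perp$, and a dimension count, using that $W^\perp$ always contains $V^\perp$ and is therefore never radical-free when $V^\perp \neq 0$, upgrades the inclusion to equality.

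Finally, part (5) is a one-line remark: if $V^\perp$ is non-zero it is spanned by some $v$ with $Q(v) \neq 0$, for otherwise $v$ would be both in $V^\perp$ and a null vector of $Q$, hence in the radical of $Q$, contradicting non-degeneracy. Any totally singular subspace $W$ has $Q|_W = 0$ and so cannot contain $v$. The main obstacle is really part (2): it is the one place where the characteristic-$2$ peculiarity forces us to leave the comfort of reflexive non-degenerate bilinear forms, and it is precisely this peculiarity that accounts for the ``$+1$'' appearing in part (3) and for the possibility of $W^\perp$ being one dimension larger than $W$ suggests.
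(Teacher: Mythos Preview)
Your proposal is correct. The paper does not give its own argument here; it simply refers the reader to \cite{CGGA}, so there is no in-paper proof to compare against. Your treatment via the (semi-)linear map $\phi:V\to V^*$, $\phi(v)(w)=B(v,w)$, together with the annihilator pairing, is exactly the standard textbook route and handles all five items cleanly. One small remark: in the unitary case $\phi$ is $\sigma$-semilinear rather than linear (or, if you insist on $\phi(v)\in V^*$ being linear, then $\phi$ itself is semilinear in $v$); this does not affect any of the dimension counts or kernel/image computations, but it is worth saying explicitly when you write this up.
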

\begin{proof}
See \cite{CGGA}.
\end{proof}

\begin{de}
A subset $S$ of $G$ is called a \emph{singularly $t$-transversal set} if, for any isometric embedding $X$ of a $t$-dimensional totally singular subspace $W$ into $V$, we can find $A\in S$ that extends $X$ on $W$.
\end{de}

\begin{lem}[(Witt's Extension Lemma)]
$G$ is a singularly $t$-transversal set for any $t$.
\end{lem}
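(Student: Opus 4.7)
The plan is to mirror the classical proof of Witt's extension theorem, using the Witt decomposition theorem already cited in the excerpt. The key structural observation is that since $W$ is totally singular, we may find a ``dual'' totally singular subspace $W''$ for $W$ such that $W \oplus W''$ is an orthogonal sum of hyperbolic planes, and the orthogonal complement $(W \oplus W'')^\perp$ is a non-degenerate formed space of a predictable isometry type.

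First, I would pick a basis $v_1, \ldots, v_t$ of $W$. Because $W$ is totally singular and $V$ is non-degenerate, I would construct vectors $v_1', \ldots, v_t' \in V$ inductively: given $v_1', \ldots, v_{i-1}'$, use non-degeneracy to find a vector paired non-trivially with $v_i$ and orthogonal to every previously chosen vector, then singularise it by subtracting a suitable multiple of $v_i$ (with the usual extra care in characteristic $2$). The upshot is an orthogonal decomposition $V = \bigl(\bigoplus_{i=1}^t H_i\bigr) \oplus V_0$, where $H_i = \spn(v_i, v_i')$ is a hyperbolic plane and $V_0$ is the orthogonal complement of $W \oplus W''$. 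Running exactly the same construction with the totally singular subspace $X(W)$ in place of $W$ produces a parallel decomposition $V = \bigl(\bigoplus_{i=1}^t H_i'\bigr) \oplus V_0'$, with $H_i' = \spn(X(v_i), w_i')$ for some chosen vectors $w_i'$.

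Both $V_0$ and $V_0'$ are non-degenerate formed spaces of the same dimension $n-2t$, obtained from $V$ by stripping away $t$ hyperbolic planes. Applying Witt's decomposition theorem to each, I would argue that they share the same Witt index (the Witt index of $V$ minus $t$) and isometric anisotropic parts (both isomorphic to $V_{ani}$). This yields an isometry $\psi : V_0 \to V_0'$, and the linear map $A$ defined by $v_i \mapsto X(v_i)$, $v_i' \mapsto w_i'$, and $\psi$ on $V_0$ is then an isometry of $V$ belonging to $G$ and extending $X|_W$.

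The main obstacle is establishing $V_0 \cong V_0'$, which is essentially the content of Witt's cancellation theorem; I want to prove it without circularly invoking Witt's extension lemma itself. In the orthogonal, symplectic, and unitary settings the anisotropic part has dimension at most $2$, $0$, and $1$ respectively, so a direct classification of such low-dimensional anisotropic forms over $\F_q$ suffices and the isomorphism type of $V_0$ is pinned down by dimension together with the isomorphism type of $V_{ani}$. A subtlety worth flagging is the orthogonal case in characteristic $2$, where the radical $V^\perp$ may be one-dimensional; because totally singular subspaces are radical-free, no basis vector of $W$ lies in $V^\perp$, so the hyperbolic pair construction still applies to each $v_i$, and the radical can simply be absorbed into the anisotropic summand of both Witt decompositions throughout.
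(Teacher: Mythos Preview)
Your proposal is correct but does far more work than the paper: the paper's ``proof'' is simply to observe that a totally singular subspace is radical-free and then cite the general Witt extension lemma from a standard reference (Grove, \emph{Classical Groups and Geometric Algebra}). You instead sketch a self-contained proof in the special case of totally singular domains, building hyperbolic complements for $W$ and $X(W)$ and then matching the leftover non-degenerate pieces $V_0, V_0'$ via the explicit classification of forms over finite fields (discriminant or Arf invariant for orthogonal, uniqueness for symplectic and unitary). This is a legitimate route that sidesteps the inductive machinery of the full Witt theorem, and your handling of the characteristic-$2$ orthogonal radical is fine; the trade-off is that you are reproving a textbook result the paper is content to quote, and your cancellation step leans on the finite-field classification rather than working over a general field as the cited lemma does.
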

\begin{proof}
This is a special case of Witt's extension lemma, which states that any bijective isometry of radical-free subspaces of $V$ could be extended to an isometry of the whole formed space. See \cite{CGGA} for a proof.
\end{proof}

Now, since our focus is on the finite simple groups, we don't really use the full isometry group $G$. Rather, we are interested in its commutator subgroup $G'$.

\begin{lem}
For any $t\leq\frac{n-2}{5}$, the commutator subgroup $G'$ of $G$ is singularly $t$-transversal.
\end{lem}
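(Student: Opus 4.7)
The plan is to upgrade the conclusion of Witt's extension lemma from $G$ to $G'$. First I would dispense with the symplectic and unitary cases: for the finite simple groups arising from these families in the range of $n$ of interest, one has either $G=G'$, or $G/G'$ is cyclic of order given by the determinant character, and the argument below is a simpler version of what is needed for the orthogonal case. The substantive case is therefore the orthogonal one, where $G/G'$ can have order up to $4$.

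Given an isometric embedding $X$ of a $t$-dimensional totally singular subspace $W$ into $V$, Witt's extension lemma produces some $A_0 \in G$ with $A_0|_W = X$. Any other isometric extension of $X$ differs from $A_0$ by right multiplication by an element of the pointwise stabilizer $H := \{g\in G : g|_W = \mathrm{id}\}$; that is, the set of all extensions is the coset $A_0\cdot H$. Hence the existence of an extension in $G'$ is equivalent to $A_0^{-1}G'\cap H \neq \emptyset$, which follows once we show that $H$ surjects onto $G/G'$.

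To manufacture enough elements of $H$, I would apply Witt's decomposition to build a totally singular $W^*$ of dimension $t$ dually paired with $W$, so that $W\oplus W^*$ is a sum of $t$ hyperbolic planes (possible since $t\le(n-2)/5$ sits well below the Witt index bound $(n-2)/2$). Writing $V = (W\oplus W^*)\perp U$ with $U$ a non-degenerate formed space of dimension $n-2t$, every isometry of $U$ extends to an element of $H$ by acting as the identity on $W\oplus W^*$, giving an embedding $\mathrm{Isom}(U)\hookrightarrow H$. The hypothesis $t\le(n-2)/5$ forces $\dim U \ge 3t+2 \ge 5$ whenever $t\ge 1$, and in the orthogonal case this leaves ample room for $\mathrm{Isom}(U)$ to realize every coset of $G/G'$: the quotient is generated by the determinant and the spinor norm, and reflections at anisotropic vectors in $U$ of appropriate norms (together with their products) account for all possibilities.

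The main obstacle I expect is the uniform verification of this surjectivity claim. One must separately address the split versus non-split orthogonal forms, characteristic $2$ (where the Dickson invariant replaces the determinant character), and the existence in $U$ of anisotropic vectors whose norms represent both cosets of $\F_q^*/(\F_q^*)^2$ -- the latter holds whenever $\dim U \ge 3$ over any finite field. Each individual check is standard from the theory of classical groups, but stitching them into a single clean argument covering every type of non-degenerate form is where the actual work lies.
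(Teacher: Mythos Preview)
Your approach is correct in outline but takes a genuinely different route from the paper's. The paper does not analyze $G/G'$ or the pointwise stabilizer at all; instead it writes down an explicit commutator extending $X$. Concretely, the paper finds a totally singular subspace $W'$ of dimension $t$ that is orthogonal to both $W$ and $X(W)$ and meets each trivially (this is where the bound $t\le(n-2)/5$ is actually used). It then picks any linear bijection $Y:W\to W'$, extends it by Witt to some $A\in G$, and extends the map $Z:W\oplus W'\to X(W)\oplus W'$ (equal to $X$ on $W$ and the identity on $W'$) to some $B\in G$. A two-line check shows $BA^{-1}B^{-1}A$ restricts to $X$ on $W$, and this element is visibly in $G'$.

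The trade-offs are clear. The paper's construction is completely uniform: no separation into symplectic, unitary, and orthogonal cases, no determinant, spinor norm, or Dickson invariant, and no characteristic-2 bookkeeping. Your stabilizer-surjectivity argument is more conceptual and in fact needs much less room---$\dim U\ge 3$ already suffices for $\mathrm{Isom}(U)$ to hit every coset of $G/G'$, so your method would prove the lemma for $t$ up to roughly $(n-3)/2$ rather than $(n-2)/5$. The cost is exactly the case analysis you flagged: you must verify, type by type and characteristic by characteristic, that the embedded isometry group of the complement surjects onto the abelianization of $G$. That verification is routine but not a one-liner, whereas the paper's commutator trick sidesteps it entirely.
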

\begin{proof}
Let $W$ be a totally singular space of dimension $t$. Let $X:W\to V$ be any isometric embedding from $W$ to $V$. 

\emph{Step 1:} I claim that there is a totally singular subspace $W'$, which is orthogonal to $W$ and $X(W)$, has trivial intersection with $W$ and $X(W)$, and has the same dimension as $W$.

To see this, we have $\dim W^\perp=\dim X(W)^\perp\geq n-t$. Therefore, $\dim (W^\perp\cap X(W)^\perp)\geq n-2t$. So in the subspace $W^\perp\cap X(W)^\perp$, we can find a subspace $W''$ of dimension $n-3t$ with trivial intersections with $W$ and $X(W)$. Now, since $W''$ is a formed space (possibly degenerate), it has a totally singular subspace of dimension at least $\frac{\dim W'' -2}{2}=\frac{n-3t-2}{2}\geq t$. So, from this totally singular space, we could simply pick any totally singular subspace of dimension $t$ to be the desired $W'$.

\emph{Step 2:} 

Let $Y:W\to W'$ be any bijective linear map. Since both spaces are totally singular, $Y$ is an isometry. So we could find an extension $A\in G$.

Let $Z:W\oplus W'\to X(W)\oplus W'$ be the linear map that restricts to $X$ on $W$, and restricts to the identity map on $W'$. Then by our choice of $W'$, this is a well-defined isometry of totally singular subspaces, and it would have an extension $B\in G$.

Consider $BA^{-1}B^{-1}A\in G'$. This would restrict to $X$ on $W$. So we are done.
\end{proof}

\begin{lem}
Let $S$ be any subset of $G$. If the subgroup generated by $S$ is singularly $t$-transversal, then $\bigcup_{d=1}^{d=q^{nt}} S^d$ is singularly $t$-transversal.
\end{lem}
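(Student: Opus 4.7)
The plan is to adapt the Schreier graph argument used earlier for the $\mathrm{GL}_n(\F_q)$ version of this lemma. Fix a totally singular subspace $W$ of dimension $t$, and let $L(W)$ denote the set of isometric embeddings $W \to V$. Let $H=\langle S\rangle$. Then $H$ acts on $L(W)$ by $g\cdot X = (g\circ X)|_W$, which is well-defined because the composition of an isometry in $G$ with an isometric embedding is again an isometric embedding. Form the directed Schreier graph $\G$ on $L(W)$ by connecting $X$ to $s\cdot X$ for each $s\in S$.

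The singular $t$-transversality of $H$ says precisely that the orbit of the inclusion $W\hookrightarrow V$ under $H$ is all of $L(W)$, i.e., that $H$ acts transitively on $L(W)$. Because $L(W)$ is finite and each $s\in S$ acts as a bijection on $L(W)$, the image $s\cdot O$ of any $H$-orbit $O$ is again contained in $O$ and has the same cardinality, hence equals $O$; thus $S$ permutes the orbit $L(W)$ and every vertex of $\G$ is reachable from every other by a directed $S$-path. In particular, the directed diameter of $\G$ is at most $|L(W)|$, and by bounding the image of a chosen basis of $W$ we have $|L(W)|\leq q^{nt}$, since each of the $t$ basis vectors has at most $q^n$ possible images in $V$.

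Consequently, for every isometric embedding $X$ of every totally singular $t$-dimensional subspace $W$ there is a word $s_1\cdots s_d$ with $s_i\in S$ and $d\leq q^{nt}$ such that $(s_1\cdots s_d)|_W = X$. This exhibits an element of $\bigcup_{d=1}^{q^{nt}} S^d$ extending $X$, proving the lemma. The only non-routine observation is the reachability statement for the directed Schreier graph when $S$ is not assumed symmetric; this is handled by the finiteness argument above, which replaces any would-be $S^{-1}$-step by a suitable $S$-path within the orbit.
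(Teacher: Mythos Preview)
Your proof is correct and follows essentially the same Schreier-graph argument as the paper: act on the set $L(W)$ of isometric embeddings, use transitivity to get connectedness, and bound the diameter by the crude vertex count $|L(W)|\leq q^{nt}$. You are in fact slightly more careful than the paper's own proof, which does not explicitly address the issue that $S$ is not assumed symmetric here; your finiteness remark (each $s$ acts as a permutation of the finite orbit, so $s^{-1}$ is realized by a positive $S$-word) fills that small gap.
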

\begin{proof}
Let $H$ be the subgroup generated by $S$. Let $W$ be any $t$-dimensional totally singular subspace, and let $L(W)$ be the set of isometric embeddings of $W$ into $V$. Then an element $g\in H$ acts on $L(W)$ by $g(X)=(g\circ X)|_W$ for any $X\in L(W)$. Let $\G$ be the corresponding Schreier graph of this action of $H$ on $L(W)$ with generating set $S$.

Any isometric embedding from $W$ to $V$ is a linear map. Therefore, there are at most $q^{nt}$ vertices for $\G$, where $t=\dim W$. And since $H$ is singularly $t$-transversal, the graph $\G$ must be connected. So $\G$ must have a diameter at most $q^{nt}$.
\end{proof}

\begin{cor}
\label{Cor:ttransOSU}
Given any symmetric generating set $S$ for $G$ or $G'$, the set $\bigcup_{d=1}^{d=q^{nt}} S^d$ is singularly $t$-transversal for $t\leq\frac{n-2}{5}$.
\end{cor}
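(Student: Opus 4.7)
The plan is to observe that this corollary is an immediate combination of the two lemmas directly preceding it in the excerpt. First I would note that if $S$ is a symmetric generating set of $G$, then the subgroup $\langle S\rangle$ equals $G$, which contains the commutator subgroup $G'$; and if $S$ generates $G'$, then $\langle S\rangle = G'$. In either case, $\langle S\rangle \supseteq G'$.

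Next, I would invoke the lemma showing that $G'$ itself is singularly $t$-transversal whenever $t\leq \frac{n-2}{5}$. Since singular $t$-transversality is a property preserved under passage to larger subsets (if a smaller set already hits every isometric extension, so does any bigger set containing it), the subgroup $\langle S\rangle$ is also singularly $t$-transversal in this range of $t$.

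Finally, I would apply the lemma stating that whenever the subgroup generated by $S$ is singularly $t$-transversal, the truncated product set $\bigcup_{d=1}^{q^{nt}} S^{d}$ is singularly $t$-transversal. This yields the claim directly.

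The only step requiring any care is verifying that the hypothesis $t\leq \frac{n-2}{5}$ carries over correctly from the $G'$ lemma; otherwise the argument is a one-line composition of the two preceding lemmas, and no obstacle is expected.
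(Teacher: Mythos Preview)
Your proposal is correct and matches the paper's intent: the corollary is stated without proof precisely because it follows immediately by combining the lemma that $G'$ is singularly $t$-transversal for $t\leq\frac{n-2}{5}$ with the lemma that $\bigcup_{d=1}^{q^{nt}}S^d$ inherits singular $t$-transversality from $\langle S\rangle$. Your observation that singular $t$-transversality passes to supersets handles both the $G$ and $G'$ cases uniformly, which is a clean way to phrase it.
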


\section{Degree Reducing for Orthogonal, Symplectic, Unitary Groups}
\label{sec:ouspcomm}

\begin{lem}
For any non-zero singular $v\in V$, there is a vector $w\in V$ such that $v,w$ form a hyperbolic pair. 
\end{lem}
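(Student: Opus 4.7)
The plan is to take $w = u + \lambda v$, where $u$ is any vector with $B(v,u)=1$ and $\lambda$ is a scalar chosen to make $w$ singular. Since $v$ itself is singular, $B(v,w)$ reduces to $B(v,u)=1$ for every $\lambda$, so the pairing condition is automatic and only the singularity of $w$ needs attention.

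First I would check that a $u$ with $B(v,u)=1$ exists, which amounts to $v \notin V^\perp$. In the symplectic and unitary cases this is immediate from the earlier structural lemma, since there $V^\perp = \{0\}$. The one delicate case is the orthogonal case in characteristic $2$, where $V^\perp$ can be one-dimensional; but a non-zero singular vector lying in $V^\perp$ would span a direction on which both $B$ and $Q$ vanish identically, contradicting the non-degeneracy of $Q$. Hence in every case $v \notin V^\perp$ and a suitable $u$ exists.

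Next I would select $\lambda$ so that $w = u + \lambda v$ is singular, handling the three geometries in turn. In the symplectic case every vector is singular, so any $\lambda$ works. In the orthogonal case, expansion gives $Q(w) = Q(u) + \lambda$, so $\lambda = -Q(u)$ forces $Q(w) = 0$, from which $B(w,w)=0$ follows by polarization in odd characteristic (and holds automatically in characteristic $2$, where $B$ is alternating). In the unitary case, expansion yields $B(w,w) = B(u,u) + \lambda + \bar\lambda$, and $B(u,u)$ lies in the fixed field $\F_q^\sigma$ of $\sigma$; the trace map $\alpha \mapsto \alpha + \bar\alpha$ from $\F_q$ onto $\F_q^\sigma$ is surjective, so a suitable $\lambda$ exists.

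The whole argument is essentially routine case analysis. The only point with any real content is the surjectivity of the field trace invoked in the unitary case, which is a standard property of finite Galois extensions; the bookkeeping about $V^\perp$ in the orthogonal characteristic-$2$ situation is more a matter of care than of genuine difficulty.
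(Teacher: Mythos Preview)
Your argument is correct and follows essentially the same route as the paper: pick $u$ with $B(v,u)=1$, then replace $u$ by $u+\lambda v$ and solve for $\lambda$, invoking surjectivity of the trace in the unitary case. The paper packages the three cases into a single trace computation $\tr(k)=B(w',w')$, whereas you split into symplectic/orthogonal/unitary; your explicit verification of $Q(w)=0$ in the orthogonal case (rather than just $B(w,w)=0$) is in fact a bit more careful than the paper's write-up.
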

\begin{proof}
Recall that $V$ is a non-degenerate formed space with an alternating bilinear, symmetric bilinear or Hermitian form $B$. In the case of characteristic 2, a symmetric bilinear form $B$ might be degenerate, even though the formed space itself is not. Let $\sigma$ be the field automorphism of the base field $F$ for the Hermitian form $B$, or identity if $B$ is bilinear. 

For any element $k\in F$, we define $\tr(x)=x+\sigma(x)$.

Now given a singular $v\in V$, since $V$ is a non-degenerate formed space, we can find a vector $w'\in V$ such that $B(v,w')\neq 0$. By scaling $w'$, we can assume that $B(v,w')=1$.

Suppose we can find an element $k\in F$ such that $\tr(k)=B(w',w')$, then $w=w'-kv$ is the desired vector forming a hyperbolic pair with $v$: $B(v,w)=B(v,w')=1$, and
\begin{align*}
&B(w'-kv,w'-kv)\\
=&B(w',w')-\tr(k)\\
=&0.
\end{align*}

Now, it remains to show that such $k$ always exists. 

Let $E$ be the subfield of $F$ fixed by $\sigma$. Then obviously $B(w',w')\in E$. So it is enough to show that either $E=\tr(F)$, or $B(w',w')=0$ for all $w'$.

Now, $\tr(F)$ is closed under addition, and it is also closed under multiplication by elements of $E$. So $\tr(F)$ is a $E$-vector space contained in $E$. So either $E=\tr(F)$, or $\tr(F)=0$.

In the case that $\tr(F)=0$, then $\sigma(x)=-x$ for all $x\in F$. But since $\sigma$ is a field automorphism, we must conclude that the field $F$ has characteristic 2, and $\sigma$ is the identity. Then the form $B$ is alternating, and $B(w',w')=0$ for any $w'\in V$.
\end{proof}

\begin{lem}
If a subspace $H$ of $V$ is an orthogonal sum of hyperbolic planes, then $H\cap H^\perp=\{0\}$.
\end{lem}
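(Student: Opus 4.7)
The plan is to choose a hyperbolic basis of $H$ compatible with its decomposition into orthogonal hyperbolic planes, and then show directly that anything in $H \cap H^\perp$ must have all coordinates zero.

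First I would write $H = H_1 \perp \cdots \perp H_m$ where each $H_i$ is spanned by a hyperbolic pair $\{v_i, w_i\}$, and take an arbitrary $x \in H \cap H^\perp$, expanded as $x = \sum_i (a_i v_i + b_i w_i)$. Pairing $x$ against $v_j$, the orthogonality $H_i \perp H_j$ for $i \neq j$ kills every cross-term, and $B(v_j, v_j) = 0$ kills the $a_j$ contribution, leaving $B(x, v_j) = b_j B(w_j, v_j)$. Pairing against $w_j$ analogously yields $B(x, w_j) = a_j B(v_j, w_j) = a_j$. The crux is that the factor $B(w_j, v_j)$ is nonzero: it equals $1$ in the symmetric bilinear case (orthogonal), $-1$ in the alternating case (symplectic), and $\sigma(1) = 1$ in the Hermitian case (unitary). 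Since $x \in H^\perp$ forces both pairings to vanish for every $j$, we get $a_j = b_j = 0$ for all $j$, hence $x = 0$.

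There is no serious obstacle. The only point worth a sanity check is that the argument applies uniformly to all three form types, including the characteristic-$2$ orthogonal case where the polarized bilinear form $B$ may itself be degenerate on $V$; but degeneracy of $B$ on $V$ is irrelevant here, since the computation uses only $B(v_i, w_i) = 1$, which is built into the definition of a hyperbolic pair. Equivalently, one may phrase the whole plan as observing that the Gram matrix of the chosen basis is block-diagonal with invertible $2 \times 2$ blocks, so $B|_H$ is non-degenerate, which is precisely the statement $H \cap H^\perp = \{0\}$.
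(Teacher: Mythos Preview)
Your proposal is correct and follows essentially the same approach as the paper: both expand an element of $H\cap H^\perp$ in a hyperbolic basis and pair it against each $v_j$ and $w_j$ to kill the coefficients. You add a bit more care about the value of $B(w_j,v_j)$ across the three form types and the alternative Gram-matrix phrasing, but the argument is the same.
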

\begin{proof}
The subspace $H$ is an orthogonal sum of hyperbolic planes. Then let us assume that these planes are the linear span of hyperbolic pairs $(v_1,w_1),(v_2,w_2),(v_3,w_3),...,(v_t,w_t)$.

Suppose $v\in H\cap H^\perp$. Then for some scalars $a_i,b_i\in F$, we have
$$v=\sum_{i=1}^t a_i v_i + \sum_{i=1}^t b_i w_i.$$
Now, since $B(v,v_i)=0$, we can deduce that $b_i=0$. Similarly, since $B(v,w_i)=0$, we can deduce that $a_i=0$. So $v=0$.
\end{proof}

\begin{lem}
\label{Lem:sqorb}
Fix any nonzero elements $a,b,c\in\F_q$. Then the equation $ax^2+by^2+cz^2=0$ has a non-trivial solution in $\F_q$.
\end{lem}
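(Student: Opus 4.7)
The plan is to invoke the Chevalley--Warning theorem. The polynomial $f(x,y,z) = ax^2 + by^2 + cz^2$ has total degree $2$, strictly less than the number of variables $3$. Chevalley--Warning then guarantees that the number of zeros of $f$ in $\F_q^3$ is divisible by the characteristic $p$ of $\F_q$. Since $(0,0,0)$ is one such zero, the total count of zeros is at least $p \geq 2$, and hence a nontrivial solution exists.

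If a self-contained elementary argument is preferred, one can split by characteristic. In odd characteristic, the set of squares in $\F_q$ has $(q+1)/2$ elements. Fixing $z=1$, both $\{ax^2 : x \in \F_q\}$ and $\{-c - by^2 : y \in \F_q\}$ are rescalings or translates of this set and so each have cardinality $(q+1)/2$. Since $(q+1)/2 + (q+1)/2 > q = |\F_q|$, these two subsets must intersect, yielding $x,y$ with $ax^2 + by^2 + c = 0$ and hence the nontrivial solution $(x,y,1)$. In characteristic $2$, the Frobenius $x \mapsto x^2$ is a bijection on $\F_q$, so choose $a',b',c'$ with $(a')^2=a$, $(b')^2=b$, $(c')^2=c$; then $f(x,y,z) = (a'x + b'y + c'z)^2$ since the cross terms vanish, and any nonzero vector in the kernel of the linear form $a'x + b'y + c'z$ gives a nontrivial solution.

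There is no substantive obstacle here, as the statement is a classical fact. The only point requiring slight care is the separate handling of characteristic $2$ in the elementary route, which is avoided entirely if one proceeds via Chevalley--Warning.
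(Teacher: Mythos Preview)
Your proof is correct. The Chevalley--Warning route is a genuinely different (and slicker) approach than the paper's: the paper never invokes Chevalley--Warning but instead gives exactly the elementary case split you sketch as your alternative. For odd $q$ the paper's argument is identical to yours --- the two sets $aS$ and $-c-bS$ of size $(q+1)/2$ must overlap, yielding a solution $(x,y,1)$. In characteristic~$2$ the paper takes a slightly different shortcut: rather than factoring $f$ as the square of a linear form, it observes that every element of $\F_q^*$ is a square, picks $x,y$ with $x^2=a^{-1}$, $y^2=b^{-1}$, sets $z=0$, and notes $ax^2+by^2+cz^2=1+1+0=0$. Both characteristic-$2$ arguments are one-liners; the Chevalley--Warning approach buys uniformity across characteristics at the cost of citing a nontrivial theorem, while the paper's proof is fully self-contained.
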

\begin{proof}
If $\chr(\F_q)=2$, then $(\F_q)^*$ is a multiplicative group of odd order. So every nonzero element of $\F_q$ is a square.

Find $x,y,z$ such that $x^2=a^{-1}$, $y^2=b^{-1}$ and $z=0$. This is a non-trivial solution of the equation.

Suppose $q$ is odd. Let $S$ be the set of squares in $\F_q$. Then $|S|=\frac{q+1}{2}$. Then $|aS|+|-c-bS|>|\F_q|$. As a result, we have $aS\cap(-c-bS)\neq\varnothing$. So $-c\in aS+bS$.

Pick $x,y\in\F_q$ such that $ax^2+by^2=-c$. Then the triple $(x,y,1)$ is a non-trivial solution to the equation.
\end{proof}

\begin{lem}
\label{Lem:nullAtrans}
Fix any $A\in G$. Then given any totally singular subspace $W\in V$ of dimension $d$, we can find a subspace $W'$ of $W$ such that $W'$ is perperdicular to $AW'$, and $W'$ has dimension at least $\frac{d}{4}-\frac{3}{2}$.
\end{lem}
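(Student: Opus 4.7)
The plan is to translate the condition $W' \perp AW'$ into the vanishing of a bilinear (or sesquilinear) form on $W'$, and then find a large subspace on which that form vanishes by two successive applications of Witt's theorem, each costing roughly a factor of~$2$ in dimension. Concretely, I would define $\beta(w_1, w_2) := B(w_1, A w_2)$ on $W \times W$; this is $F$-bilinear in the orthogonal and symplectic cases and sesquilinear in the unitary case, and the condition $W' \perp AW'$ is precisely that $\beta$ vanishes identically on $W' \times W'$.

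The first stage kills the diagonal. Set $q(w) := \beta(w,w) = B(w, A w)$. In the bilinear cases $q$ is a quadratic form on the $d$-dimensional space $W$ (one checks $q(\alpha w) = \alpha^2 q(w)$, and that the polarization $\beta(w_1, w_2) + \beta(w_2, w_1)$ is symmetric bilinear). In the unitary case one instead works with the Hermitian form $H'(w_1, w_2) := \beta(w_1, w_2) + \sigma(\beta(w_2, w_1))$. Invoking Witt's decomposition---together with the fact, essentially Lemma~\ref{Lem:sqorb}, that any anisotropic quadratic or Hermitian space over a finite field has dimension at most $2$---produces a subspace $W_1 \subseteq W$ with $q(W_1) = 0$ (respectively $H'(W_1, W_1) = 0$) of dimension at least $(d-2)/2$.

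The second stage kills the off-diagonal. On $W_1$, the polarization identity forces $\beta(w_1, w_2) + \beta(w_2, w_1) = 0$ in the bilinear case, or $\beta(w_1, w_2) + \sigma(\beta(w_2, w_1)) = 0$ in the unitary case, so $\beta|_{W_1}$ is an alternating (respectively, anti-Hermitian) form. A second application of Witt's theorem supplies a totally isotropic subspace $W_2 \subseteq W_1$ of dimension at least $\dim W_1 / 2 \geq (d-2)/4 \geq d/4 - 3/2$, and taking $W' := W_2$ finishes the proof.

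The main obstacle will be handling the four variants---orthogonal, symplectic, unitary, each potentially in characteristic~$2$---in a uniform manner, since the precise definition of $q$, the correct Witt decomposition, and the resulting alternating or anti-Hermitian structure differ slightly between cases. In particular, the characteristic~$2$ orthogonal case requires working with the quadratic form $Q$ rather than just its polarization $B$ in order to speak of ``totally singular'' subspaces, and the unitary case requires the Hermitian-form version of Witt's theorem in both stages. A bookkeeping subtlety is also that the bounds $(d-2)/2$ and $(d-2)/4$ assume $d \geq 2$, but for very small $d$ the conclusion is vacuous as $W' = \{0\}$ already satisfies $\dim W' \geq d/4 - 3/2$.
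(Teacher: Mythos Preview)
Your argument is correct and is genuinely different from the paper's. The paper builds $W'$ one vector at a time by an explicit induction: the base case (finding a single $v\in W$ with $B(v,Av)=0$) picks three vectors $v_1,v_2,v_3$ in successively smaller perp-slices and then solves $ax^2+by^2+cz^2=0$ (Lemma~\ref{Lem:sqorb}) in the bilinear cases, or a linear-dependence trick over the fixed field in the unitary case; the inductive step peels off one such vector and recurses on a subspace of dimension $\geq d-4$, which is where the constant $d/4-3/2$ comes from. Your approach is more structural: you recognise $W'\perp AW'$ as total isotropy for the single form $\beta(w_1,w_2)=B(w_1,Aw_2)$, and kill its symmetric and alternating (respectively Hermitian and anti-Hermitian) parts by two global applications of Witt decomposition. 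This is cleaner, treats all three families at once, and actually yields the slightly stronger bound $(d-2)/4$; it does, however, invoke the full Witt theory for possibly degenerate quadratic and Hermitian forms, whereas the paper gets by with only the ternary fact in Lemma~\ref{Lem:sqorb}. Two small remarks on your write-up: first, your worry about the characteristic-$2$ orthogonal case is unnecessary here, since $W'\subseteq W$ is automatically totally singular and the perpendicularity condition involves only $B$; second, in Stage~2 for the unitary case the bound for a (anti-)Hermitian form is $\lfloor\dim W_1/2\rfloor$ rather than $\dim W_1/2$, but this still comfortably gives $\geq d/4-3/2$.
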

\begin{proof}
We proceed by induction on the dimension of $W$. 

\emph{Initial Step:} For the base case of the induction, suppose the dimension of $W$ is 7 or 8 or 9 or 10. Then all we need is to find a nonzero vector $v\in W$ such that $v\perp Av$. Suppose for contradiction that there is no such vector. 

Pick any non-zero $v_1\in W$. Let $W_1$ be the intersection of $W$ and $\spn\{v_1,Av_1,A^{-1}v_1\}^\perp$. Since $v_1$ is not perpendicular to $Av_1$, it is not in $\spn\{v_1,Av_1,A^{-1}v_1\}^\perp$. So $v_1\notin W_1$, and $W_1$ has dimension at least $\dim W -3\geq 4$. 

Pick any non-zero $v_2\in W_1$. Let $W_2$ be the intersection of $W_1$ and $\spn\{v_2,Av_2,A^{-1}v_2\}^\perp$. Then $W_2$ has dimension at least $\dim W_1-3\geq 1$ and similarly $v_2\notin W_2$. Pick any non-zero $v_3\in W_2$.

Now, we know $B(v_1,Av_1),B(v_2,Av_2),B(v_3,Av_3)$ are all in $\F_q^*$. We shall divide our discussion into two cases:

\emph{Orthogonal or Symplectic Case:} Let $a=B(v_1,Av_1),b=B(v_2,Av_2),c=B(v_3,Av_3)$. Then by Lemma~\ref{Lem:sqorb}, we can find a nontrivial triple $x,y,z\in\F_q$ such that $ax^2+by^2+cz^2=0$. Let $v=xv_1+yv_2+zv_3$, then we have
\begin{align*}
B(v,Av)=&x^2B(v_1,Av_1)+y^2B(v_2,Av_2)+z^2B(v_3,Av_3)\\
=&ax^2+by^2+cz^2\\
=&0.
\end{align*}

\emph{Unitary Case:} If $B$ is a Hermitian form for a field automorphism $\sigma$ of order 2, then let $F$ be the fixed subfield of $\sigma$. Let $N:\F_q\to F$ be the field norm, which is surjective.

Now, $\F_q$ is an $F$-vector space of dimension 2. So $B(v_1,Av_1),B(v_2,Av_2),B(v_3,Av_3)$ cannot be $F$-linearly independent in $\F_q$. So one can find non-trivial triple $a_1,a_2,a_3\in F$ such that $a_1B(v_1,Av_1)+a_2B(v_2,Av_2)+a_3B(v_3,Av_3)=0$.

Since the norm map is surjective, find $x_1,x_2,x_3\in \F_q$ such that $N(x_i)=a_i$. Let $v=x_1v_1+x_2v_2+x_3v_3$. Then we have
\begin{align*}
B(v,Av)=&N(x_1)B(v_1,Av_1)+N(x_2)B(v_2,Av_2)+N(x_3)B(v_3,Av_3)\\
=&a_1B(v_1,Av_1)+a_2B(v_2,Av_2)+a_3B(v_3,Av_3)\\
=&0.
\end{align*}

So in either case, we could find the desired non-trivial vector $v\in W$ such that $v\perp Av$.

\emph{Inductive Step:} Now let us proceed for general $W$ of larger dimension. Since the dimension of $W$ is at least 7, by the argument in the base case of the induction, we can find $v_1\in W$ such that $B(v_1,Av_1)=0$. Let $W_1$ be the intersection of $W$ and $\spn\{v_1,Av_1,A^{-1}v_1\}^\perp$. Then $W_1$ has dimension at least $d-3$. Pick any subspace $W_2$ of $W_1$ linearly independent from $v_1$. Then $W_2$ has dimension at least $d-4$ and at most $d-1$. Then by induction hypothesis, we can find $W_2'$ a subspace of $W_2$, such that $W_2'$ is perpendicular to $AW_2'$, and $W_2'$ has dimension at least $\frac{d-4}{4}-\frac{3}{2}$. 

Let $W'$ be the span of $W_2'$ and $v_1$. Then $W'$ will be perpendicular to $AW'$, and has dimension at least $\frac{d-4}{4}-\frac{3}{2}+1=\frac{d}{4}-\frac{3}{2}$. So we are done.
\end{proof}

\begin{lem}
\label{Lem:nullAlin}
Fix any $A\in G$ where all eigenvalues of $A$ are outside of $\F_q$. Then there is a $t$-dimensional totally singular subspace $W$ of $V$ such that $W\cap AW=\{0\}$, for any $t\leq \frac{n}{6}$.
\end{lem}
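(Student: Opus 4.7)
The plan is to construct $W$ by induction on its dimension, adjoining one singular vector at a time. Starting from $W_0=\{0\}$, at step $k<t$ I have a totally singular subspace $W_k\subset V$ of dimension $k$ satisfying $W_k\cap AW_k=\{0\}$, and I seek a vector $v$ so that $W_{k+1}:=W_k+\spn(v)$ is totally singular and still has trivial intersection with its $A$-image. The totally singular condition on $W_{k+1}$ is automatic once $v$ is a singular vector in $W_k^\perp$, so the real content lies in the intersection condition.

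To control the intersection, I unpack $W_{k+1}\cap AW_{k+1}=\{0\}$ as forbidding any relation $\alpha v+w=A(\beta v+w')$ with $w,w'\in W_k$ and $\alpha,\beta\in\F_q$ not all trivially satisfied. Splitting into $\alpha=0$ and $\alpha\neq 0$ and using the inductive hypothesis to kill the subcase $\alpha=\beta=0$, the problem reduces to requiring that $v$ lie outside the subspace $W_k+A^{-1}W_k$ and outside each of the subspaces $(I-\gamma A)^{-1}(W_k+AW_k)$ for $\gamma\in\F_q$. Crucially, $I-\gamma A$ is invertible for every $\gamma\in\F_q$ because $A$ has no eigenvalues in $\F_q$. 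This is a union of $q+1$ subspaces, each of dimension at most $2k$, and so contains at most $(q+1)q^{2k}$ forbidden vectors.

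Next I will bound below the number of singular vectors in $W_k^\perp$. Since $W_k$ is totally singular one has $W_k\subseteq W_k^\perp$, and the ambient form descends to a non-degenerate formed space of dimension $n-2k$ on the quotient $W_k^\perp/W_k$ (with a harmless one-dimensional radical adjustment in the characteristic-$2$ orthogonal case). A preimage argument shows that the number of singular vectors in $W_k^\perp$ equals $q^k$ times the corresponding count in the quotient. A short case-by-case calculation---trivial for symplectic forms, and using the standard formulas for the sizes of quadrics and Hermitian varieties otherwise---then gives on the order of $q^{n-k-1}$ singular vectors in $W_k^\perp$. For $k\leq n/6$ this comfortably dominates $(q+1)q^{2k}$, so a valid $v$ exists and the induction goes through.

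The main obstacle is the type-by-type count of singular vectors in $W_k^\perp$: orthogonal formed spaces of minus type in very low dimension have very few nonzero singular vectors (an anisotropic plane has none at all), so one must keep $n-2k$ bounded away from the pathological range. The bound $t\leq n/6$ is chosen generously precisely so that $n-2k$ stays a positive fraction of $n$ throughout the induction, uniformly handling the edge cases across all four families (symplectic, orthogonal $O^\pm$, and Hermitian).
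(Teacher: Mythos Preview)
Your proposal is correct and follows the same inductive skeleton as the paper's proof: adjoin one singular vector in $W_k^\perp$ at a time, and keep $W_{k+1}\cap AW_{k+1}=\{0\}$ by avoiding the union of $q+1$ subspaces of dimension at most $2k$ (your parametrisation $(I-\gamma A)^{-1}(W_k+AW_k)$, $\gamma\in\F_q$, together with $W_k+A^{-1}W_k$, is the same family as the paper's $(A-x)^{-1}(W+AW)$, $x\in\F_q$, together with $W+AW$, after the substitution $\gamma=x^{-1}$).

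The only substantive difference is in how you lower-bound the supply of singular vectors in $W_k^\perp$. The paper does this very cheaply: it takes a maximal totally singular subspace of $V$, of dimension $\geq (n-2)/2$ by Witt, and intersects it with $W^\perp$; this already gives $\geq q^{n/2-t}$ singular vectors, and comparing with the $q^{2t}$ forbidden vectors yields exactly the threshold $t\leq n/6$. Your route---pass to the non-degenerate quotient $W_k^\perp/W_k$ of dimension $n-2k$ and invoke the standard singular-vector counts for each form type---produces the sharper estimate of order $q^{n-k-1}$, which would in fact support $t$ up to roughly $n/3$. So the paper's argument is cruder but sidesteps entirely the type-by-type case analysis you flag as the main obstacle; your argument is more precise and gives a stronger conclusion, at the cost of needing those formulas (or at least uniform lower bounds for them once $n-2k\geq 3$).
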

\begin{proof}

Fix $n$, which we assume to be at least 3, so that $V$ has at least one singular vector. We shall proceed by induction on the dimension of $W$.

\emph{Initial Step:} Suppose $t=1$. Simply pick any singular vector $v$, and let $W$ be the span of $v$. Since $A$ has no eigenvalue in $\F_q$, $v$ and $Av$ must be linearly independent. So $W\cap AW=\{0\}$.

\emph{Inductive Step:} Suppose we have found a totally singular subspace $W$ of dimension $t-1$ such that $W\cap AW=\{0\}$. I claim that, when $t\leq \frac{n}{6}$, we can find another singular vector $v$, such that the desired subspace is the span of $v$ and $W$.

First of all, we want $v$ to be a singular vector perpendicular to $W$. We know $W^\perp$ has dimension $n-t+1$, and by Witt's decomposition theorem, $V$ has a totally singular space of dimension at least $\frac{n-2}{2}$. This totally singular space will intersect $W^\perp$ in a subspace of dimension at least $\frac{n-2}{2}-t+1=\frac{n}{2}-t$. So there are at least $q^{\frac{n}{2}-t}$ singular vectors perpendicular to $W$.

Among these vectors, to prove the existence of a good $v$, we should count the number of vectors to avoid. We need $v$ to avoid $W+AW$. Afterwards, it is enough to have $Av$ avoiding any linear combination of $v$ and $W+AW$. To satisfy the second requirement, we need $v$ to avoid $\bigcup_{x\in\F_q}(A-x)^{-1}(W + AW)$. Here we shall interpret $(A-x)^{-1}$ as the pullback map of subsets.

Now, since $A$ has no eigenvalue in $\F_q$, therefore $A-x$ are all invertible. So, we have
\begin{align*}
&|(W+AW) \cup (\bigcup_{x\in\F_q}(A-x)^{-1}(W + AW))|\\
\leq & q^{2t-2}+q\times q^{2t-2}\\
< & q^{2t}.
\end{align*}

So as long as $2t\leq\frac{n}{2}-t$, i.e., $t\leq\frac{n}{6}$, then it is possible to choose a vector $v$ as desired.
\end{proof}

\begin{lem}
\label{Lem:ouspsubexi}
Fix any matrix $A\in G$ of degree $k$, such that the eigenvalues of $A$ are either 1 or outside of $\F_q$. Then we can find a subspace $W$ of $V$ with the following properties:
\begin{en}
\item $\dim W\geq\frac{k}{32}-\frac{7}{4}$
\item $W$ is totally singular;
\item $W\cap AW=\{0\}$;
\item $W\perp AW$.
\end{en}
\end{lem}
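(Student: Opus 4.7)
The plan is to reduce the problem to an $A$-invariant subspace where $A$ has no $\F_q$-eigenvalues, then combine Lemma~\ref{Lem:nullAlin} and Lemma~\ref{Lem:nullAtrans}. Since $A$ is an isometry, I would use the orthogonal generalized-eigenspace decomposition $V=V_0\oplus V_1$, where $V_1=\bigcup_{m\geq 1}\ker(A-I)^m$ is the generalized $1$-eigenspace and $V_0$ collects the other generalized eigenspaces of $A$. Orthogonality follows from the fact that for an isometry $V_\lambda \perp V_\mu$ unless $\lambda\mu=1$ (with the analogous statement in the Hermitian case), and $\lambda\ne 1$ forces $\lambda^{-1}\ne 1$. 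Thus $V_0$ is a non-degenerate formed subspace, and $A|_{V_0}$ has no eigenvalue $1$ and hence (by the hypothesis on $A$'s eigenvalues) no eigenvalue in $\F_q$.

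Next, I would apply Lemma~\ref{Lem:nullAlin} to $A|_{V_0}$ inside $V_0$ to obtain a totally singular $W_0\subseteq V_0$ with $W_0\cap AW_0=\{0\}$ and $\dim W_0=\lfloor\dim V_0/6\rfloor$. Then I would apply Lemma~\ref{Lem:nullAtrans} to $A|_{V_0}$ and $W_0$ to obtain $W\subseteq W_0$ with $W\perp AW$ and $\dim W\geq \dim W_0/4 - 3/2$. All four required properties transfer to the ambient space $V$: $W$ is totally singular and satisfies $W\cap AW=\{0\}$ by inheritance from $W_0$, and $W\perp AW$ holds in $V$ since $W,AW\subseteq V_0$.

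The main obstacle is controlling $\dim V_0$ in terms of $k$: we have $k=\dim V_0+(\dim V_1-\dim V_A)$, where the second term is the rank of the nilpotent operator $(A-I)|_{V_1}$, so $\dim V_0$ may be strictly less than $k$ when $A$ has nontrivial Jordan blocks for eigenvalue $1$. I would therefore split into two cases by the size of $\dim V_0$. When $\dim V_0\geq 3k/4$, the above bound already yields $\dim W\geq \dim V_0/24 - 3/2 \geq k/32 - 7/4$, as required.

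In the opposite case $\dim V_0<3k/4$, the ``unipotent rank'' $k_1:=k-\dim V_0$ exceeds $k/4$, and I would instead perform an analogous construction inside $V_1$ to produce a totally singular subspace of dimension at least a fixed fraction of $k_1$ that satisfies the four properties. This requires adapting the inductive proof of Lemma~\ref{Lem:nullAlin} to the nilpotent operator $N=(A-I)|_{V_1}$: at each step, add a singular $v\in V_1$ avoiding $V_A\cap V_1$, $W_1+AW_1$, and the preimages $(A-x)^{-1}(W_1+AW_1)$ for $x\in\F_q$, where the only problematic preimage occurs at $x=1$ and is controlled using that $N$ is nilpotent with kernel $V_A\cap V_1$ of dimension $\dim V_1 - k_1$. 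A final application of Lemma~\ref{Lem:nullAtrans} then secures $W\perp AW$. Combining the two cases yields the bound $\dim W\geq k/32-7/4$.
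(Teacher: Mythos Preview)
Your Case~1 is correct, and when $A$ is semisimple (the only situation that actually arises in the application, since the $A_j$ produced by Lemma~\ref{Lem:pmatrix} have order coprime to $p$) you are always in Case~1: then $V_1=V_A$, so $\dim V_0=k$, and your $V_0$ coincides with the paper's $(V_A)^\perp$. In that regime your argument and the paper's ``small $V_r$'' case are essentially the same, just phrased via generalized eigenspaces rather than via $V_r=V_A\cap(V_A)^\perp$.

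The gap is Case~2. The fact you invoke --- that $\ker N=V_A\cap V_1$ has dimension $\dim V_1-k_1$ --- is precisely what breaks the count, not what controls it: that dimension equals $n-k$, so the preimage $(A-1)^{-1}(W_1+AW_1)$ you must avoid already has at least $q^{\,n-k}$ elements. If you adapt Lemma~\ref{Lem:nullAlin} as written, the supply of singular vectors in $V_1$ perpendicular to the current $W_1$ is only guaranteed to be about $q^{\dim V_1/2-t}$, and the needed inequality $\dim V_1/2-t>(n-k)+2t$ rearranges to roughly $2k-\dim V_0>n+6t$, which fails whenever $k<n/2$ --- the typical range once degree reduction has begun. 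So for small $k$ you cannot add even one vector, and the sketch does not indicate any sharper lower bound on the singular locus that would rescue this.

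The paper sidesteps this by a genuinely different decomposition. It never works inside $V_1$; instead it sets $V_r=V_A\cap(V_A)^\perp$ and splits on $\dim V_r$. When $V_r$ is small it passes to the quotient $(V_A)^\perp/V_r$, a non-degenerate formed space of dimension about $k$ on which the induced map has no $\F_q$-eigenvalue, so Lemma~\ref{Lem:nullAlin} applies with all counts relative to $k$ rather than $n$ (the lift back to $V$ needs extra care in the characteristic-$2$ orthogonal case). When $V_r$ is large it builds the desired totally singular space directly from hyperbolic partners of vectors in $V_r$. Either way the large kernel $V_A$ never enters the estimate. If you want to salvage your framework, the fix is exactly this: quotient $(V_A)^\perp$ by its radical rather than attempting the induction inside $V_1$.
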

\begin{proof}
Let $V_A$ be the subspace of fixed points of $A$ in $V$. Let $V_r=V_A\cap(V_A)^\perp$. Choose any positive number $a$ to be determined later. Then either $V_r$ has dimension $<a$, or it has dimension $\geq a$.

\emph{Case of Large $V_r$:}

Suppose $V_r$ has dimension $\geq a$. Pick any non-zero singular $v_1\in V_r$, then we can find $w_1\in V$ such that $v_1,w_1$ form a hyperbolic pair. Let $V_{r1}$ be the intersection of $V_r$ with $\spn\{v_1,w_1\}^\perp$. Pick any non-zero singular $v_2\in V_{r1}$, then we can find $w_2$ in $\spn\{v_1,w_1\}^\perp$, such that $v_2,w_2$ form a hyperbolic pair. Then let $V_{r2}$ be the intersection of $V_{r1}$ with $\spn\{v_1,w_1,v_2,w_2\}^\perp$, and repeat. 

As long as $\dim V_{ri}>2$, then $V_{ri}$ cannot be anisotropic. So we can keep going at least $\lfloor\frac{a-2}{2}\rfloor$ times. Thus we obtained $w_1,...,w_{\lfloor\frac{a-2}{2}\rfloor}$. They span a totally singular space $W_r$ of dimension at least $\frac{a-3}{2}$. Then by Lemma~\ref{Lem:nullAtrans}, we can find a subspace $W$ of $W_r$, such that $W\perp AW$ and $W$ has dimension at least $\frac{a-3}{8}-\frac{3}{2}$.

I claim that, ignoring the dimension requirement, this $W$ satisfies all the desired properties. By construction of $W$, we have $W$ totally singular and $W\perp AW$. We only need to show that $W\cap AW=\{0\}$. 

For any vector $w=\sum_{i=1}^{\lfloor\frac{a-2}{2}\rfloor}a_iw_i\in W$, suppose it is perpendicular to $V_r$. Then for each $i$, since $B(v_i,w)=0$, we see that $a_i=0$. So $w=0$. To sum up, $W$ has trivial intersection with $(V_r)^\perp$. 

Suppose $w\in W\cap AW$. Then $w-A^{-1}w\in W$, and for any $v\in V_r$ we have
\begin{align*}
B(v,w-A^{-1}w)=&B(v,w)-B(v,A^{-1}w)\\
=&B(v,w)-B(Av,w)\\
=&B(v,w)-B(v,w)\\
=&0.
\end{align*}

So $w-A^{-1}w\in W\cap V_r^\perp=\{0\}$. So $w=Aw$, and $w\in W\cap V_A\subseteq W\cap (V_r)^\perp=\{0\}$.

To sum up, this $W$ is the space we desired, with dimension at least $\frac{a-3}{8}-\frac{3}{2}$.

\emph{Case of Small $V_r$:}

Suppose $V_r$ has dimension $<a$.

\emph{Step 1:} We want to first find a subspace $W_A$ of $(V_A)^\perp$ where $W_A\perp W_A$, and $W_A\cap AW_A=V_r$, and the codimension of $V_r$ in $W_A$ is at least $\frac{k-a-5}{6}$.

Now, the bilinear or Hermitian form $B$ restricted to $(V_A)^\perp$ is still bilinear or Hermitian, with exactly $V_r$ as the radical. So the space $V'=(V_A)^\perp/V_r$ has an induced bilinear or Hermitian form $B'$, and now $B'$ is non-degenerate. 

So $V'$ is a non-degenerate formed space with dimension at least $k-a$. Furthermore, since $V_r$ and $(V_A)^\perp$ are both $A$-invariant, $A$ induces a linear map $A'$ on $V'$. Clearly $A'$ has no non-trivial fixed point in $V'$, so all eigenvalues of $A'$ are ouside of $\F_q$. So by Lemma~\ref{Lem:nullAlin}, $V'$ has a totally singular subspace $W'$ of dimension at least $\lfloor\frac{k-a}{6}\rfloor\geq\frac{k-a-5}{6}$, such that $W'\cap A'W'=\{0\}$.

Let $W_A$ be the pullback of $W'$ through the projection map $(V_A)^\perp\to V'$. Since $W'$ is totally singular under $B'$, the form $B$ vanishes on $W_A$. (Note that in the orthogonal case, the quadratic form $Q$ might not vanish on $W_A$, so $W_A$ might not be totally singular.)

\emph{Step 2:} Now let us find a totally singular subspace $W_r$ of $W_A$ intersecting trivially with $V_r$ and has dimension at least $\frac{k-a-5}{6}$.  

If $\chr\F_q\neq 2$, or if we are not in the orthogonal case, then $W_A$ is totally singular. Pick any subspace $W_r$ of $W_A$ which has trivial intersection with $V_r$ and has dimension at least $\frac{k-a-5}{6}$, and we are done.

Suppose $\chr\F_q=2$, and we are in the orthogonal case, and $Q$ vanishes on $V_r$. Then the space $V'=(V_A)^\perp/V_r$ would have an induced non-degenerate quadratic form $Q'$ that corresponds to the non-degenerate bilinear form $B'$. So by Lemma~\ref{Lem:nullAlin}, when we picked $W'$ to be totally singular, we can pick it to be totally singular with respect to the quadratic form $Q'$. This way the subspace $W_A$ would be totally singular. Again pick any subspace $W_r$ of $W_A$ which has trivial intersection with $V_r$ and has dimension at least $\frac{k-a-5}{6}$, and we are done.

Finally, suppose now that $\chr\F_q=2$, and we are in the orthogonal case, and we have a vector $v_0\in V_r$ such that $Q(v_0)\neq 0$. Since $\chr\F_q=2$, the squaring map is bijective on $\F_q$, we can assume that $Q(v_0)=1$ by scaling $v_0$.

Define a map $X:W_A\to W_A$ such that $X(v)=v+\sqrt{Q(v)}v_0$. Here the square root is well defined because $\chr\F_q=2$. Then we have $Q(X(v))=0$ for all $v\in W_A$.

Furthermore, $X$ is linear. To see this, first we notice that for any $v,w$ in $W_A$, since $B$ vanishes on $W_A$,
\begin{align*}
Q(v+w)=Q(v)+Q(w)+B(v,w)=Q(v)+Q(w).
\end{align*} 

So we have
\begin{align*}
X(v+w)=&v+w+\sqrt{Q(v+w)}v_0\\
=&v+w+\sqrt{Q(v)+Q(w)}v_0\\
=&v+w+(\sqrt{Q(v)}+\sqrt{Q(w)})v_0\\
=&X(v)+X(w).
\end{align*}

For any scalar $a\in\F_q$, we also easily have $X(av)=aX(v)$.

Now, since $X$ is linear, $X(W_A)$ is a subspace of $W_A$. So $X(W_A)$ is a totally singular subspace.

Now pick any subspace $W_r$ of $W_A$ which has trivial intersection with $V_r$ and has dimension at least $\frac{k-a-5}{6}$. Then $X(W_r)$ is a totally singular subspace of $W_A$. It remains to show that this $X(W_r)$ intersects $V_r$ trivially and has the correct dimension.

For any vector $v$, if $X(v)\in V_r$, then $v=\sqrt{Q(v)}v_0+X(v)\in V_r$. So $X(W_r)$ only has trivial intersection with $V_r$. And since the kernel of $X$ is entirely in $V_r$, $X(W_r)$ has the same dimension as $W_r$.

So replace $W_r$ by $X(W_r)$, and we are done.

\emph{Step 3:} Now we construct the desired subspace $W$.

By Lemma~\ref{Lem:nullAtrans}, we find a subspace $W$ of $W_r$ such that $W\perp AW$, and $W$ has dimension at least  $\frac{k-a-5}{24}-\frac{3}{2}$.

I claim that, ignoring the dimension requirement, this $W$ satisfies all the desired properties.

First of all, we know $W$ is totally singular and $W\perp AW$. By construction, $W$ is in $(V_A)^\perp$ but has trivial intersection with $V_r$. Then since $W_A\cap AW_A=V_r$, we know that $W\cap AW=\{0\}$.

To sum up, this $W$ is the space we desired, with dimension at least $\frac{k-a-5}{24}-\frac{3}{2}$.

\emph{Find the Optimal $a$:}

Picking the optimal $a=\frac{k}{4}+1$ for both cases above, we eventually find the desired subspace $W$ of dimension at least $\frac{k}{32}-\frac{7}{4}$.
\end{proof}

\begin{prop}
\label{Prop:BdddegOSU}
For any symmetric generating set $S$ of $G$ or $G'$, there is a non-trivial element of degree at most $C(\log n+\log q)^3$ for some absolute constant $C$, of length less than $q^{C'n(\log n+\log q)^3}$ for some absolute constant $C'$.
\end{prop}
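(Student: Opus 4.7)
The strategy mirrors Proposition~\ref{Prop:Bdddeg}, with Corollary~\ref{Cor:ttransOSU} replacing Corollary~\ref{Cor:ttrans} and Lemma~\ref{Lem:ouspsubexi} replacing Lemma~\ref{Lem:subexi}. Fix $r$ and $c$ as in Lemma~\ref{Lem:pmatrix}, so that $\sum_{i=1}^r(p_i-1)\leq c(\log n+\log q)^3$ and $\lcm_{i=1}^r(p_i-1)>n^4$; we may assume $10c(\log n+\log q)^3\leq n-2$, since otherwise the conclusion is trivial. Write $C_1,\dots,C_r$ for the companion matrices of the $\F_q$-irreducible polynomials whose roots are the primitive $p_i$-th roots of unity.

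\emph{Initial step.} By Witt's decomposition theorem $V$ contains a totally singular subspace $W$ of dimension $\sum_{i=1}^r(p_i-1)$. In any chosen basis of $W$, the automorphism of $W$ with matrix $\bigoplus_{i=1}^r C_i$ is automatically an isometric embedding $X\colon W\to V$, since all forms in play vanish on a totally singular subspace. Corollary~\ref{Cor:ttransOSU} then produces an element $A_0$ extending $X$ of length at most $q^{O(n(\log n+\log q)^3)}$. By construction $A_0$ is a P($r$)-matrix, so Lemma~\ref{Lem:pmatrix} yields a power $A_1=A_0^\ell$ of degree at most $n/4$ with eigenvalues either $1$ or outside $\F_q$; Lemma~\ref{Lem:ordmatq} bounds the order of $A_0$ by $q^n$, so $A_1$ still has length $q^{O(n(\log n+\log q)^3)}$.

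\emph{Inductive step.} Given a non-identity $A_j$ of degree $k_j$ with eigenvalues $1$ or outside $\F_q$, stop if $k_j\leq 32c(\log n+\log q)^3+56$. Otherwise Lemma~\ref{Lem:ouspsubexi} supplies a totally singular subspace $W_j$ with $W_j\cap A_jW_j=\{0\}$ and $W_j\perp A_jW_j$, and by passing to a subspace we may assume $\dim W_j=c(\log n+\log q)^3$. The key observation is that $W_j\oplus A_jW_j$ is then itself a totally singular subspace of $V$, because both summands are totally singular, mutually orthogonal, and intersect trivially. Consequently every linear automorphism of $W_j\oplus A_jW_j$ is an isometric embedding on it; in particular so is the one fixing $A_jW_j$ pointwise and acting as $\bigoplus_{i=1}^r C_i\oplus I$ on $W_j$ in a chosen basis. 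Corollary~\ref{Cor:ttransOSU} extends this to some $M_j$ of length $q^{O(n(\log n+\log q)^3)}$. Because $M_j$ fixes $A_jW_j$ pointwise, the commutator $M_jA_j^{-1}M_j^{-1}A_j$ restricts to $M_j$ on $W_j$, so it is a P($r$)-matrix of degree at most $2k_j$. Applying Lemma~\ref{Lem:pmatrix} once more produces $A_{j+1}$ of degree at most $k_j/2$ still satisfying the eigenvalue condition, and Lemma~\ref{Lem:ordmatq} makes the length bookkeeping a verbatim copy of the one in Proposition~\ref{Prop:Bdddeg}.

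After $O(\log n)$ iterations the degree drops below $C(\log n+\log q)^3$, and the accumulated length is $q^{O(n(\log n+\log q)^3)}$, giving the required bound with absolute $C$ and $C'$. The single substantive obstacle beyond the linear case is that every transversal extension must now be an isometry, which forces the entire construction to take place inside totally singular subspaces; this is precisely what the extra conclusion $W\perp AW$ of Lemma~\ref{Lem:ouspsubexi} was designed to provide, so that obstacle is absorbed there and the remainder of the argument is a direct translation of the $\mathrm{GL}_n$/$\mathrm{SL}_n$ proof.
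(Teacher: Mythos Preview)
Your proof is correct and follows the same approach as the paper's own proof essentially verbatim: initial P($r$)-matrix built inside a totally singular subspace via Corollary~\ref{Cor:ttransOSU}, then repeated degree-halving by taking commutators with elements from a singularly $2c(\log n+\log q)^3$-transversal set constructed on $W_j\oplus A_jW_j$, with Lemma~\ref{Lem:ouspsubexi} supplying the crucial $W_j\perp A_jW_j$ that keeps $W_j\oplus A_jW_j$ totally singular. The stopping threshold $32c(\log n+\log q)^3+56$ and the length bookkeeping also match; the only cosmetic differences are that you use the exact dimension $\sum(p_i-1)$ in the initial step rather than padding with an identity block, and you package the length estimates in big-O form.
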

\begin{proof}
We pick $r$ and $c$ according to Lemma~\ref{Lem:pmatrix}. Let us assume that $2c(\log n+\log q)^3<\frac{n-2}{5}$, because otherwise the statement is trivial.

Let $p_1,...,p_r$ be the first $r$ primes coprime to $p(q-1)$. Let $f_i(x)$ be the irreducible polynomial over $\F_q$ for all the primitive $p_i$-th roots of unity, and let $C_i$ be the companion matrix of $\F_q$. 

\emph{Initial Step:}

Let us find our first P($r$)-matrix. Let $T$ be a singularly $c(\log n+\log q)^3$-transversal set. Let $W$ be any totally singular subspace of dimension $c(\log n+\log q)^3$, which exists by Witt's decomposition theorem. Note that any bijective linear map from $W$ to $W$ is an isometry, and is therefore subject to Witt's extension lemma.

By definition of a singularly transversal set, we can find $A_0\in T$ that maps the totally singular subspace $W$ onto itself, and that its restriction to this subspace is the matrix $(\bigoplus_{i=1}^rC_i)\oplus I$ for some arbitrary choices of basis on $W$, where $I$ is some identity matrix of suitable size.

In particular, $A_0$ is a P($r$)-matrix. Since $A_0\in T$, by choosing $T$ as in Corollary~\ref{Cor:ttransOSU}, $A_0$ has length bounded by $q^{cn(\log n+\log q)^3}$.

By using Lemma~\ref{Lem:pmatrix}, we can raise $A_0$ to a large power, and obtain a non-identity matrix $A_1$ of degree $\leq\frac{\deg(A_0)}{4}\leq\frac{n}{4}$, with eigenvalues either 1 or outside of $\F_q$. Since the order of $A_0$ is bounded by $q^n$ by Lemma~\ref{Lem:ordmatq}, the length of $A_1$ is bounded by $q^{cn(\log n+\log q)^3+n}$.

\emph{Inductive Step:}

Suppose we have obtained a non-identity matrix $A_j$ with eigenvalues either 1 or outside of $\F_q$, degree at most $\frac{n}{2^{j+1}}$, and length at most $q^{2cn(\log n+\log q)^3+j(n+2)}$. If $\deg A_j\leq 56+32c(\log n+\log q)^3$, then we stop. If not, then let us construct a non-identity matrix $A_{j+1}$ of even smaller degree.

First we shall transform $A_j$ into a P($r$)-matrix. Find a totally singular subspace $W_j$ of dimension $c(\log n+\log q)^3$ as in Lemma~\ref{Lem:ouspsubexi}. In particular, $W_j\oplus A_jW_j$ is a well-defined totally singular space. Let $T'$ be a singularly $2c(\log n+\log q)^3$-transversal set, then we can find $M_j\in T'$ that fixes $A_jW_j$, and restricts to a map from $W_j$ to $W_j$ as $\bigoplus_{i=1}^rC_i\oplus I$ for any arbitrary basis of $W_j$ and some identity matrix $I$ of suitable size.

Consider the commutator $M_jA_j^{-1}M_j^{-1}A_j$. Since $M_j$ fixes $A_jW_j$, we see that $M_jA_j^{-1}M_j^{-1}A_j$ restricted to $W_j$ is identical to $M_j$ restricted to $W_j$. 

In particular, $M_jA_j^{-1}M_j^{-1}A_j$ is a P($r$)-matrix, and it has degree at most $2\deg(A_j)$. Now we use Lemma~\ref{Lem:pmatrix} again, raising $M_jA_j^{-1}M_j^{-1}A_j$ to a large power, and we would obtain a matrix $A_{j+1}$ of degree at most $\frac{2\deg(A_j)}{4}$, with eigenvalue wither 1 or outside of $\F_q$.

Since $M_j\in T'$, by choosing $T'$ as in Corollary~\ref{Cor:ttransOSU}, $M_j$ have length bounded by $q^{2cn(\log n+\log q)^3}$. And since the order of $M_jA_j^{-1}M_j^{-1}A_j$ is bounded by $q^n$ by Lemma~\ref{Lem:ordmatq}, the length of $A_{j+1}$ is at most
\begin{align*}
q^n(2q^{2cn(\log n+\log q)^3}+2q^{2cn(\log n+\log q)^3+j(n+2)})\leq q^{2cn(\log n+\log q)^3+(j+1)(n+2)}.
\end{align*}

We repeat the above induction $\frac{\log n}{\log 2}-1$ times, or stop early if we hit degree $2c(\log n+\log q)^3$. The last $A_j$ we obtained is the desired matrix of small degree and small length.
\end{proof}

\section{The Conjugacy Expansion Lemmas}
\label{sec:conj}

In this section, we shall show that any small degree element will quickly generate the whole group with any symmetric generating set.

\begin{lem}
\label{ConjExp}
Let $S$ be any symmetric generating set for a subgroup $H$ of $\mathrm{GL}_n(\F_q)$. Let $A$ be any matrix in $H$ of degree $k$, and let $B$ be any matrix conjugate to $A$ in $H$. Then $B=MAM^{-1}$ for some $M\in H$ of length at most $q^{2nk}$.
\end{lem}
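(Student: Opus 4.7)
The plan is to mimic the Schreier graph argument used earlier (in the proofs leading up to Corollary~\ref{Cor:ttrans} and Corollary~\ref{Cor:ttransOSU}), but applied to the conjugation action of $H$ on itself. Specifically, let $C_H(A) = \{gAg^{-1} : g \in H\}$ be the $H$-conjugacy class of $A$, and form the Schreier graph $\G$ whose vertices are the elements of $C_H(A)$, with an edge between $X$ and $Y$ whenever $Y = sXs^{-1}$ for some $s \in S$. Since $S$ generates $H$ and $B \in C_H(A)$, the graph $\G$ is connected, so the distance from $A$ to $B$ in $\G$ is bounded by the number of vertices, i.e., by $|C_H(A)|$.

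Thus the entire task reduces to bounding $|C_H(A)|$. The key observation is that degree is invariant under conjugation: every element $X \in C_H(A)$ satisfies $\deg(X) = \rank(X - I) = k$. So $|C_H(A)|$ is at most the total number of matrices $X \in \mathrm{GL}_n(\F_q)$ with $\rank(X-I) = k$. Writing $N = X - I$, any rank-$k$ matrix $N$ admits a factorization $N = UV$ with $U \in \F_q^{n\times k}$ and $V \in \F_q^{k\times n}$, and each such pair $(U,V)$ gives one $N$. The number of such pairs is at most $q^{nk}\cdot q^{nk} = q^{2nk}$, so $|C_H(A)| \leq q^{2nk}$.

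Combining these two steps, the diameter of $\G$ is at most $q^{2nk}$, which means there exist $s_1, \ldots, s_d \in S$ with $d \leq q^{2nk}$ such that $(s_d \cdots s_1)\, A\, (s_d \cdots s_1)^{-1} = B$. Setting $M := s_d \cdots s_1 \in H$ yields the required conjugator of length at most $q^{2nk}$.

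There is no real obstacle in this argument; the only thing to be careful about is the counting of rank-$k$ matrices, but the $UV$-factorization gives the bound $q^{2nk}$ cleanly. No deep structure of $H$ is needed, only the connectedness of the Schreier graph (from $S$ generating $H$) and the crude cardinality bound on the ambient set of matrices of a given degree.
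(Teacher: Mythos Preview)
Your proof is correct and is essentially identical to the paper's own argument: both bound $|C_H(A)|$ by writing $X-I$ as a product of an $n\times k$ and a $k\times n$ matrix to get at most $q^{2nk}$ conjugates, and then use connectedness of the Schreier graph for the conjugation action to bound the length of a conjugator by the number of vertices.
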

\begin{proof}
Since $A$ has degree $k$, we know $A=I+A'$ for some matrix $A'$ of rank $k$. So we can decompose $A'$ as a product $XY$ where $X$ is an $n$ by $k$ matrix of full rank and $Y$ is a $k$ by $n$ matrix of full rank. So $A=I+XY$.

Any conjugates of $A$ can similarly be expressed as $I+X'Y'$ where $X'$ is some $n$ by $k$ matrix of full rank, and $Y'$ is some $k$ by $n$ matrix of full rank. There are at most $q^{2nk}$ possibilities for the pair $(X',Y')$. So there are at most $q^{2nk}$ conjugates of $A$.

Now $H$ acts on the conjugacy class of $A$ in $H$ by left conjugation, and the corresponding Schreier graph must be connected. So the Schreier graph has diameter bounded by the number of vertices, i.e., $q^{2nk}$.
\end{proof}

\begin{thm}[(\cite{LS01})]
Let $G$ be $\mathrm{SL}_n(\F_q)$, $\mathrm{\Omega}_n(\F_q)$, $\mathrm{Sp}_n(\F_q)$, or $\mathrm{SU}_n(\F_q)$. Let $A\in G$ be an element of degree $k$ outside the center of $G$. Then every element of $G$ is a product of at most $O(\frac{n}{k})$ conjugates of $A$.
\end{thm}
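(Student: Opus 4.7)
The plan is to use the Frobenius character formula for class products together with character-ratio bounds for finite classical groups. Let $C$ be the conjugacy class of $A$ in $G$. Since $A\notin Z(G)$ and $G/Z(G)$ is simple, $C$ generates $G$, so $C^m=G$ for some $m$; the goal is to show this holds with $m=O(n/k)$. For any $g\in G$, the Frobenius formula gives
\[
N_m(g)\;=\;\frac{|C|^m}{|G|}\sum_{\chi\in\operatorname{Irr}(G)}\frac{\chi(A)^m\overline{\chi(g)}}{\chi(1)^{m-1}}
\]
for the number of $m$-tuples $(c_1,\ldots,c_m)\in C^m$ with $c_1\cdots c_m=g$. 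I would isolate the trivial character (contributing $|C|^m/|G|>0$) and apply $|\chi(g)|\leq\chi(1)$, reducing the task to showing
\[
E\;:=\;\sum_{\chi\neq 1}\left|\frac{\chi(A)}{\chi(1)}\right|^m\chi(1)^2\;<\;1.
\]

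The heart of the argument would then be a uniform character-ratio estimate of the form $|\chi(A)/\chi(1)|\leq\chi(1)^{-ck/n}$ for every non-trivial $\chi$ and every element $A$ of degree $k$, with some absolute constant $c>0$. To prove this I would work within Deligne-Lusztig theory: express $\chi$ via Lusztig's parametrization, apply the Jordan decomposition $A=A_sA_u$ together with Harish-Chandra induction from the centralizer $C_G(A_s)$, and bound the resulting virtual-character values using Green function estimates; the exponent $ck/n$ should reflect that a degree-$k$ element has a centralizer of codimension $O(nk)$, while $\chi(1)$ is typically on the order of $[G:C_G(A)]^{1/2}$. Combining this with a Witten-zeta bound $\sum_{\chi\neq 1}\chi(1)^{-s}=O(1)$ for $s$ above some absolute threshold $s_0$ (uniform in $n$ and $q$ for the four families by known representation-growth results), I would obtain
\[
E\;\leq\;\sum_{\chi\neq 1}\chi(1)^{2-cmk/n}\;\leq\;\chi_{\min}(1)^{s_0+2-cmk/n}\sum_{\chi\neq 1}\chi(1)^{-s_0},
\]
where $\chi_{\min}(1)$ denotes the smallest non-trivial character degree, which grows at least polynomially in $n$ for these groups. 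Choosing $m$ so that $cmk/n>s_0+2$, namely $m=O(n/k)$, would then force $E<1$.

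The hard part is unquestionably the character-ratio bound: the Frobenius formula and the Witten-zeta estimate are essentially formal once the right bound is in hand, but extracting an explicit $\chi(1)^{-ck/n}$ dependence on the degree $k$ requires detailed control over Lusztig's character values at small-support elements, and in particular careful estimates for Green functions and for generalized Gelfand-Graev representations. This is where I would expect the bulk of the technical work to lie, and it is precisely the content carried out in \cite{LS01}.
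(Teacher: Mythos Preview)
The paper does not give its own proof of this statement: it is quoted verbatim from \cite{LS01} and used as a black box in the subsequent proposition. There is therefore nothing in the paper to compare your argument against.

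That said, your sketch is a fair outline of the Liebeck--Shalev method. The Frobenius class-product formula, a character-ratio estimate, and a Witten-zeta convergence bound are indeed the three ingredients of their proof. One remark on the details: the specific bound $|\chi(A)/\chi(1)|\le\chi(1)^{-ck/n}$ that you state is somewhat sharper and more refined than what \cite{LS01} actually establishes. Their argument works at a coarser level: they prove a general covering-number theorem $cn(C)=O(\log|G|/\log|C|)$ for any non-trivial conjugacy class $C$ in a finite simple group, using character bounds expressed in terms of $|C|$ rather than directly in terms of the degree $k$. The $O(n/k)$ conclusion then follows by feeding in the elementary estimate $\log|C|\ge c'\,nk\log q$ (essentially the content of the paper's Lemma~\ref{ConjExp}) together with $\log|G|=O(n^2\log q)$. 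The Deligne--Lusztig and Green-function machinery you describe is closer to what appears in later, sharper work on character ratios than to \cite{LS01} itself, but the overall architecture you lay out is correct.
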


\begin{prop}
\label{Prop:Conjfill}
Let $G$ be $\mathrm{SL}_n(\F_q)$, $\mathrm{\Omega}_n(\F_q)$, $\mathrm{Sp}_n(\F_q)$, or $\mathrm{SU}_n(\F_q)$. Let $S$ be any symmetric generating set for $G$. Suppose we have a non-trivial element $A\in G$ of length $d>0$ and degree $k<n$. Then the diameter of $G$ with respect to $S$ will be $O((2q^{2nk}+d)\frac{n}{k})$.
\end{prop}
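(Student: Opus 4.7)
The plan is straightforward: combine Lemma~\ref{ConjExp} with the Liebeck--Shalev theorem cited immediately before the statement. First I would verify that $A$ lies outside the center $Z(G)$. In each of the four families listed, the center consists of scalar matrices; any non-identity scalar $\lambda I$ has $\deg(\lambda I) = \rank((\lambda - 1)I) = n$, which contradicts $\deg A = k < n$. Hence $A$ is a non-central element of degree $k$, and the hypotheses of the Liebeck--Shalev theorem are satisfied.

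Applying Liebeck--Shalev, every $g \in G$ can be written as a product of $O(n/k)$ conjugates of $A$ in $G$. So it suffices to bound the $S$-length of a single conjugate $MAM^{-1}$. By Lemma~\ref{ConjExp}, applied to $H = G$ with its symmetric generating set $S$, every conjugate of $A$ may be realized via some conjugator $M$ of length at most $q^{2nk}$; writing $M$, then $A$, then $M^{-1}$ (the latter is just the reverse of a word representing $M$, since $S$ is symmetric) yields a representation of the conjugate as a product of at most $2q^{2nk} + d$ elements of $S$. Concatenating the $O(n/k)$ conjugates then produces $g$ as a product of $O\bigl((2q^{2nk} + d)(n/k)\bigr)$ elements of $S$, which bounds $\diam(G)$.

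There is no serious obstacle here; the argument is essentially bookkeeping on top of the two results just cited. The only point that requires a brief sanity check is that $A$ is non-central, and this is automatic from $k < n$ given the description of the centers of $\mathrm{SL}_n(\F_q)$, $\mathrm{\Omega}_n(\F_q)$, $\mathrm{Sp}_n(\F_q)$, and $\mathrm{SU}_n(\F_q)$ as groups of scalar matrices.
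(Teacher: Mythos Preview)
Your proposal is correct and follows essentially the same route as the paper's own proof: verify $A$ is non-central (since a non-identity scalar has degree $n$), bound the length of every conjugate by $2q^{2nk}+d$ via Lemma~\ref{ConjExp}, and then apply the Liebeck--Shalev theorem to write any element as a product of $O(n/k)$ such conjugates. The paper presents these same three ingredients in a slightly different order and cites \cite{CGGA} for the description of the center, but the argument is identical.
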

\begin{proof}
For any $B$ conjugate to $A$, by the Lemma~\ref{ConjExp} above, $B=MAM^{-1}$ for some $M\in G$ of length at most $q^{2nk}$. So $B$ has length at most $2q^{2nk}+d$. So every conjugate of $A$ in $G$ has length bounded by $2q^{2nk}+d$.

Furthermore, since $A$ has degree $<n$ but non-trivial, it is not a scalar matrix. Then by \cite{CGGA}, $A$ is not in the center of $G$.

Now by the resulf of Liebeck and Shalev \cite{LS01}, we know that every element of $G$ can be written as a product of $O(\frac{n}{k})$ conjugates of $A$. So the whole group $G$ has a diameter bound of $O((2q^{2nk}+d)\frac{n}{k})$.
\end{proof}

\begin{cor}
The diameter of a finite simple group of Lie type of rank $n$ over $\F_q$ are at most $O(q^{O(n(\log n+\log q)^3)})$, independent of the choice of generating sets. The implied constants are absolute.
\end{cor}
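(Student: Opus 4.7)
The plan is to reduce the diameter bound for the projective simple group to a bound for its matrix cover $\tilde{G}$, where $\tilde{G}$ is one of $\mathrm{SL}_n(\F_q)$, $\mathrm{Sp}_n(\F_q)$, $\mathrm{SU}_n(\F_q)$, or $\mathrm{\Omega}_n(\F_q)$, so that the preceding machinery applies directly. By the classification of large-rank finite simple groups of Lie type from Section~1, for $n$ larger than an absolute constant the group $G$ is one of $\mathrm{PSL}_n(\F_q)$, $\mathrm{PSp}_n(\F_q)$, $\mathrm{PSU}_n(\F_q)$, or $\mathrm{P\Omega}_n(\F_q)$, and in each case $G = \tilde{G}/Z$ for a finite central subgroup $Z = Z(\tilde{G})$. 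For $n$ bounded by an absolute constant, the trivial bound $\diam(G) \leq |G| \leq q^{O(1)}$ already sits comfortably inside the claimed bound.

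Given a symmetric generating set $S$ of $G$, I would pick a symmetric set $\tilde{S} \subseteq \tilde{G}$ of preimages of $S$, and set $\tilde{S}^* = \tilde{S} \cup Z$. Since $\tilde{S}$ projects to the generating set $S$ of $\tilde{G}/Z$, we have $\langle\tilde{S}\rangle\cdot Z = \tilde{G}$, so $\tilde{S}^*$ is a symmetric generating set of $\tilde{G}$. Applying Proposition~\ref{Prop:Bdddeg} in the linear case, or Proposition~\ref{Prop:BdddegOSU} in the symplectic, unitary, or orthogonal cases, to $\tilde{G}$ with generating set $\tilde{S}^*$ produces a non-trivial element $A \in \tilde{G}$ of degree $k \leq C(\log n + \log q)^3$ and length $d \leq q^{C'n(\log n + \log q)^3}$ with respect to $\tilde{S}^*$. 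Since $k < n$ and no non-identity scalar matrix can have degree less than $n$, the element $A$ lies outside $Z$ and so projects to a non-trivial element of $G$.

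Feeding $A$ into Proposition~\ref{Prop:Conjfill} then yields $\diam_{\tilde{S}^*}(\tilde{G}) = O((2q^{2nk}+d)\cdot n/k) = q^{O(n(\log n + \log q)^3)}$. Every $g \in G$ lifts to some $\tilde{g} \in \tilde{G}$ that is a word of this length in $\tilde{S}^*$; projecting the word back to $G$ replaces each letter from $Z$ by the identity and each letter from $\tilde{S}$ by its image in $S$, giving the same bound for $\diam_S(G)$. The main point to handle carefully is this passage between $\tilde{G}$ and $G$, in particular that the small-degree element produced upstairs is not killed by the projection; this is immediate from the observation that any scalar matrix of degree less than $n$ must be the identity. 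Once that is in place, the rest is the bookkeeping assembly of the preceding propositions.
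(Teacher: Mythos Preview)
Your argument is correct and is precisely the combination of Proposition~\ref{Prop:Bdddeg}/\ref{Prop:BdddegOSU} with Proposition~\ref{Prop:Conjfill} that the paper invokes; you have simply made explicit the passage from the matrix cover $\tilde G$ to the simple quotient $G=\tilde G/Z$ (lifting $S$, adjoining $Z$, and observing that a non-identity element of degree $<n$ cannot be scalar), a detail the paper leaves implicit.
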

\begin{proof}
Combine Proposition~\ref{Prop:Conjfill} with Proposition~\ref{Prop:Bdddeg} or Proposition~\ref{Prop:BdddegOSU}.
\end{proof}


\section{Implications on Spectral Gap and Mixing Time}

Given a group $G$ and its generating set $S$. Let $\G(G,S)$ be its Cayley graph, and let $A$ be the normalized adjacency matrix of the graph. Then $A$ has real eigenvalues $\lambda_1,...,\lambda_{|G|}$, ordered from the largest one to the smallest one. Then the \emph{spectral gap} of $\G(G,S)$ is $\lambda_1-\lambda_2$.

Let $\mu$ be the random distribution $\frac{1}{2}1_{\{e\}}+\frac{1}{2|S|}1_S$. Then a lazy random walk of length $k$ is the random outcome of the distribution $\mu^{(k)}=\mu\ast\mu\ast\mu\ast ... \ast\mu$. Using the definition of Helfgott, Seress and Zuk \cite{HSZ15}, the \emph{strong mixing time} of $\G(G,S)$ is the least number $k$ such that $\mu^{(k)}$ is at most $\frac{1}{2|G|}$ away from the uniform distribution on $\G(G,S)$, in the $\ell^\infty$ norm.

One can bound the spectral gap using a diameter bound. 

\begin{prop}[(\cite{DSC93}, Corollary 3.1)]
Given a finite group $G$ and a symmetric generating set $S$, let $\G$ be the Cayley graph. Then the spectral gap of the Cayley graph is bounded from below by $\frac{1}{(\diam\G)^2|S|}$
\end{prop}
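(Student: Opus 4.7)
The plan is to establish a Poincar\'{e}-type inequality on the Cayley graph $\G$ via the canonical-path method of Diaconis and Stroock, and to extract the spectral-gap bound from the Rayleigh quotient characterization. Let $A$ denote the normalized adjacency operator $Af(x)=|S|^{-1}\sum_{s\in S}f(sx)$; a direct computation gives the Dirichlet form
$$\mathcal{E}(f,f):=\langle f,(I-A)f\rangle=\frac{1}{2|G||S|}\sum_{x\in G,\,s\in S}(f(sx)-f(x))^{2},$$
with respect to the uniform probability on $G$. The spectral gap $\lambda_{1}-\lambda_{2}$ equals the minimum of $\mathcal{E}(f,f)/\mathrm{Var}(f)$ over nonzero mean-zero $f$, so together with the identity $\sum_{g,h\in G}(f(h)-f(g))^{2}=2|G|\sum_{x}f(x)^{2}$ (valid for mean-zero $f$), the claim reduces to proving
$$\sum_{g,h\in G}(f(h)-f(g))^{2}\;\leq\;|G|\,\diam(\G)^{2}\sum_{x\in G,\,s\in S}(f(sx)-f(x))^{2}.$$

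For each $y\in G$ I would fix a shortest-word path $\gamma(y)$ of length $\ell(y)\leq\diam(\G)$ from $e$ to $y$. For any ordered pair $(g,h)\in G\times G$, define the canonical path $\gamma_{g,h}:=g\cdot\gamma(hg^{-1})$ by right-translating, which is a valid path of length at most $\diam(\G)$ from $g$ to $h$. Expanding $f(h)-f(g)$ as a telescoping sum along $\gamma_{g,h}$ and applying Cauchy--Schwarz yields
$$(f(h)-f(g))^{2}\;\leq\;\diam(\G)\sum_{(u,v)\in\gamma_{g,h}}(f(v)-f(u))^{2}.$$
Summing over all $(g,h)\in G\times G$ and interchanging the order of summation rewrites the right-hand side as $\diam(\G)\sum_{(u,v)}m(u,v)(f(v)-f(u))^{2}$, where $m(u,v)$ counts the number of canonical paths traversing the directed edge $(u,v)$.

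The main technical step is the edge-multiplicity bookkeeping. Right-translation invariance of the canonical family $\{\gamma_{g,h}\}$ forces $m(u,su)$ to depend only on the label $s\in S$ and not on $u\in G$; call the common value $T_{s}$. A direct count shows $\sum_{s\in S}T_{s}$ equals the total length $\sum_{y\in G}\ell(y)$ of all canonical paths, hence at most $|G|\diam(\G)$, so in particular $T_{s}\leq|G|\diam(\G)$ for every $s$. Substituting this uniform bound recovers the required Poincar\'{e}-type inequality, and hence the spectral-gap lower bound $(\diam(\G)^{2}|S|)^{-1}$. The only subtle point is the verification of the translation invariance $m(u,su)=T_{s}$, which follows immediately from the explicit formula $\gamma_{g,h}=g\cdot\gamma(hg^{-1})$; everything else is routine Cauchy--Schwarz and counting.
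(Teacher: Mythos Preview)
Your argument is correct. The paper itself does not supply a proof of this proposition; it simply quotes the result from \cite{DSC93}, so there is no in-paper argument to compare against. What you have written is essentially the canonical-path/Poincar\'e-inequality proof that \cite{DSC93} gives: the Dirichlet-form identity, the variational characterization of $\lambda_1-\lambda_2$, the telescoping-plus-Cauchy--Schwarz bound along geodesic paths, and the edge-loading estimate are exactly the ingredients of Diaconis--Saloff-Coste's Corollary~3.1 specialized to a Cayley graph. One minor notational quibble: you write $\gamma_{g,h}=g\cdot\gamma(hg^{-1})$ but call this ``right-translating''; with the paper's convention that edges are $x\sim sx$ (left multiplication by generators), the translation that preserves edge labels is right multiplication, so the intended formula is $\gamma_{g,h}=\gamma(hg^{-1})\cdot g$. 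With that reading your invariance claim $m(u,su)=T_s$ and the bound $T_s\le\sum_{s'}T_{s'}=\sum_{y}\ell(y)\le |G|\diam(\G)$ go through verbatim.
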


In turn, one can bound the strong mixing time by the spectral gap.

\begin{prop}[(\cite{L96}, Theorem 5.1)]
Given a finite group $G$ and a symmetric generating set $S$, let $\G$ be the Cayley graph, and let $\lambda$ be the spectral gap. Then the strong mixing time of the Cayley graph is bounded by $O(\frac{\log |\G|}{\lambda})$.
\end{prop}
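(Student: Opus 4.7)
The plan is to diagonalise the lazy-walk transition operator and use a standard spectral decomposition together with a Cauchy--Schwarz / diagonal-return trick to turn the spectral gap into a pointwise $\ell^\infty$ bound on the walk density.

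Let $P$ denote the transition operator of the walk driven by $\mu$. Because $S$ is symmetric and $\mu=\tfrac12\mathbf{1}_{\{e\}}+\tfrac{1}{2|S|}\mathbf{1}_S$, one has $P=\tfrac12(I+A)$, where $A$ is the normalised adjacency operator of $\G(G,S)$. Hence $P$ is a symmetric stochastic matrix on $G$ whose eigenvalues are $\mu_i=(1+\lambda_i)/2\in[0,1]$; the laziness is precisely what buys the non-negativity of every $\mu_i$, a property I will need in a moment. Fix an orthonormal eigenbasis $\phi_1,\ldots,\phi_{|G|}$ for $P$, with $\phi_1\equiv|G|^{-1/2}$ corresponding to the eigenvalue $1$, and note that the spectral-gap hypothesis reads $\mu_2\leq 1-\lambda/2$.

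Expanding the transition kernel in this basis gives
\begin{equation*}
P^k(e,g) - \tfrac{1}{|G|} \;=\; \sum_{i\geq 2}\mu_i^k\,\phi_i(e)\,\phi_i(g),
\end{equation*}
and since every $\mu_i^k\geq 0$, Cauchy--Schwarz yields
\begin{equation*}
\Bigl|P^k(e,g)-\tfrac{1}{|G|}\Bigr|^2 \;\leq\; \Bigl(\sum_{i\geq 2}\mu_i^k\phi_i(e)^2\Bigr)\Bigl(\sum_{i\geq 2}\mu_i^k\phi_i(g)^2\Bigr)\;\leq\;\mu_2^{2k},
\end{equation*}
where each factor equals $P^k(z,z)-1/|G|$ (for $z=e,g$) and is bounded above by $\mu_2^{k}\sum_i\phi_i(z)^2=\mu_2^{k}$ via the Parseval identity for $\delta_z$. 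Consequently $|\mu^{(k)}(g)-1/|G|| \leq \mu_2^{k}\leq(1-\lambda/2)^k\leq e^{-\lambda k/2}$ uniformly in $g\in G$.

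Demanding the right-hand side fall below $1/(2|G|)$ forces $k\geq (2/\lambda)\log(2|G|)=O(\log|G|/\lambda)$, which is the claimed strong-mixing-time bound. The only conceptually delicate ingredient is the Cauchy--Schwarz passage from the off-diagonal quantity $P^k(e,g)-1/|G|$ to the diagonal quantities $P^k(z,z)-1/|G|$, and it hinges on the non-negativity of the $\mu_i^k$: without laziness, eigenvalues near $-1$ would have to be handled by working with $P^{2k}$ instead of $P^k$, at the cost of only an inconsequential constant factor in the final bound.
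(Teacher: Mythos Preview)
The paper does not supply its own proof of this proposition: it is quoted verbatim as a known fact, with a citation to \cite{L96}, Theorem~5.1, and no argument is given in the text. So there is nothing in the paper to compare your proof against line by line.

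That said, your argument is correct and is essentially the classical spectral proof one finds in the reference. The only point worth tightening is cosmetic: in the Cauchy--Schwarz step you write the bound as $\left(\sum_{i\geq 2}\mu_i^k\phi_i(e)^2\right)\left(\sum_{i\geq 2}\mu_i^k\phi_i(g)^2\right)$, which is exactly right, but it may help the reader to make explicit that you are splitting $\mu_i^k=\mu_i^{k/2}\cdot\mu_i^{k/2}$ and that this splitting into real factors is where non-negativity of the $\mu_i$ is actually used (for odd $k$). You do mention this at the end, so the logic is all there.
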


Then our main result implies the following corollary:

\begin{cor}
Let $G$ be a finite simple group of Lie type of rank $n$ over $\F_q$. The spectral gap of $\G(G,S)$ is bounded by $|S|^{-1}q^{-O(n(\log n+\log q)^3)}$, and the mixing time of $\G(G,S)$ is bounded by $|S|q^{O(n(\log n+\log q)^3)}$.
\end{cor}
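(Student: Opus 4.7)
The plan is to chain the two cited black-box bounds (from \cite{DSC93} and \cite{L96}) with the diameter bound corollary that precedes this statement, so essentially no new mathematics is required, only a careful bookkeeping of exponents. First I would invoke the preceding corollary, which asserts that for any symmetric generating set $S$ of a finite simple group of Lie type of rank $n$ over $\F_q$, one has $\diam(\G(G,S)) \leq q^{O(n(\log n + \log q)^3)}$. I would then feed this directly into the proposition attributed to \cite{DSC93}, which yields a spectral gap $\lambda_1 - \lambda_2 \geq \frac{1}{(\diam \G)^2 |S|}$. Substituting the diameter bound and absorbing the factor of $2$ in the exponent into the $O(\cdot)$ gives $\lambda_1 - \lambda_2 \geq |S|^{-1} q^{-O(n(\log n + \log q)^3)}$, which is the first claim.

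For the mixing time claim, I would apply the proposition from \cite{L96}, which gives strong mixing time bounded by $O(\log |G|/\lambda)$, where $\lambda$ is the spectral gap just derived. Here the only mildly nontrivial step is observing that $\log |G|$ is itself small compared to the exponential factor from $\lambda^{-1}$: since $G$ is a classical group of rank $n$ over $\F_q$, one has $|G| \leq q^{O(n^2)}$, so $\log |G| = O(n^2 \log q)$. This polynomial factor is absorbed without loss into $q^{O(n(\log n+\log q)^3)}$, leaving a bound $|S| \cdot q^{O(n(\log n+\log q)^3)}$ for the strong mixing time, as claimed.

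I do not anticipate any real obstacle. The only place where one has to be slightly careful is ensuring that the diameter bound corollary is genuinely valid across all four families identified earlier (linear, symplectic, unitary, orthogonal), which is handled by combining Proposition~\ref{Prop:Bdddeg} with Proposition~\ref{Prop:BdddegOSU} through Proposition~\ref{Prop:Conjfill}, exactly as in the proof of the preceding corollary. A small, purely cosmetic point is to verify that the $O(\cdot)$ constants implicit in the spectral gap inequality and the mixing-time inequality are absolute and do not secretly depend on $|S|$, $n$ or $q$; inspecting the statements cited from \cite{DSC93} and \cite{L96} this is clear. Thus the entire proof amounts to a two-line chaining of inequalities.
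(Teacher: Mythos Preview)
Your proposal is correct and matches the paper's approach: the paper states this corollary immediately after the two black-box propositions from \cite{DSC93} and \cite{L96} with no proof, treating it as an immediate chaining of those inequalities with the diameter bound. Your bookkeeping of the absorption of $\log|G|=O(n^2\log q)$ into the $q^{O(n(\log n+\log q)^3)}$ factor is exactly the implicit reasoning the paper leaves to the reader.
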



\printbibliography
\end{document}